\tikzstyle{vertex}=[circle, draw, inner sep=2pt, fill=white]
\newcommand{\E}{{\mathbb E}}
\newcommand{\F}{{\mathbb F}}
\renewcommand{\P}{{\mathbb P}}
\newcommand{\Q}{{\mathbb Q}}
\newcommand{\R}{{\mathbb R}}
\newcommand{\N}{{\mathbb N}}
\newcommand{\Ecal}{{\mathcal E}}
\newcommand{\Fcal}{{\mathcal F}}
\newcommand{\Gcal}{{\mathcal G}}
\newcommand{\Lcal}{{\mathcal L}}
\newcommand{\Scal}{{\mathcal S}}
\newcommand{\Xcal}{{\mathcal X}}
\newcommand{\Mid}{{\ \Big|\ }}
\newcommand{\fdot}{{\,\cdot\,}}
\DeclareMathOperator{\conv}{conv}
\DeclareMathOperator{\cconv}{\overline{conv}}
\DeclareMathOperator{\aff}{aff}
\DeclareMathOperator{\caff}{\overline{aff}}
\newtheorem{theorem}{Theorem}
\newtheorem{corollary}[theorem]{Corollary}
\newtheorem{definition}[theorem]{Definition}
\newtheorem{example}[theorem]{Example}
\newtheorem{lemma}[theorem]{Lemma}
\newtheorem{proposition}[theorem]{Proposition}
\newtheorem{remark}[theorem]{Remark}
\numberwithin{equation}{section}
\numberwithin{theorem}{section}
\begin{document}

\title{Conditional infimum and recovery of monotone processes}
\author{Martin Larsson\thanks{Department of Mathematics, ETH Zurich, R\"amistrasse 101, CH-8092, Zurich, Switzerland, martin.larsson@math.ethz.ch.}\ \thanks{ The author would like to thank Nicole El Karoui, Pietro Siorpaes, and Josef Teichmann for useful comments and fruitful discussions. Financial support by the Swiss National Science Foundation (SNF) under grant 205121\textunderscore163425 is gratefully acknowledged.}}

\maketitle

\begin{abstract}
Monotone processes, just like martingales, can often be recovered from their final values. Examples include running maxima of supermartingales, as well as running maxima, local times, and various integral functionals of sticky processes such as fractional Brownian motion. An interesting corollary is that any positive local martingale can be reconstructed from its final value and its global maximum. These results rely on the notion of conditional infimum, which is developed for a large class of complete lattices. The framework is sufficiently general to handle also more exotic examples, such as the process of convex hulls of certain multidimensional processes, and the process of sites visited by a random walk.
\\[2ex] 
\noindent{\textbf {Keywords:} Conditional infimum, complete lattice, sticky processes, max-martingale, maxingale.}
\\[2ex]
\noindent{\textbf {MSC2010 classifications:} 60G48, 60G20, 06B23.}
\end{abstract}

\section{Introduction}

Let $M=(M_t)_{t=0,\ldots,T}$ be a discrete time martingale defined on a filtered probability space $(\Omega,\Fcal,\{\Fcal_t\}_{t=0,\ldots,T},\P)$, where we suppose $\Omega$ is finite. Let $\overline M_t=\max_{s\le t}M_s$ be the running maximum. Just as the martingale $M$ can be recovered from its final value $M_T$ via the formula $M_t=\E[M_T\mid\Fcal_t]$, the running maximum process $\overline M$ can be recovered from its final value $\overline M_T$. In fact, for any $t\in\{0,\ldots,T\}$ and non-null $\omega\in\Omega$, we claim that
\begin{equation} \label{intro1}
\overline M_t(\omega)=\min_{\omega'\in A}\overline M_T(\omega'),
\end{equation}
where $A$ is the atom of $\Fcal_t$ containing $\omega$. To see this, note that $\overline M_t(\omega)\le \min_{\omega'\in A}\overline M_T(\omega')$ since $\overline M$ is nondecreasing. If the inequality were strict, we would have $M_t\le \overline M_t<\overline M_T$ on the $\Fcal_t$-measurable event $A$, contradicting the martingale property: on $A$, $M$ would be sure to experience a strict increase between $t$ and $T$. Thus \eqref{intro1} must hold.

The right-hand side of the identity \eqref{intro1} is the {\em conditional infimum of $\overline M_T$ given $\Fcal_t$}, evaluated at~$\omega$, and the identity itself expresses an ``inf-martingale'' property of $\overline M$. The goal of the present paper is to develop these ideas in some generality. For a large class of complete lattices $S$, we show that the conditional infimum of an $S$-valued random element $X$ given a sub-$\sigma$-algebra $\Ecal$ is well-defined; we denote it by $\bigwedge[X\mid\Ecal]$. In the presence of a filtration one is led to consider ``inf-martingales'' $\bigwedge[X\mid\Fcal_t]$, $t\ge0$, and a key message of this paper is that many naturally occurring nondecreasing processes turn out to have this property. They can then be recovered from their final value. Examples include running maxima of supermartingales and, more generally, of processes that become supermartingales after an equivalent change of measure (Proposition~\ref{P:maxM}). Running maxima, local times, and various integral functionals of so-called sticky processes also have this property (Propositions~\ref{P:f(X) run max}, \ref{P:X U K sticky}, \ref{P:int f sticky}, and their corollaries). More exotic examples include the process of convex hulls of certain multidimensional processes, and the process of sites visited by a random walk (Propositions~\ref{P:convex hull sticky} and~\ref{P:RV sticky}). These results are derived from a simple ``no-arbitrage'' principle for nondecreasing lattice-valued processes (Theorem~\ref{T:NCR}). In the martingale context, an interesting corollary is that any positive local martingale can be recovered from its final value and its global maximum (Proposition~\ref{P:maxM X}).

The general theory covers a rather broad class of measurable complete lattices $S$. One only needs measurability of the ``triangle'' $\{(x,y)\colon x\le y\}$ in the product space $S\times S$, measurability of the countable supremum and infimum maps, and existence of a strictly increasing measurable map $S\to\R$. These hypotheses are stated precisely in \ref{A1}--\ref{A3} below. Apart from the extended real line $[-\infty,\infty]$, we prove that this covers the family of closed convex subsets of $\R^d$, as well as the family $2^{\Xcal}$ of subsets of a countable set $\Xcal$ (Theorems~\ref{T:convex sets} and~\ref{T:countable set}, respectively).

Conditional infima (and suprema) for real-valued random variables have appeared previously in the literature, along with real-valued ``inf-martingales'' (or ``sup-martingales'', also called max-martingales or maxingales); see for instance \cite{bar_car_jen_03,elk_mez_08}. We extend these constructions to general complete lattices with the additional structural properties mentioned above. A related but different notion of maxingale has been used by \cite{puh_97,puh_99,puh_01} and \cite{fle_04} in the context of idempotent probability with applications to large deviations theory and control theory. The notion of stickiness, which is closely related to the developments in the present paper, plays an important role in mathematical finance; see e.g.~\citet{GRS:08,BPS:15,RS:16}. Conditional infima in lattices of sets have also been useful in problems from multidimensional martingale optimal transport; see \cite{OS:17}, who make use of our Example~\ref{E:CI 2} below.

The rest of the paper is organized as follows. After ending this introduction with some remarks on notation, we turn to Section~\ref{S:CI} where the general theory of conditional infima in complete lattices is developed, including analogues of the martingale regularization and optional stopping theorems. Section~\ref{S:sticky} discusses sticky processes and their relations to conditional infima. Applications to martingale theory are given in Section~\ref{S:martingales}, including a general version of \eqref{intro1}. Examples involving processes of convex hulls and processes of subsets of a countable set a given in Section~\ref{S:further}. Section~\ref{S:closed subsets} develops several general results, mainly of measure theoretic nature, for lattices of closed sets. These results should be of independent interest.

\subsection{Remarks on notation}

Throughout this paper, $(\Omega,\Fcal,\P)$ is a probability space. Relations between random quantities are understood in the almost sure sense, unless stated otherwise. The probability space is endowed with a filtration $\F=\{\Fcal_t\}_{t\ge 0}$ of sub-$\sigma$-algebras of $\Fcal$, and we set $\Fcal_\infty=\bigvee_{t\ge0}\Fcal_t$. The filtration $\F$ need not be augmented with the $\P$-nullsets, but unless stated otherwise it is assumed throughout the paper that $\F$ is right-continuous. It is sometimes convenient to work with the order-theoretic indicator $\chi_A$ of a subset $A\subseteq\Omega$, defined by
\[
\chi_A(\omega) = \begin{cases} -\infty, & \omega\in A\\ +\infty, & \omega\notin A. \end{cases}
\]
The meaning of the symbols $+\infty$ and $-\infty$ are discussed below.

\section{Conditional infimum} \label{S:CI}

Throughout this section, let $(S,\le)$ be a complete lattice. That is, $S$ is a partially ordered set such that any subset $A\subseteq S$ has a least upper bound, denoted by $\sup A$. This implies that the greatest lower bound $\inf A$ also exists, and that $S$ contains a greatest element $+\infty$ and smallest element $-\infty$. We write $x\vee y$ for $\sup\{x,y\}$ and $x\wedge y$ for $\inf\{x,y\}$, and use $x<y$ as shorthand for $x\le y$ and $x\ne y$.

We assume that $S$ is equipped with a $\sigma$-algebra $\Scal$ that satisfies the following two properties:
\begin{enumerate}[label={\rm(A\arabic*)}]
\item\label{A1} The set $\{(x,y)\in S^2\colon x\le y\}$ lies in the product $\sigma$-algebra $\Scal^2=\Scal\otimes\Scal$.
\item\label{A2} The countable supremum and infimum maps
\[
(x_1,x_2,\ldots) \mapsto \sup\{x_1,x_2,\ldots\} \qquad\text{and}\qquad
(x_1,x_2,\ldots) \mapsto \inf\{x_1,x_2,\ldots\}
\]
are measurable, where the set of sequences $S^\infty=\{(x_1,x_2,\ldots)\colon x_n\in S \text{ for all $n$}\}$ is equipped with the product $\sigma$-algebra $\Scal^\infty=\bigotimes_{n=1}^\infty\Scal$.
\end{enumerate}
These properties ensure that random elements of~$S$ (i.e., measurable maps $\Omega\to S$) behave well. Indeed, let $X_n$, $n\in\N$, be random elements of $S$. Assumption~\ref{A1} implies that $\{X_1\le X_2\}\in\Fcal$, and hence also $\{X_1<X_2\}\in\Fcal$.\footnote{Indeed, $\{X_1<X_2\}$ equals $\{X_1\le X_2\}\setminus(\{X_1\le X_2\}\cap\{X_2\le X_1\})$ and is therefore measurable.} Assumption~\ref{A2} implies that $\sup_n X_n$ and $\inf_n X_n$ are again random elements of $S$. This will be used repeatedly in what follows.

Finally, we make the following assumption, where, of course, strictly increasing means that $x<y$ implies $\phi(x)<\phi(y)$:
\begin{enumerate}[label={\rm(A\arabic*)},resume]
\item\label{A3} There exists a strictly increasing measurable map $\phi\colon S\to\R$.
\end{enumerate}

\begin{remark}
In some cases, naturally appearing lattices are not complete, but only {\em Dedekind complete}: suprema (infima) are guaranteed to exist only for subsets $A\subseteq S$ that are bounded above (below). In such cases one can extend the given lattice to a complete lattice satisfying \ref{A1}--\ref{A3}, provided these properties hold in the given lattice; see Proposition~\ref{P:ext Dedekind}.
\end{remark}

There are plenty of examples of complete lattices which satisfy \ref{A1}--\ref{A3}, some of which are discussed below.  The first example below concerns the familiar (extended) real-valued case, while the subsequent examples involve more complicated complete lattices.

\begin{example} \label{E:CI 1}
$\overline\R=[-\infty,\infty]$ together with the usual order and the Borel $\sigma$-algebra is a complete lattice which clearly satisfies \ref{A1}--\ref{A3}.
\end{example}

\begin{example} \label{E:CI 2_new}
Let $\Xcal$ be a countable set, and let $S=2^\Xcal$ be the collection of all subsets of~$\Xcal$ partially ordered by set inclusion. Supremum is set union, $-\infty=\emptyset$, and $+\infty=\Xcal$. With these operations $S$ is a complete lattice, and it admits a $\sigma$-algebra $\Scal$ such that \ref{A1}--\ref{A3} are satisfied; see Theorem~\ref{T:countable set}.
\end{example}

\begin{example} \label{E:CI 2}
Let $S={\rm CO}(\R^d)$ be the collection of all closed convex subsets $C\subseteq\R^d$ partially ordered by set inclusion. For a subset $A\subseteq S$ one has $\sup A=\overline\conv (\bigcup_{C\in A}C)$, the closed convex hull of the union of all $C\in A$, and $\inf A=\bigcap_{C\in A}C$. Moreover, $-\infty=\emptyset$ and $+\infty=\R^d$. With these operations $S$ is a complete lattice, and it admits a $\sigma$-algebra $\Scal$ such that \ref{A1}--\ref{A3} are satisfied; see Theorem~\ref{T:convex sets}.
\end{example}

The following lemma is a consequence of the existence of a strictly increasing measurable real-valued map. We will use it to define the conditional infimum.

\begin{lemma} \label{L:esssup}
Let $\Lcal$ be a set of random elements of $S$ closed under countable suprema. Then $\Lcal$ contains a maximal element. That is, there exists $X^*\in\Lcal$ such that $X\le X^*$ almost surely for every $X\in\Lcal$. The maximal element $X^*$ is unique up to almost sure equivalence.
\end{lemma}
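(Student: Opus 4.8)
The plan is to reduce the lattice-valued statement to the classical essential-supremum argument for real-valued random variables via the strictly increasing map $\phi$ from \ref{A3}. First I would consider the set $\Lcal_\phi=\{\phi\circ X\colon X\in\Lcal\}$ of real-valued (more precisely, $\R$-valued, hence bounded in $\overline\R$) random variables. Since $\phi$ is measurable, each $\phi\circ X$ is a genuine random variable, and the classical essential supremum $\xi=\esssup_{X\in\Lcal}\phi\circ X$ exists as an $\overline\R$-valued random variable, with the standard property that there is a countable subfamily $\{X_n\}_{n\ge1}\subseteq\Lcal$ with $\xi=\sup_n\phi\circ X_n$ almost surely. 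The key structural input is then Assumption~\ref{A2}: because $\Lcal$ is closed under countable suprema, the element $X^*:=\sup_n X_n$ is itself a random element of $S$ lying in $\Lcal$.

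The main step is to verify that this $X^*$ is maximal, i.e. $X\le X^*$ a.s. for every $X\in\Lcal$. Here is where I expect the one genuine subtlety. Fix $X\in\Lcal$. Since $\phi$ is strictly increasing, monotonicity gives $\phi(X\vee X^*)\ge\phi(X^*)$ pointwise, and by closure under countable suprema $X\vee X^*\in\Lcal$, so $\phi(X\vee X^*)\le\xi=\phi(X^*)$ a.s.; hence $\phi(X\vee X^*)=\phi(X^*)$ a.s. Now I invoke strict monotonicity of $\phi$ in its contrapositive form: if $X^*<X\vee X^*$ on a set of positive probability, then $\phi(X^*)<\phi(X\vee X^*)$ there, a contradiction. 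Therefore $X\vee X^*=X^*$ a.s., which is exactly $X\le X^*$ a.s. The only thing to be careful about is measurability of the events involved — $\{X^*<X\vee X^*\}\in\Fcal$ by Assumption~\ref{A1} (as noted in the footnote of the excerpt), so the phrase "positive probability" is meaningful. This measurability check, trivial as it is, is really the crux: it is what makes the strictly increasing map usable, since without \ref{A1} one could not even speak of the null set on which the order relation fails.

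For uniqueness, suppose $X^*$ and $Y^*$ are both maximal elements of $\Lcal$. Then $X^*\le Y^*$ a.s. and $Y^*\le X^*$ a.s., and since $S$ is a partial order these combine (pointwise off a null set) to $X^*=Y^*$ a.s. Again \ref{A1} ensures the two exceptional sets are measurable, so their union is null. This completes the argument.

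I do not anticipate any serious obstacle: the proof is a faithful transcription of the real-valued $\esssup$ construction, with \ref{A3} providing the scalarization that lets the classical argument run and \ref{A1}--\ref{A2} providing the measurability and closure needed to pull the resulting countable supremum back into $S$. The one place demanding a moment's thought is the passage from $\phi(X^*)=\phi(X\vee X^*)$ a.s. back to $X^*=X\vee X^*$ a.s., which relies essentially on $\phi$ being \emph{strictly} increasing rather than merely monotone.
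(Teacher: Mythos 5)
Your proof is correct and follows essentially the same route as the paper: scalarize through the strictly increasing map $\phi$ from \ref{A3}, use closure under countable suprema (together with \ref{A2}) to bring the countable representing family back into $\Lcal$ as $X^*=\sup_n X_n$, and use strict monotonicity of $\phi$ plus \ref{A1} to force $X\vee X^*=X^*$ almost surely. The only cosmetic difference is that you delegate the countable-selection step to the classical real-valued essential supremum theorem, whereas the paper runs that argument directly via a maximizing sequence for $\sup_{X\in\Lcal}\E[\phi(X)]$ and concludes from a strict inequality of expectations.
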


\begin{proof}
The uniqueness statement is obvious since any other maximal element $X^{**}\in\Lcal$ satisfies $X^*\le X^{**}\le X^*$ almost surely. To prove existence, let $\phi\colon S\to\R$ be a strictly increasing measurable map, without loss of generality taken to be bounded, and define
\[
\alpha = \sup\{ \E[\phi(X)] \colon X\in\Lcal\}.
\]
Let $(X_n)_{n\in\N}$ be a maximizing sequence and define $X^*=\sup_n X_n \in\Lcal$. Then
\[
\alpha\ge\E[\phi(X^*)]\ge\E[\phi(X_n)] \to \alpha,
\]
so $\E[\phi(X^*)]=\alpha$. Consider any $X\in\Lcal$ and assume for contradiction that $\P(X\not\le X^*)>0$. Then the random element $Y=X^*\vee X \in \Lcal$ satisfies $X^*\le Y$ and $\P(X^*<Y)>0$. Therefore, since $\phi$ is strictly increasing,
\[
\alpha \ge \E[\phi(Y)] > \E[\phi(X^*)] = \alpha,
\]
a contradiction. Thus $X\le X^*$ almost surely, as desired.
\end{proof}

Although it will not be used in this paper, let us mention that Lemma~\ref{L:esssup} implies the existence of essential suprema. Given a set $\Lcal$ of random elements of $S$, a random element $X^*$ is the {\em essential supremum of $\Lcal$} (necessarily a.s.~unique) if $X^*$ a.s.~dominates $\Lcal$ and satisfies $X^*\le Y$ a.s.~for any random element $Y$ that also a.s.~dominates $\Lcal$.

\begin{corollary}
Let $\Lcal$ be any set of random elements of $S$. Then $\Lcal$ admits an essential supremum $X^*$, which can be expressed as the supremum of countably many elements of $\Lcal$.
\end{corollary}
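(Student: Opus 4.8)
The plan is to reduce to Lemma~\ref{L:esssup} by first enlarging $\Lcal$ to a set that is closed under countable suprema. Define $\Lcal^\vee$ to be the collection of all random elements of $S$ of the form $\sup_n X_n$, where $(X_n)_{n\in\N}$ ranges over all sequences in $\Lcal$. By Assumption~\ref{A2}, each such supremum is again a random element of $S$, so $\Lcal^\vee$ is a well-defined set of random elements of $S$, and $\Lcal\subseteq\Lcal^\vee$ (use constant sequences). If $\Lcal=\emptyset$ the statement is trivial with $X^*=-\infty$, so we assume $\Lcal\ne\emptyset$.

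The key point is that $\Lcal^\vee$ is closed under countable suprema. Given a sequence $(Y_k)_{k\in\N}$ in $\Lcal^\vee$, write each $Y_k=\sup_n X_{k,n}$ with $X_{k,n}\in\Lcal$. Using the elementary identity $\sup_k\sup_n x_{k,n}=\sup_{(k,n)\in\N\times\N}x_{k,n}$, valid in any complete lattice, we get $\sup_k Y_k=\sup_{(k,n)}X_{k,n}$. Since $\N\times\N$ is countable, this is again a countable supremum of elements of $\Lcal$, hence lies in $\Lcal^\vee$. Thus Lemma~\ref{L:esssup} applies to $\Lcal^\vee$ and produces an a.s.\ unique $X^*\in\Lcal^\vee$ with $Y\le X^*$ a.s.\ for every $Y\in\Lcal^\vee$.

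It remains to verify that $X^*$ has the required properties. Since $\Lcal\subseteq\Lcal^\vee$, we have $X\le X^*$ a.s.\ for every $X\in\Lcal$, so $X^*$ a.s.\ dominates $\Lcal$. Because $X^*\in\Lcal^\vee$, it can be written as $X^*=\sup_n X_n$ for some sequence $(X_n)_{n\in\N}$ in $\Lcal$, which is the asserted representation. Finally, if $Y$ is any random element of $S$ that a.s.\ dominates $\Lcal$, then $Y\ge X_n$ a.s.\ for every $n$, and hence $Y\ge\sup_n X_n=X^*$ a.s.; this shows $X^*$ is the essential supremum of $\Lcal$. I do not expect a genuine obstacle here; the only point requiring care is the diagonal reindexing in the second paragraph, which relies on the fact that a countable union of countable sets is countable so that $X^*$ indeed involves only countably many elements of $\Lcal$.
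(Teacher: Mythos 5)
Your proof is correct and follows essentially the same route as the paper: enlarge $\Lcal$ to the set of all countable suprema of its elements, note this enlargement is closed under countable suprema, apply Lemma~\ref{L:esssup} to obtain the maximal element, and observe that any dominator of $\Lcal$ also dominates the enlargement. You simply spell out the diagonal reindexing and the final verifications in more detail than the paper does.
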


\begin{proof}
Let $\overline\Lcal$ be the set of all countable suprema $\sup_nX_n$ of elements $X_n\in\Lcal$. This set is closed under countable suprema, and thus admits a maximal element $X^*$ by Lemma~\ref{L:esssup}. Moreover, if $Y$ dominates $\Lcal$, it also dominates $\overline\Lcal$, whence $X^*\le Y$. Finally, being an element of $\overline\Lcal$, $X^*$ is the supremum of countably many elements of $\Lcal$.
\end{proof}

The following definition introduces the key object of interest in this paper, the conditional infimum. Lemma~\ref{L:esssup} implies that the the conditional infimum always exists and is unique up to almost sure equivalence.

\begin{definition} \label{D:cond inf} 
Let $X$ be a random element of $S$, and let $\Ecal\subseteq\Fcal$ be a sub-$\sigma$-algebra. The {\em conditional infimum of $X$ given $\Ecal$}, denoted by $\bigwedge[X\mid\Ecal]$, is the maximal element of
\[
\{Z:\Omega\to S \text{ such that $Z$ is $\Ecal$-measurable and $Z\le X$ almost surely} \}.
\]
That is, $\bigwedge[X\mid\Ecal]$ is the greatest $\Ecal$-measurable lower bound on $X$.
\end{definition}

The following lemma collects some basic properties of the conditional infimum, which are immediate consequences of the definition. These properties are well-known in the literature, at least in the case $S=\overline\R$; see \cite{bar_car_jen_03}.

\begin{lemma}[Properties of the conditional infimum] \label{L:PCI}
Let $X$ and $Y$ be random elements of~$S$, and let $\Ecal$ and $\Gcal$ be sub-$\sigma$-algebras of $\Fcal$. Then the following properties hold:
\begin{enumerate}
\item\label{PCI:mon I}If $\Ecal\subseteq\Gcal$ then $\bigwedge[X\mid\Ecal]\le\bigwedge[X\mid\Gcal]$.
\item\label{PCI:mon II}If $X\le Y$ then $\bigwedge[X\mid\Ecal]\le\bigwedge[Y\mid\Ecal]$.
\item\label{PCI:tower}If $\Ecal\subseteq\Gcal$, then $\bigwedge[\,\bigwedge[X\mid\Gcal]\mid\Ecal] = \bigwedge[X\mid\Ecal]$.
\item\label{PCI:cont I}Let $\{\Ecal_n\}_{n\in\N}$ be a non-increasing sequence of sub-$\sigma$-algebras and suppose $\Ecal = \bigcap_{n\in\N}\Ecal_n$. Then $\bigwedge[X\mid\Ecal] = \inf_{n\in\N} \bigwedge[X\mid\Ecal_n]$.
\item\label{PCI:cont II} Let $\{X_n\}_{n\in\N}$ be a sequence of random elements of~$S$. Then $\bigwedge[\inf_{n\in\N}X_n\mid\Ecal] = \inf_{n\in\N} \bigwedge[X_n\mid\Ecal]$.
\item\label{PCI:max linear} If $Y$ is $\Ecal$-measurable and $\le$ is a total order, then $\bigwedge[X\vee Y\mid\Ecal] = \bigwedge[X\mid\Ecal] \vee Y$.
\end{enumerate}
\end{lemma}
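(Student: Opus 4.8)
The plan is to derive each of the six properties directly from the characterization in Definition~\ref{D:cond inf}: $\bigwedge[X\mid\Ecal]$ is the greatest $\Ecal$-measurable lower bound of $X$, with existence and a.s.-uniqueness guaranteed by Lemma~\ref{L:esssup}. In each case the strategy is the same two-step pattern: first verify that the proposed expression is an $\Ecal$-measurable lower bound of the relevant random element (so it is $\le$ the claimed conditional infimum by maximality), then verify the reverse inequality by exhibiting the claimed conditional infimum as a competitor in the defining set of the proposed expression. For (i), note that any $\Ecal$-measurable lower bound of $X$ is also a $\Gcal$-measurable lower bound, so the defining set for $\bigwedge[X\mid\Ecal]$ is contained in that for $\bigwedge[X\mid\Gcal]$, whence the inequality. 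For (ii), $\bigwedge[X\mid\Ecal]$ is an $\Ecal$-measurable lower bound of $X\le Y$, hence of $Y$, so it is dominated by $\bigwedge[Y\mid\Ecal]$ by maximality.

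For (iii), write $Z=\bigwedge[X\mid\Gcal]$. Then $\bigwedge[Z\mid\Ecal]$ is $\Ecal$-measurable and satisfies $\bigwedge[Z\mid\Ecal]\le Z\le X$, so it is an $\Ecal$-measurable lower bound of $X$ and thus $\le\bigwedge[X\mid\Ecal]$. Conversely, $\bigwedge[X\mid\Ecal]$ is $\Ecal$-measurable, hence $\Gcal$-measurable, and is a lower bound of $X$, so $\bigwedge[X\mid\Ecal]\le\bigwedge[X\mid\Gcal]=Z$; being $\Ecal$-measurable it is therefore an $\Ecal$-measurable lower bound of $Z$, so $\bigwedge[X\mid\Ecal]\le\bigwedge[Z\mid\Ecal]$. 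For (v), set $Y_n=\bigwedge[X_n\mid\Ecal]$ and $W=\inf_n Y_n$; by Assumption~\ref{A2}, $W$ is again an $\Ecal$-measurable random element of $S$, and since $W\le Y_n\le X_n$ for every $n$ we get $W\le\inf_n X_n$, so $W\le\bigwedge[\inf_n X_n\mid\Ecal]$. For the reverse, $\bigwedge[\inf_n X_n\mid\Ecal]\le\inf_n X_n\le X_n$ is an $\Ecal$-measurable lower bound of each $X_n$, hence $\le Y_n$ for every $n$, hence $\le W$.

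For (iv), with $\{\Ecal_n\}$ non-increasing and $\Ecal=\bigcap_n\Ecal_n$, monotonicity (i) gives $\bigwedge[X\mid\Ecal]\le\bigwedge[X\mid\Ecal_n]$ for each $n$, so $\bigwedge[X\mid\Ecal]\le\inf_n\bigwedge[X\mid\Ecal_n]=:W$. The reverse direction is the one subtle point: one must argue that $W$ is $\Ecal$-measurable. Here I would use that the sequence $\bigwedge[X\mid\Ecal_n]$ is non-increasing in $n$ (again by (i)), so $W=\inf_n\bigwedge[X\mid\Ecal_n]=\inf_{n\ge m}\bigwedge[X\mid\Ecal_n]$ is, for each fixed $m$, a countable infimum of $\Ecal_m$-measurable random elements, hence $\Ecal_m$-measurable by Assumption~\ref{A2}; since this holds for all $m$, $W$ is $\bigcap_m\Ecal_m=\Ecal$-measurable. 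As $W\le X$ as well, $W$ is an $\Ecal$-measurable lower bound of $X$, so $W\le\bigwedge[X\mid\Ecal]$, completing the equality. Finally, for (vi), with $\le$ a total order and $Y$ being $\Ecal$-measurable, the quantity $\bigwedge[X\mid\Ecal]\vee Y$ is $\Ecal$-measurable and, using totality, $\bigwedge[X\mid\Ecal]\vee Y\le X\vee Y$, so it is $\le\bigwedge[X\vee Y\mid\Ecal]$. For the converse I would argue pointwise on the $\Ecal$-measurable partition $\{Y\le X\vee Y... \}$; more precisely, on the event $\{\bigwedge[X\vee Y\mid\Ecal]\le Y\}$ the inequality $\bigwedge[X\vee Y\mid\Ecal]\le\bigwedge[X\mid\Ecal]\vee Y$ is immediate, while on its complement (an $\Ecal$-measurable event by Assumption~\ref{A1}) totality forces $Y<\bigwedge[X\vee Y\mid\Ecal]\le X\vee Y$, which by totality means $Y<X$ there and hence $X\vee Y=X$, so $\bigwedge[X\vee Y\mid\Ecal]\le X$ on that event; combined with $\bigwedge[X\vee Y\mid\Ecal]\le X$ being needed to be turned into an $\Ecal$-measurable lower bound, one patches together an $\Ecal$-measurable lower bound of $X$ that agrees with $\bigwedge[X\vee Y\mid\Ecal]$ on $\{Y<\bigwedge[X\vee Y\mid\Ecal]\}$ to conclude.

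The main obstacle, and the only place where the structural assumptions really bite, is the measurability bookkeeping in (iv) and (vi): in (iv) one needs the monotonicity of the approximating sequence to localize measurability to each $\Ecal_m$, and in (vi) one needs totality of the order together with measurability of the triangle $\{x\le y\}$ (Assumption~\ref{A1}) to split $\Omega$ into the $\Ecal$-measurable pieces on which the gluing argument produces a bona fide $\Ecal$-measurable lower bound of $X$. Everything else is a routine application of the maximality characterization.
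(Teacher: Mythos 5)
Your proposal is correct and follows essentially the same route as the paper for all six parts: maximality arguments for \ref{PCI:mon I}--\ref{PCI:cont II}, the monotone-tail trick $\inf_{n\in\N}\bigwedge[X\mid\Ecal_n]=\inf_{n\ge m}\bigwedge[X\mid\Ecal_n]$ to get $\Ecal$-measurability in \ref{PCI:cont I}, and the split on $\{Y<Z\}$ versus $\{Z\le Y\}$ in \ref{PCI:max linear}. The ``patching'' you describe there is exactly the paper's construction $Z\wedge\chi_{\{Y<Z\}^c}$, which is the concrete $\Ecal$-measurable lower bound of $X$ your argument calls for.
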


\begin{proof}
\ref{PCI:mon I}: $\bigwedge[X\mid\Ecal]$ is $\Gcal$-measurable and dominated by $X$.
\ref{PCI:mon II}: Any lower bound of $X$ is also a lower bound on $Y$.
\ref{PCI:tower}: By monotonicity, $\bigwedge[\,\bigwedge[X\mid\Gcal]\mid\Ecal]\le\bigwedge[X\mid\Ecal]\le X$. Moreover, if $Z\le X$ is $\Ecal$-measurable, then it is also $\Gcal$-measurable, whence $Z\le\bigwedge[X\mid\Gcal]$. Thus $Z=\bigwedge[Z\mid\Ecal]\le \bigwedge[\,\bigwedge[X\mid\Gcal]\mid\Ecal]$.
\ref{PCI:cont I}: Since $\Ecal_n$ is non-increasing in $n$, \ref{PCI:mon I} yields $\inf_{n\in\N} \bigwedge[X\mid\Ecal_n]=\inf_{n\ge m} \bigwedge[X\mid\Ecal_n]$ for each $m$. Thus $\inf_{n\in\N} \bigwedge[X\mid\Ecal_n]$ is $\Ecal$-measurable. Moreover, it dominates any $\Ecal$-measurable~$Z\le X$.
\ref{PCI:cont II}: $\bigwedge[X_n\mid\Ecal]$ is a lower bound on $X_n$, whence $\inf_{n\in\N} \bigwedge[X_n\mid\Ecal]$ is a lower bound on $\inf_{n\in\N}X_n$. Thus $\inf_{n\in\N} \bigwedge[X_n\mid\Ecal]\le \bigwedge[\inf_{n\in\N}X_n\mid\Ecal]$. The reverse inequality follows from~\ref{PCI:mon II}.

\ref{PCI:max linear}: Set $X'=\bigwedge[X\mid\Ecal]$. Then $X'\le X$, hence $X'\vee Y\le X\vee Y$. It remains to pick an arbitrary $\Ecal$-measurable $Z\le X\vee Y$ and show that $Z\le X'\vee Y$. On $\{Y<Z\}$ one has $Y<Z\le X\vee Y$ and hence $X\vee Y=X$. Thus
\[
Z\wedge \chi_{\{Y<Z\}^c} \le (X\vee Y) \wedge \chi_{\{Y<Z\}^c} = X \wedge \chi_{\{Y<Z\}^c} \le X.
\]
The left-hand side is $\Ecal$-measurable, so $Z\wedge \chi_{\{Y<Z\}^c}\le X'$ by definition of $X'$. It follows that $Z\le X'\vee Y$ on $\{Y<Z\}$. Since $\le$ is a total order, $\{Y<Z\}^c=\{Z\le Y\}$, so that $Z\le X'\vee Y$ also on this set. Thus $Z\le X'\vee Y$, as required.
\end{proof}

We now consider $S$-valued stochastic processes $V=(V_t)_{t\ge0}$ adapted to the right-continuous filtration~$\F$. A process $V$ with nondecreasing paths is called {\em right-continuous} if it satisfies
\[
V_t(\omega) = \inf_{s>t} V_s(\omega) \quad\text{for all}\quad (\omega,t)\in\Omega\times\R_+.
\]
This amounts to a slight abuse of terminology, since $S$ need not have any topological structure.

Given a random element $X$, one can consider the family $V=(V_t)_{t\ge0}$ of random variables $V_t=\bigwedge[X\mid\Fcal_t]$. In view of Lemma~\ref{L:PCI}\ref{PCI:mon I}, $V_t$ is non-decreasing in $t$; however, at this stage it is only defined up to a nullset that may depend on $t$. The following result confirms that one can choose a regular version.

\begin{lemma}[Right-continuous version] \label{L:RC version}
Let $X$ be a random element of $S$. Then there exists an adapted nondecreasing right-continuous $S$-valued process $V=(V_t)_{t\ge0}$ such that $V_t=\bigwedge[X\mid\Fcal_t]$ for all $t\ge0$. The process $V$ is unique up to evanescence.
\end{lemma}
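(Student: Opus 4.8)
The plan is to follow the classical martingale regularization argument, using the strictly increasing measurable map $\phi\colon S\to\R$ from \ref{A3} to transfer the problem to the real line, and then to reconstruct an $S$-valued process using the right-continuity convention $V_t(\omega)=\inf_{s>t}V_s(\omega)$. First, fix for each rational $q\ge0$ a version of $\bigwedge[X\mid\Fcal_q]$, and observe via Lemma~\ref{L:PCI}\ref{PCI:mon I} that, off a single nullset $N$, the map $q\mapsto\bigwedge[X\mid\Fcal_q](\omega)$ is nondecreasing along the rationals (only countably many pairs need to be controlled). I would take $\phi$ bounded without loss of generality, as in the proof of Lemma~\ref{L:esssup}. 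Then $q\mapsto \phi(\bigwedge[X\mid\Fcal_q](\omega))$ is a bounded nondecreasing function of a rational variable for $\omega\notin N$, hence has right limits everywhere; but more to the point, since $\phi$ is strictly increasing and the underlying $S$-valued map is already nondecreasing along rationals, the limit of interest is purely order-theoretic: for $t\ge0$ set
\[
V_t(\omega) = \inf_{q>t,\ q\in\Q} \bigwedge[X\mid\Fcal_q](\omega) \quad\text{for }\omega\notin N, \qquad V_t(\omega)=-\infty \text{ for }\omega\in N.
\]
By \ref{A2} this infimum over a countable set is again a random element of $S$, so each $V_t$ is measurable; and $V_t$ is $\bigcap_{q>t}\Fcal_q=\Fcal_{t+}=\Fcal_t$-measurable by right-continuity of $\F$, so $V$ is adapted. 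Monotonicity of $V$ in $t$ is immediate from the nested structure of the index sets.

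Next I would verify the right-continuity identity $V_t=\inf_{s>t}V_s$. The inequality $V_t\ge\inf_{s>t}V_s$ is trivial by monotonicity. For the reverse, given $s>t$ pick a rational $q$ with $t<q<s$; then $V_s\ge \bigwedge[X\mid\Fcal_q]\ge V_t$ is false in the wrong direction, so instead: for any rational $q>t$ choose $s$ with $t<s<q$, so that $V_s\le \bigwedge[X\mid\Fcal_q]$ (since the defining infimum for $V_s$ ranges over rationals exceeding $s$, all of which are $\ge q$ is again wrong) — here I need to be slightly careful and argue directly that $\inf_{s>t}V_s=\inf_{s>t}\inf_{q>s}\bigwedge[X\mid\Fcal_q]=\inf_{q>t}\bigwedge[X\mid\Fcal_q]=V_t$, which is just associativity of infima over the countable rational index set (after throwing away $N$). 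This is the place where the abuse of terminology ``right-continuous'' is honest: it is a genuine order identity, no topology needed.

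The substantive step is to show $V_t=\bigwedge[X\mid\Fcal_t]$ a.s.\ for every real $t$, including irrational ones. Since $V_t$ is $\Fcal_t$-measurable and $V_t\le \bigwedge[X\mid\Fcal_q]\le X$ a.s.\ for any rational $q>t$, we get $V_t\le \bigwedge[X\mid\Fcal_t]$ by maximality in Definition~\ref{D:cond inf}. For the reverse inequality I would invoke Lemma~\ref{L:PCI}\ref{PCI:cont I}: the $\sigma$-algebras $\Fcal_{q_n}$ for any sequence of rationals $q_n\downarrow t$ form a non-increasing family with $\bigcap_n\Fcal_{q_n}=\Fcal_{t+}=\Fcal_t$, whence $\bigwedge[X\mid\Fcal_t]=\inf_n\bigwedge[X\mid\Fcal_{q_n}]=V_t$ a.s. This uses right-continuity of $\F$ in an essential way and is exactly what property \ref{PCI:cont I} was designed for. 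Finally, uniqueness up to evanescence: if $V,V'$ are two such processes, then for each rational $q$ they agree a.s., so off a single nullset they agree at all rationals, and then the right-continuity identity $V_t=\inf_{s>t,s\in\Q}V_s$ forces them to agree at all $t$ simultaneously, i.e.\ $\{V\ne V'\}$ is evanescent. I expect the only mild obstacle to be bookkeeping the nullsets so that a \emph{single} exceptional set works for all $t$ at once — which is handled by restricting all pathwise statements to the rationals and then transporting them to general $t$ via the infimum formula.
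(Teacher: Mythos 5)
Your overall strategy is the same as the paper's: fix versions of $\bigwedge[X\mid\Fcal_q]$ along a countable dense set of times, use Lemma~\ref{L:PCI}\ref{PCI:mon I} to get monotonicity off an exceptional set, define $V_t$ as a right-hand infimum over that skeleton, identify it with $\bigwedge[X\mid\Fcal_t]$ via Lemma~\ref{L:PCI}\ref{PCI:cont I} together with right-continuity of $\F$, and deduce uniqueness up to evanescence from right-continuity. The genuine gap is adaptedness. The paper explicitly does \emph{not} assume that $\F$ is augmented with the $\P$-nullsets, so you cannot dispose of the exceptional set the way you do: your $N$ is built from the monotonicity events $\{\bigwedge[X\mid\Fcal_r]\le\bigwedge[X\mid\Fcal_s]\}$ over \emph{all} pairs of rationals $r<s$, hence lies only in $\Fcal_\infty$, and redefining $V_t=-\infty$ on $N$ therefore destroys $\Fcal_t$-measurability. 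The problem is not cured on $N^c$ either: pointwise, $\inf_{q>t,\,q\in\Q}\bigwedge[X\mid\Fcal_q](\omega)$ depends on the chosen versions at arbitrarily large rational times, and outside the monotonicity event its value genuinely depends on that far-future information; so the claim that this infimum is $\bigcap_{u>t}\Fcal_u$-measurable does not follow. It only reduces to an infimum over rationals in a small window just to the right of $t$ \emph{after} restricting to a monotonicity event, and for adaptedness that restriction must be made with a set in $\Fcal_t$, which $N^c$ is not. (If $\F$ were $\P$-complete your construction would be fine; under the paper's standing assumptions it is not.)

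The paper's proof is organized exactly around this point: instead of one global nullset it uses time-dependent good sets $H_t$, defined through monotonicity of the rational skeleton only among rationals up to times slightly beyond $t$. Such a set is built from events lying in $\Fcal_u$ for $u>t$, hence belongs to $\Fcal_{t+}=\Fcal_t$ by right-continuity of $\F$, it has full probability by Lemma~\ref{L:PCI}\ref{PCI:mon I}, and on it the defining infimum is determined by the rationals immediately to the right of $t$; off $H_t$ one sets $V_t=+\infty$, a choice compatible with the process being nondecreasing. With that modification the rest of your argument goes through: the order-theoretic identity $V_t=\inf_{s>t}V_s$ by associativity of infima, the two inequalities giving $V_t=\bigwedge[X\mid\Fcal_t]$ (maximality in Definition~\ref{D:cond inf} for one direction, \ref{PCI:cont I} and right-continuity of $\F$ for the other), and the uniqueness argument via agreement at rational times plus right-continuity are all correct as you wrote them.
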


\begin{proof}
Fix a dense countable subset $D\subset\R_+$, and let $V^0_t$ be a version of $\bigwedge[X\mid\Fcal_t]$ for each $t\in D$. For each $t\in \R_+$, define
\[
H_t = \bigcap_{u>t} H^0_u, \qquad  H^0_t = \bigcap_{\substack{r,s\in D\\ r<s\le t}} \left\{ \omega\colon V^0_r(\omega) \le V^0_s(\omega) \right\}.
\]
Thus $H_t$ is the set of $\omega$ such that the map $s\mapsto V^0_s(\omega)$ is nondecreasing on $D\cap[0,t+\varepsilon]$ for some $\varepsilon>0$. One has $\P(H_t)=1$ by Lemma~\ref{L:PCI}\ref{PCI:mon I}, as well as $H_t\in\Fcal_t$ by right-continuity of $\F$. Define $V=(V_t)_{t\ge 0}$ by
\[
V_t(\omega) = \begin{cases} \inf_{s\ge t,\,s\in D} V^0_s(\omega) & \omega\in H_t, \\ +\infty & \omega\notin H_t.\end{cases}
\]
It follows that $V$ is adapted, nondecreasing, and right-continuous. Furthermore, Lemma~\ref{L:PCI}\ref{PCI:cont I} and right-continuity of $\F$ yield $V_t=\inf_{s\ge t,\,s\in D} V^0_s = \bigwedge[X\mid F_t]$. The uniqueness statement follows from the almost sure uniqueness of each $V_t$ together with right-continuity.
\end{proof}

\begin{lemma}[Optional stopping] \label{L:optional sampling}
Let $X$ be a random element of $S$ and let $V=(V_t)_{t\ge0}$ be the regular version of $V_t=\bigwedge[X\mid\Fcal_t]$. Then
\[
V_\tau = \bigwedge[X\mid \Fcal_\tau ]
\]
for every stopping time $\tau$.
\end{lemma}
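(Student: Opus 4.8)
The plan is to first establish the identity for stopping times with countably many values, where it reduces to the defining property of the conditional infimum together with assumptions~\ref{A1}--\ref{A2}, and then to reduce a general $\tau$ to this case by approximating from above, exploiting the right-continuity of $V$ and of $\F$ via Lemma~\ref{L:PCI}\ref{PCI:cont I}. I treat finite stopping times below; the value $+\infty$ is incorporated identically, with $\Fcal_\infty$ in place of $\Fcal_t$ and the convention $V_\infty:=\bigwedge[X\mid\Fcal_\infty]$.

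\emph{Step 1 (countably-valued stopping times).} Suppose $\sigma$ takes values in a countable set $\{s_1,s_2,\ldots\}$, and set $A_i=\{\sigma=s_i\}$, so that $\{A_i\}_i$ partitions $\Omega$ and $A_i\in\Fcal_{s_i}\cap\Fcal_\sigma$. I would verify the three requirements of Definition~\ref{D:cond inf}. (i) \emph{$\Fcal_\sigma$-measurability of $V_\sigma$:} one has the pointwise identity $V_\sigma=\inf_i\bigl(V_{s_i}\vee\chi_{A_i}\bigr)$, and each term here is $\Fcal_\sigma$-measurable by inspecting its preimages (using $A_i\in\Fcal_{s_i}$ and adaptedness of $V$), so the countable infimum is $\Fcal_\sigma$-measurable by~\ref{A2}. (ii) \emph{$V_\sigma\le X$ a.s.:} on $A_i$ one has $V_\sigma=V_{s_i}=\bigwedge[X\mid\Fcal_{s_i}]\le X$ a.s., and there are only countably many $i$. (iii) \emph{Maximality:} given an $\Fcal_\sigma$-measurable $Z$ with $Z\le X$ a.s., the truncation $Z\wedge\chi_{A_i^c}$ (which equals $Z$ on $A_i$ and $-\infty$ off $A_i$) is $\Fcal_{s_i}$-measurable --- this is the lattice analogue of the elementary fact that $Z\mathbf{1}_{A_i}$ is $\Fcal_{s_i}$-measurable in the real-valued case, and it follows from the definition of $\Fcal_\sigma$ and~\ref{A1}; since $Z\wedge\chi_{A_i^c}\le X$ a.s., Definition~\ref{D:cond inf} gives $Z\wedge\chi_{A_i^c}\le V_{s_i}$ a.s., hence $Z\le V_\sigma$ a.s.\ on $A_i$, and unioning over $i$ yields $Z\le V_\sigma$ a.s. Taken together, (i)--(iii) say $V_\sigma=\bigwedge[X\mid\Fcal_\sigma]$.

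\emph{Step 2 (general stopping times).} Given $\tau$, put $\tau_n=2^{-n}\bigl(\lfloor 2^n\tau\rfloor+1\bigr)$. It is standard that each $\tau_n$ is a stopping time with countably many values, that $\tau<\tau_n$, $\tau_{n+1}\le\tau_n$ and $\tau_n\downarrow\tau$, and hence that $\{\Fcal_{\tau_n}\}_n$ is non-increasing with $\bigcap_n\Fcal_{\tau_n}=\Fcal_{\tau+}=\Fcal_\tau$, the last equality by right-continuity of $\F$. Step~1 gives $V_{\tau_n}=\bigwedge[X\mid\Fcal_{\tau_n}]$, so Lemma~\ref{L:PCI}\ref{PCI:cont I} yields $\bigwedge[X\mid\Fcal_\tau]=\inf_n\bigwedge[X\mid\Fcal_{\tau_n}]=\inf_n V_{\tau_n}$. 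Finally, since $V$ is nondecreasing and right-continuous and $\tau_n\downarrow\tau$ with $\tau_n>\tau$, for every $\omega$ one has $\inf_n V_{\tau_n}(\omega)=\inf_{s>\tau(\omega)}V_s(\omega)=V_\tau(\omega)$; combining this with the previous display gives $V_\tau=\bigwedge[X\mid\Fcal_\tau]$.

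The stopping-time bookkeeping in Step~2 and the monotone-limit identity for $V$ are routine. The step I expect to carry the real content is Step~1, and within it the measurability assertion that $Z\wedge\chi_{A_i^c}$ is $\Fcal_{s_i}$-measurable: this is precisely where the order-theoretic indicator $\chi$ and assumptions~\ref{A1}--\ref{A2} substitute for the multiplicative localisation to $A_i$ that is available only in the case $S=\overline\R$.
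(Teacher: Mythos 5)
Your proof is correct and takes essentially the same route as the paper's: a reduction to discrete stopping times via a decreasing approximation together with right-continuity of $\F$ and $V$ and Lemma~\ref{L:PCI}\ref{PCI:cont I}, and, in the discrete case, exactly the paper's truncation device (your $Z\wedge\chi_{A_i^c}$ is the paper's $Y_n$, equal to $Y$ on $A_n$ and $-\infty$ off it), with the difference that the paper uses finitely-valued approximants and checks only the maximality property explicitly. One cosmetic nitpick: the $\Fcal_{s_i}$-measurability of $Z\wedge\chi_{A_i^c}$ comes from the direct preimage computation $\{Z\wedge\chi_{A_i^c}\in B\}=(\{Z\in B\}\cap A_i)\cup(\{-\infty\in B\}\cap A_i^c)$ using the definition of $\Fcal_\sigma$, not from \ref{A1}.
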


\begin{proof}
It suffices to prove the result for $\tau$ taking finitely many values. Indeed, in the general case one has $\lim_{m\to\infty}\tau_m=\tau$ for some non-increasing sequence of stopping times~$\tau_m$ taking finitely many values. Lemma~\ref{L:PCI}\ref{PCI:cont I} and right-continuity of $\F$ and $V$ then yield the results.

We therefore suppose $\tau=\sum_n t_n\bm 1_{A_n}$ for finitely many distinct $t_n\in[0,\infty]$ and pairwise disjoint sets $A_n\in\Fcal_{t_n}$ forming a partition of $\Omega$. Let $Y$ be any $\Fcal_\tau$-measurable random element of $S$ with $Y\le X$. We must show that $Y\le V_\tau$. To this end, define the random elements
\[
Y_n = \begin{cases} Y &\text{on $A_n$} \\ -\infty &\text{on $A_n^c$} \end{cases}
\]
For any $B\in\Scal$ one has $\{Y_n\in B\} = \left(\{Y\in B\}\cap A_n\right) \cup \left(\{\infty\in B\}\cap A_n^c\right)$. This event lies in $\Fcal_{t_n}$ since $\{Y\in B\}\cap A_n=\{Y\in B\}\cap\{\tau=t_n\}\in\Fcal_{t_n}$ by $\Fcal_\tau$-measurability of $Y$, and since $A_n^c=\{\tau\ne t_n\}\in\Fcal_{t_n}$ due to the fact that $\tau$ is a stopping time. Consequently $Y_n$ is $\Fcal_{t_n}$-measurable and satisfies $Y_n\le Y\le X$, so by definition of the conditional infimum we have $Y_n\le\bigwedge[X\mid\Fcal_{t_n}]=V_{t_n}$. Therefore, $Y = \inf_n (Y_n\vee\chi_{A_n}) \le \inf_n (V_{t_n}\vee\chi_{A_n}) = V_\tau$, as required.
\end{proof}

\begin{example}
It is not true in general that $V_{t-}=\bigwedge[X\mid\Fcal_{t-}]$. For example, suppose $S=\overline\R$. Let $W$ be standard Brownian motion and $\F$ the right-continuous filtration it generates. Set $X=|W_1|$ and let $V$ be the regular version of $V_t=\bigwedge[X\mid\Fcal_t]$. Then $V_t=0$ for all $t<1$, but since $\Fcal_{1-}=\Fcal_1$ one has $\bigwedge[X\mid\Fcal_{1-}]=X>0$.
\end{example}

The following theorem is the main result of this section. It provides equivalent conditions for when a monotone process can be recovered from its final value by taking conditional infima.

\begin{theorem}[Recovery of monotone processes] \label{T:NCR}
Let $U=(U_t)_{t\ge0}$ be an adapted nondecreasing right-continuous $S$-valued process, and define $U_\infty=\sup_{t\ge0}U_t$. The following conditions are equivalent, where the regular version of $\bigwedge[U_\infty\mid\Fcal_t]$ is understood:
\begin{enumerate}
\item\label{T:NCR:1} $U_t = \bigwedge[U_\infty\mid\Fcal_t]$ for all $t\ge0$;
\item\label{T:NCR:2} Any stopping time $\tau$ with $U_\tau<Y$ on $\{\tau<\infty\}$ for some $\Fcal_\tau$-measurable $S$-valued random variable $Y\le U_\infty$ satisfies $\P(\tau<\infty)=0$.
\item\label{T:NCR:3}
$\P(Y\le U_\infty \mid\Fcal_\tau)<1$ holds on $\{U_\tau<+\infty\}$ for every stopping time $\tau$ and every $\Fcal_\tau$-measurable $S$-valued random variable $Y$ with $U_\tau<Y$ on $\{U_\tau<+\infty\}$.
\end{enumerate}
\end{theorem}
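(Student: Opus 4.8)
The plan is to establish the cyclic chain of implications \ref{T:NCR:1}$\Rightarrow$\ref{T:NCR:2}$\Rightarrow$\ref{T:NCR:3}$\Rightarrow$\ref{T:NCR:1}. The implication \ref{T:NCR:1}$\Rightarrow$\ref{T:NCR:2} is the ``no-arbitrage'' direction and should be quick: if \ref{T:NCR:1} holds, then by Lemma~\ref{L:optional sampling} (optional stopping) one has $U_\tau=\bigwedge[U_\infty\mid\Fcal_\tau]$, so $U_\tau$ is the \emph{greatest} $\Fcal_\tau$-measurable lower bound on $U_\infty$. Given $Y$ as in \ref{T:NCR:2}, the random element $Y\wedge\chi_{\{\tau<\infty\}^c}\vee(\text{something forcing }-\infty\text{ off }\{\tau<\infty\})$ — more carefully, the $\Fcal_\tau$-measurable element equal to $Y$ on $\{\tau<\infty\}$ and to $-\infty$ on $\{\tau=\infty\}$ — is an $\Fcal_\tau$-measurable lower bound on $U_\infty$ (using $Y\le U_\infty$ on $\{\tau<\infty\}$; on $\{\tau=\infty\}$ it is $-\infty$), hence it is $\le U_\tau=\bigwedge[U_\infty\mid\Fcal_\tau]$. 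But on $\{\tau<\infty\}$ this element equals $Y>U_\tau$, forcing $\P(\tau<\infty)=0$.

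For \ref{T:NCR:2}$\Rightarrow$\ref{T:NCR:3}, I would argue by contraposition. Suppose \ref{T:NCR:3} fails: there is a stopping time $\tau$ and an $\Fcal_\tau$-measurable $Y$ with $U_\tau<Y$ on $\{U_\tau<+\infty\}$, yet $\P(Y\le U_\infty\mid\Fcal_\tau)=1$ on a non-null subset of $\{U_\tau<+\infty\}$. Call that subset $A\in\Fcal_\tau$. On $A$ one has $\P(Y\le U_\infty\mid\Fcal_\tau)=1$; since $\{U_\infty\ge Y\}$ (which is in $\Fcal_\infty$ by \ref{A1}) has full conditional probability on $A$, replacing $\tau$ by $\tau$ restricted to $A$ (i.e., $\tau_A=\tau$ on $A$ and $\tau_A=\infty$ off $A$, a stopping time since $A\in\Fcal_\tau$) and $Y$ by $Y$ on $A$, $-\infty$ off $A$, we need $U_{\tau_A}<Y$ on $\{\tau_A<\infty\}=A$ (true, since $A\subseteq\{U_\tau<+\infty\}$) and $Y\le U_\infty$ a.s.\ on $A$. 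The latter holds because $\P(Y\le U_\infty\mid\Fcal_\tau)=1$ on $A$ implies $\P(A\cap\{Y\le U_\infty\})=\P(A)$. Thus $\tau_A$ violates \ref{T:NCR:2}.

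The implication \ref{T:NCR:3}$\Rightarrow$\ref{T:NCR:1} is the substantive one and I expect it to be the main obstacle. Fix $t$ and write $W=\bigwedge[U_\infty\mid\Fcal_t]$; we always have $U_t\le W\le U_\infty$ since $U_t$ is an $\Fcal_t$-measurable lower bound on $U_\infty$ (monotonicity of $U$). Suppose for contradiction $\P(U_t<W)>0$. The idea is to produce from $W$ a ``forbidden'' pair violating \ref{T:NCR:3}: take the constant stopping time $\tau\equiv t$, and set $Y=W$ on $\{U_t<W\}$ and $Y=+\infty$ off it — but we need $U_t<Y$ on all of $\{U_t<+\infty\}$, which may fail where $U_t=W<+\infty$. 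The fix is to localize: on $\{U_t<W\}$ we have $U_t<W\le U_\infty$, so $\P(W\le U_\infty\mid\Fcal_t)=1$ there by definition of $W$ (indeed $W\le U_\infty$ a.s.). This already contradicts \ref{T:NCR:3} \emph{provided} we can arrange $Y$ to be $\Fcal_t$-measurable with $U_t<Y$ precisely on a non-null $\Fcal_t$-set inside $\{U_t<+\infty\}$; since $\{U_t<W\}\in\Fcal_t$ and is contained in $\{U_t<+\infty\}$, and $W>U_t$ there, the choice $Y=W\vee\chi_{\{U_t<W\}^c}$ (equal to $W$ on $\{U_t<W\}$, equal to $+\infty$ elsewhere) works: it is $\Fcal_t$-measurable, satisfies $U_t<Y$ on $\{U_t<+\infty\}$ (on $\{U_t<W\}$ because $U_t<W=Y$; elsewhere because $Y=+\infty$ unless $U_t=+\infty$), and $\P(Y\le U_\infty\mid\Fcal_t)=1$ on the non-null set $\{U_t<W\}\subseteq\{U_t<+\infty\}$ because $Y=W\le U_\infty$ there. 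This contradicts \ref{T:NCR:3}, so $\P(U_t<W)=0$, i.e.\ $U_t=\bigwedge[U_\infty\mid\Fcal_t]$. The care points are the correct manipulation of $\chi_A$ and $\pm\infty$ values (as in the proof of Lemma~\ref{L:PCI}\ref{PCI:max linear}) to keep everything measurable and to ensure the strict-inequality and conditional-probability statements land on the right events; and checking that the regular version of $\bigwedge[U_\infty\mid\Fcal_\cdot]$ is what appears throughout, which is legitimate by Lemma~\ref{L:RC version}.
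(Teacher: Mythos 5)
Your proposal follows the same cyclic scheme \ref{T:NCR:1}$\Rightarrow$\ref{T:NCR:2}$\Rightarrow$\ref{T:NCR:3}$\Rightarrow$\ref{T:NCR:1} and essentially the same constructions as the paper (optional stopping plus maximality of $\bigwedge[U_\infty\mid\Fcal_\tau]$ for the first step; localization to the set $A$ where the conditional probability equals one for the second; the set $\{U_t<\bigwedge[U_\infty\mid\Fcal_t]\}$ with a $+\infty$-padded $Y$ for the third), so it is essentially the paper's proof, the only structural difference being that you run \ref{T:NCR:2}$\Rightarrow$\ref{T:NCR:3} contrapositively. One justification is off, though easily repaired: in that step you assert $\{\tau_A<\infty\}=A$ ``since $A\subseteq\{U_\tau<+\infty\}$'', but $U_\tau<+\infty$ does not force $\tau<\infty$ (on $\{\tau=\infty\}$ one has $U_\tau=U_\infty$, which need not be the top element of $S$); in fact $\{\tau_A<\infty\}=A\cap\{\tau<\infty\}$, and to contradict \ref{T:NCR:2} you must add, as the paper does, that $A\cap\{\tau=\infty\}$ is null because there $Y\le U_\infty=U_\tau<Y$ almost surely, whence $\P(\tau_A<\infty)=\P(A)>0$. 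Finally, under the paper's convention for $\chi$, the element you want in the last step is $W\vee\chi_{\{U_t<W\}}$ rather than $W\vee\chi_{\{U_t<W\}^c}$ (the paper's own proof contains the same complement slip); your verbal description and the verification based on it are correct.
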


Condition~\ref{T:NCR:2} of Theorem~\ref{T:NCR} excludes sure improvements. Indeed, if the condition fails for some stopping time $\tau$, then on the positive probability event $\{\tau<\infty\}$, one has $U_\tau <Y\le U_\infty$, where $Y$ is $\Fcal_\tau$-measurable. At time $\tau$ one is therefore guaranteed that $U$ will increase in the future by an amount that is ``bounded away from zero''. In economic terms, supposing $U$ is real-valued to fix ideas, one can think of a situation where exchanging the current value $U_\tau$ for the final outcome $U_\infty$ is guaranteed to result in an $\Fcal_\tau$-measurable gain of at least $Y-U_\tau>0$. On the other hand, if condition~\ref{T:NCR:2} is satisfied, then there is no nontrivial $\Fcal_\tau$-measurable lower bound on the gain. In this sense, \ref{T:NCR:2} is reminiscent of the no-arbitrage type conditions appearing in mathematical finance. This analogy is brought further by the equivalent characterization~\ref{T:NCR:1}, which can be thought of as a martingale condition.

In contrast to the correspondence between no arbitrage and martingales however, Theorem~\ref{T:NCR} does not involve any equivalent changes of probability measure. This is because the conditional infimum only depends on the probability measure through its nullsets, which carries over to the ``martingale'' condition~\ref{T:NCR:1}. Both~\ref{T:NCR:1} and~\ref{T:NCR:2} are thus invariant with respect to equivalent measure changes.

The equivalent property \ref{T:NCR:3} is similar to~\ref{T:NCR:2}, but looks more convoluted. The reason for stating it is that it is closely related to the notion of {\em stickiness} for real-valued increasing processes. In fact, \ref{T:NCR:3} may be viewed as a natural generalization of the stickiness property to processes on $[0,\infty)$ with values in a lattice $S$ which satisfies the assumptions~\ref{A1}--\ref{A3}. Sticky processes are discussed in Section~\ref{S:sticky}.

\begin{proof}[Proof of Theorem~\ref{T:NCR}]
\ref{T:NCR:1} $\Rightarrow$ \ref{T:NCR:2}: Pick a stopping time $\tau$ and an $\Fcal_\tau$-measurable random variable $Y\le U_\infty$ such that $U_\tau<Y$ on $\{\tau<\infty\}$. In particular, $Y\le\bigwedge[U_\infty\mid\Fcal_\tau]$. Together with~\ref{T:NCR:1} and Lemma~\ref{L:optional sampling}, this yields
\[
\bigwedge[U_\infty\mid\Fcal_\tau] = U_\tau < Y \le \bigwedge[U_\infty\mid\Fcal_\tau]
\]
on $\{\tau<\infty\}$. Thus $\P(\tau<\infty)=0$ as required, showing that~\ref{T:NCR:2} holds.

\ref{T:NCR:2} $\Rightarrow$ \ref{T:NCR:3}: Pick a stopping time $\tau$ and an $\Fcal_\tau$-measurable random variable $Y$ with $Y\le U_\infty$ and $U_\tau<Y$ on $\{U_\tau<\infty\}$. Define $A=\{\P(Y\le U_\infty\mid\Fcal_\tau)=1\}\cap\{U_\tau<\infty\}$. We must show that $\P(A)=0$. To this end, define the stopping time
\[
\sigma = \tau\bm 1_A + \infty \bm 1_{A^c}
\]
and the $\Fcal_\sigma$-measurable random variable
\[
Z = (Y\vee \chi_A)\wedge (U_t\vee \chi_{A^c}) = \begin{cases} Y & \text{on $A$}\\ U_t &\text{on $A^c$.}\end{cases}
\]
Since $Y\le U_\infty$ on $A$, we have $Z\le U_\infty$. Moreover, since $\{\sigma<\infty\}\subseteq A$, we have $U_\tau<Z$ on $\{\sigma<\infty\}$. Thus~\ref{T:NCR:2} implies $\P(\sigma<\infty)=0$. Therefore, using also that $A\cap\{\tau=\infty\}$ is a nullset since $Y\le U_\infty=U_\tau<Y$ there, we obtain
\[
\P(A) = \P(A\cap\{\tau<\infty\}) = \P(\sigma<\infty) = 0,
\]
as required.

\ref{T:NCR:3} $\Rightarrow$ \ref{T:NCR:1}: Pick $t\ge0$ and let $A=\{U_t<\bigwedge[U_\infty\mid\Fcal_t]\}$. Define $Y=\bigwedge[U_\infty\mid\Fcal_t] \vee \chi_{A^c}$. Then $Y$ is $\Fcal_t$-measurable, with $U_t<Y$ on $A$ and $Y=+\infty$ on $A^c$, hence $U_t<Y$ on $\{U_t<+\infty\}$. Moreover, $Y\le U_\infty$ on $A$, hence $\P(Y\le U_\infty\mid\Fcal_t)=1$ on $A\subseteq\{U_t<+\infty\}$. By \ref{T:NCR:3}, this forces $\P(A)=0$. Thus~\ref{T:NCR:1} holds.
\end{proof}

\section{Sticky processes} \label{S:sticky}

In this section we apply the theory of Section~\ref{S:CI} with the complete lattice $\overline\R=[-\infty,\infty]$ in Example~\ref{E:CI 1}. We are thus dealing with scalar (i.e., extended real-valued) non-decreasing processes and conditional infima of scalar random variables. In this context there is a close connection between condition~\ref{T:NCR:3} of Theorem~\ref{T:NCR} and the notion of {\em stickiness}.

Throughout this section, $X$ denotes a c\`adl\`ag adapted process with values in a given separable metric space $(\Xcal,d)$, which is equipped with its Borel $\sigma$-algebra.

\begin{definition} \label{D:sticky}
We call $X$ {\em globally sticky} if $\P(\sup_{t\in[\tau,\infty)}d(X_t,X_\tau) \le \varepsilon \mid \Fcal_\tau) > 0$ for every stopping time $\tau$ and every strictly positive $\Fcal_\tau$-measurable random variable $\varepsilon$. We call $X$ {\em sticky} if the stopped process $X^T=(X_{t\wedge T})_{t\ge0}$ is globally sticky for every $T\in\R_+$.
\end{definition}

Note that on the event $\{\tau=\infty\}$, the supremum in this definition is taken over the empty set, and therefore equals $-\infty$ by convention. Thus the conditional probability is equal to one on this event. Furthermore, we never have to evaluate the possibly undefined quantity~$X_\infty$.

\begin{remark}
The terminology of Definition~\ref{D:sticky} is consistent with the existing literature, where stickiness is generally defined for process on a bounded time interval $[0,T]$. In our setting it is more natural to work with process on $[0,\infty)$, which makes the notion of global stickiness convenient.
\end{remark}


A wide variety of processes are sticky. For example, any one-dimensional regular strong Markov process is sticky, see \citet[Proposition~3.1]{G:06}. Moreover, any process with conditional full support is sticky; see \citet{GRS:08,BPS:15}. This includes most L\'evy processes, large classes of solutions of stochastic differential equations, processes like skew Brownian motion, as well as non-Markovian non-semimartingales like fractional Brownian motion. We will return to the conditional full support property in connection with Proposition~\ref{P:int f sticky} below. Continuous functions of sticky processes are sticky, and stickiness is preserved under bounded time changes. \citet{RS:16} provide further examples and references, and we develop some additional results in this direction below.

For a non-decreasing $\R$-valued process $U$, global stickiness reduces to the condition
\begin{equation} \label{eq:U sticky}
\P(U_\infty \le U_\tau + \varepsilon \mid \Fcal_\tau) > 0 \text{ for any stopping time $\tau$ and $\Fcal_\tau$-measurable $\varepsilon>0$,}
\end{equation}
where $U_\infty=\lim_{t\to\infty}U_t$. This immediately yields the following corollary of Theorem~\ref{T:NCR}, which explains the relevance of stickiness in the present context.

\begin{corollary} \label{C:sticky}
An adapted non-decreasing right-continuous $\R$-valued process $U$ satisfies $U_t=\bigwedge[U_\infty\mid\Fcal_t]$ for all $t\ge0$ if and only if it is globally sticky.
\end{corollary}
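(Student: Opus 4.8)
The plan is to obtain Corollary~\ref{C:sticky} directly from Theorem~\ref{T:NCR} applied with the complete lattice $S=\overline\R$ of Example~\ref{E:CI 1}. That theorem already gives the equivalence of condition~\ref{T:NCR:1} (which is precisely the asserted identity $U_t=\bigwedge[U_\infty\mid\Fcal_t]$ for all $t\ge0$) with condition~\ref{T:NCR:3}, so the only thing to do is to recognize~\ref{T:NCR:3} as global stickiness. For a nondecreasing $\R$-valued $U$ the latter unwinds, straight from Definition~\ref{D:sticky}, to~\eqref{eq:U sticky}: indeed $d(U_t,U_\tau)=U_t-U_\tau$ for $t\ge\tau$, so $\sup_{t\in[\tau,\infty)}(U_t-U_\tau)=U_\infty-U_\tau$, and $\{\sup_{t\ge\tau}(U_t-U_\tau)\le\varepsilon\}=\{U_\infty\le U_\tau+\varepsilon\}$. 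Thus the whole argument reduces to the equivalence \ref{T:NCR:3}~$\Leftrightarrow$~\eqref{eq:U sticky}. Before carrying it out I would record the harmless facts that $U_\infty=\lim_{t\to\infty}U_t$ exists in $\overline\R$ by monotonicity (so it is the same random variable in both statements), that $U_\tau$ is $\Fcal_\tau$-measurable by adaptedness and right-continuity of $U$, and that on the $\Fcal_\tau$-measurable set $\{U_\tau=+\infty\}$ one has $U_\infty=U_\tau+\varepsilon=+\infty$, so both conditions hold there trivially.

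For \eqref{eq:U sticky}~$\Rightarrow$~\ref{T:NCR:3}: given a stopping time $\tau$ and an $\Fcal_\tau$-measurable $Y$ with $U_\tau<Y$ on $A:=\{U_\tau<+\infty\}$, I would build a strictly positive $\Fcal_\tau$-measurable $\varepsilon$ by setting $\varepsilon=\tfrac12(Y-U_\tau)\wedge 1$ on $A$ (which equals $1$ where $Y=+\infty$) and $\varepsilon=1$ on $A^c$. A one-line check gives $U_\tau+\varepsilon<Y$ on $A$, hence $\{U_\infty\le U_\tau+\varepsilon\}\subseteq\{U_\infty<Y\}$ there, and then~\eqref{eq:U sticky} yields $\P(Y\le U_\infty\mid\Fcal_\tau)=1-\P(U_\infty<Y\mid\Fcal_\tau)\le 1-\P(U_\infty\le U_\tau+\varepsilon\mid\Fcal_\tau)<1$ on $A$, which is exactly~\ref{T:NCR:3}.

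For the converse \ref{T:NCR:3}~$\Rightarrow$~\eqref{eq:U sticky}: given $\tau$ and a strictly positive $\Fcal_\tau$-measurable $\varepsilon$, I would apply~\ref{T:NCR:3} to $Y:=U_\tau+\varepsilon$ (equal to $+\infty$ where $U_\tau=+\infty$). This $Y$ is $\Fcal_\tau$-measurable and satisfies $U_\tau<Y$ on $\{U_\tau<+\infty\}$, so~\ref{T:NCR:3} gives $\P(U_\infty<Y\mid\Fcal_\tau)>0$ on $\{U_\tau<+\infty\}$; since $\{U_\infty<Y\}\subseteq\{U_\infty\le U_\tau+\varepsilon\}$ this produces the inequality in~\eqref{eq:U sticky} on $\{U_\tau<+\infty\}$, and on $\{U_\tau=+\infty\}$ it is trivial by the remark above.

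There is no real obstacle here: the substantive content is all in Theorem~\ref{T:NCR}, and what remains is careful bookkeeping with the conventions for $\pm\infty$. The two points that actually need attention are the truncation by $1$ in the construction of $\varepsilon$ (needed because $Y$, hence $Y-U_\tau$, may be $+\infty$) and the separate treatment of the set $\{U_\tau=+\infty\}$; and, to be fully self-contained, one should spell out explicitly why global stickiness of a nondecreasing $U$ is literally~\eqref{eq:U sticky}, which is the single place the reduction stated just before the corollary is used.
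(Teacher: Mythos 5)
Your proposal is correct and follows essentially the same route as the paper: invoke Theorem~\ref{T:NCR} with $S=\overline\R$ and identify condition~\ref{T:NCR:3} with the stickiness property~\eqref{eq:U sticky}, the only cosmetic difference being that the paper passes between weak and strict inequalities via the $\varepsilon/2$ trick while you absorb this into the choice $\varepsilon=\tfrac12(Y-U_\tau)\wedge 1$. Your explicit bookkeeping for $Y=+\infty$ and the set $\{U_\tau=+\infty\}$ matches the paper's brief remark that $U$ is finite-valued, so nothing is missing.
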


\begin{proof}
View $U$ as taking values in $S=\overline\R$. By Theorem~\ref{T:NCR}, the equality $U_t=\bigwedge[U_\infty\mid\Fcal_t]$ holds for all $t\in\R_+$ if and only if $\P(U_\tau + \varepsilon \le U_\infty\mid\Fcal_\tau)<1$ holds on $\{U_\tau<\infty\}$ for every stopping time $\tau$ and every $\Fcal_\tau$-measurable $\varepsilon>0$. Applying this with~$\varepsilon/2$ in place of~$\varepsilon$, one sees that the weak inequality can be replaced by a strict inequality. Therefore the inequality $\P(U_\tau + \varepsilon \le U_\infty\mid\Fcal_\tau)<1$ can be replaced by $\P(U_\infty \le U_\tau + \varepsilon \mid\Fcal_\tau)>0$. Consequently, since $U$ is actually finite-valued, the above statement is equivalent to the stickiness property~\eqref{eq:U sticky}.
\end{proof}

\begin{remark} Inspired by Corollary~\ref{C:sticky}, one may use condition \ref{T:NCR:3} of Theorem~\ref{T:NCR} to {\em define} global stickiness for nondecreasing processes valued in a complete lattice satisfying the assumptions~\ref{A1}--\ref{A3}.
\end{remark}

Corollary~\ref{C:sticky} is useful because non-decreasing functionals of sticky processes are often sticky, which means that there is an abundance of non-decreasing sticky processes. We now provide a number of results in this direction.

\begin{proposition} \label{P:f(X) run max}
Let $U_t=\sup_{s\le t}f(X_s)$, where $f:E\to\R$ is a continuous map. If $X$ is (globally) sticky, then $U$ is also (globally) sticky.
\end{proposition}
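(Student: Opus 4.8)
The plan is to verify the global stickiness of $U_t = \sup_{s\le t} f(X_s)$ directly from Definition~\ref{D:sticky}, exploiting that $U$ is a continuous functional of $X$ and using uniform continuity of $f$ on a suitable compact set. Fix a stopping time $\tau$ and a strictly positive $\Fcal_\tau$-measurable random variable $\varepsilon$. On $\{\tau=\infty\}$ the supremum $\sup_{t\in[\tau,\infty)}|U_t - U_\tau|$ is over an empty set and hence the relevant conditional probability is $1$ there; so I only need to work on $\{\tau<\infty\}$. The key observation is the pathwise bound: if $\sup_{t\in[\tau,\infty)} d(X_t, X_\tau) \le \delta$ for some $\delta>0$, then all the values $f(X_t)$ for $t\ge\tau$ lie within a $\delta$-ball around $X_\tau$, and on that ball $f$ oscillates by a controlled amount. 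More precisely, on such an event $U_\infty = \sup_{t\ge 0} f(X_t) = \sup_{s\le\tau} f(X_s) \vee \sup_{t\ge\tau} f(X_t)$, and since $U_\tau = \sup_{s\le\tau} f(X_s) \ge f(X_\tau)$ while $\sup_{t\ge\tau} f(X_t) \le f(X_\tau) + \omega_f(X_\tau,\delta)$ where $\omega_f(x,\delta)=\sup\{|f(y)-f(x)|\colon d(y,x)\le\delta\}$, we get $U_\tau \le U_\infty \le U_\tau + \omega_f(X_\tau,\delta)$.

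The main obstacle is that $\omega_f(X_\tau,\delta)$ is a random modulus of continuity — $f$ need not be uniformly continuous on all of $\Xcal$ — so I cannot choose a single deterministic $\delta$ that makes $\omega_f(X_\tau,\delta)\le\varepsilon$ everywhere. I would handle this by a localization argument: since $X$ is càdlàg, $X_\tau$ takes values in $\Xcal$ and, after intersecting with the $\Fcal_\tau$-measurable events $\{d(X_\tau, x_0)\le n\}$ for a fixed reference point $x_0$ and $n\in\N$, I may restrict attention to a set where $X_\tau$ lies in a fixed closed ball $\bar B(x_0,n)$, a compact-type localization. On $\{d(X_\tau,x_0)\le n\}\cap\{\varepsilon \ge 1/m\}$ (another $\Fcal_\tau$-measurable event), $f$ is uniformly continuous on the compact set $\bar B(x_0, n+1)$, so there is a deterministic $\delta = \delta(n,m)\in(0,1)$ with $|f(y)-f(x)|\le 1/m$ whenever $x\in\bar B(x_0,n)$, $d(y,x)\le\delta$. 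On this $\Fcal_\tau$-measurable event, the pathwise bound above gives $\{\sup_{t\ge\tau} d(X_t,X_\tau)\le\delta\}\subseteq\{U_\infty \le U_\tau + \varepsilon\}$, and global stickiness of $X$ gives that the former event has positive conditional probability given $\Fcal_\tau$. Hence $\P(U_\infty \le U_\tau + \varepsilon\mid\Fcal_\tau)>0$ on this event.

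Finally I assemble the pieces: $\{\tau<\infty\}$ is covered (up to a null set, using separability of $\Xcal$ so that $\{d(X_\tau,x_0)\le n\}\uparrow\{\tau<\infty\}$) by the countably many $\Fcal_\tau$-measurable events $\{d(X_\tau,x_0)\le n\}\cap\{\varepsilon\ge 1/m\}$, on each of which $\P(U_\infty\le U_\tau+\varepsilon\mid\Fcal_\tau)>0$ a.s.; since conditional probability is determined by its restrictions to a countable measurable cover, $\P(U_\infty\le U_\tau+\varepsilon\mid\Fcal_\tau)>0$ a.s.\ on $\{\tau<\infty\}$, and we already noted it equals $1$ on $\{\tau=\infty\}$. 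By \eqref{eq:U sticky} (equivalently Definition~\ref{D:sticky} applied to the nondecreasing process $U$), $U$ is globally sticky. For the local (non-global) statement, apply the global case to the stopped process $X^T$: since $U^T_t = U_{t\wedge T} = \sup_{s\le t} f(X^T_s)$, stickiness of $X$ (i.e.\ global stickiness of every $X^T$) yields global stickiness of every $U^T$, which is exactly stickiness of $U$.
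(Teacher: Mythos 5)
Your pathwise bound is the right starting point, and your reduction of the non-global case to the global one via $X^T$ matches the paper. But there is a genuine gap in the middle step: you justify the existence of a deterministic modulus $\delta(n,m)$ by claiming that $f$ is uniformly continuous on the ``compact'' set $\bar B(x_0,n+1)$. In the setting of this proposition, $\Xcal$ is only a separable metric space (Section~\ref{S:sticky} assumes nothing more), so closed bounded balls need not be compact --- think of the closed unit ball of a separable infinite-dimensional Banach space --- and a continuous $f$ need not be uniformly continuous on $\bar B(x_0,n+1)$. Hence your localization does not produce the required $\delta(n,m)$, and the argument breaks exactly at the point you yourself identified as ``the main obstacle.''

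The resolution is that no deterministic radius is needed at all: Definition~\ref{D:sticky} allows the radius in the stickiness condition to be any strictly positive $\Fcal_\tau$-measurable random variable. This is how the paper proceeds: on $\{\tau<\infty\}$ the random set $C=f^{-1}\bigl((-\infty,f(X_\tau)+\varepsilon)\bigr)$ is open and contains $X_\tau$, so one can choose a strictly positive $\Fcal_\tau$-measurable $\varepsilon'$ (for instance $\varepsilon'=\tfrac12\bigl(1\wedge d(X_\tau,\Xcal\setminus C)\bigr)$, set to $1$ on $\{\tau=\infty\}$) with $\bar B(X_\tau,\varepsilon')\subseteq C$, and then
\[
\P(U_\infty\le U_\tau+\varepsilon\mid\Fcal_\tau)\ \ge\ \P\bigl(d(X_t,X_\tau)\le\varepsilon'\ \text{for all }t\in[\tau,\infty)\mid\Fcal_\tau\bigr)\ >\ 0
\]
by global stickiness of $X$ applied with the random $\varepsilon'$. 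This eliminates both the compactness assumption and the countable-cover bookkeeping; your proof as written is only valid under an extra hypothesis (e.g.\ $\Xcal$ proper, such as $\R^d$), not in the stated generality.
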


\begin{proof}
Assume that $X$ is globally sticky. The result for $X$ sticky but not globally sticky follows by replacing $X$ by $X^T$ for any $T<\infty$ in the argument below. Fix any stopping time $\tau$ and $\Fcal_\tau$-measurable random variable $\varepsilon>0$. On the event $\{\tau<\infty\}$, the random set $C=f^{-1}((-\infty,f(X_\tau)+\varepsilon))$ is open and contains $X_\tau$. One can find a strictly positive $\Fcal_\tau$-measurable random variable $\varepsilon'$ such that, on $\{\tau<\infty\}$, $C$ contains the closed ball of radius $\varepsilon'$ centered at $X_\tau$. Consequently,
\begin{align*}
\P(U_\infty \le U_\tau+\varepsilon \mid \Fcal_\tau) &\ge\P(f(X_t) \le f(X_\tau) + \varepsilon \text{ for all $t\in[\tau,\infty)$} \mid \Fcal_\tau) \\
&\ge \P( d(X_t,X_\tau) \le \varepsilon' \text{ for all $t\in[\tau,\infty)$} \mid\Fcal_\tau) \\
&>0,
\end{align*}
using that $X$ is globally sticky. Thus $U$ is also globally sticky.
\end{proof}

\begin{corollary}
If $X$ is real-valued and (globally) sticky, then $\overline X_t=\max_{0\le s\le t}X_s$ and $X^*_t=\max_{0\le s\le t}|X_s|$ are also (globally) sticky.
\end{corollary}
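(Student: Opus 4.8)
The plan is to obtain both assertions as immediate instances of Proposition~\ref{P:f(X) run max}. Since $X$ is real-valued, I view it as a c\`adl\`ag adapted process with values in the separable metric space $\Xcal=\R$ equipped with the Euclidean metric, so the hypotheses of that proposition apply directly.

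For the running maximum $\overline X$, I would apply Proposition~\ref{P:f(X) run max} with $f\colon\R\to\R$ the identity map $f(x)=x$, which is continuous. Then $U_t=\sup_{s\le t}f(X_s)=\sup_{s\le t}X_s=\overline X_t$, and the proposition yields that $\overline X$ is (globally) sticky whenever $X$ is.

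For $X^*$, I would instead take $f(x)=|x|$, which is again continuous from $\R$ to $\R$; then $U_t=\sup_{s\le t}f(X_s)=\sup_{s\le t}|X_s|=X^*_t$, and the same proposition gives that $X^*$ is (globally) sticky. In both cases the parenthetical qualifier is handled exactly as in the proposition: if $X$ is sticky but not globally sticky, one runs the argument with the stopped process $X^T$ in place of $X$ for each $T<\infty$.

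There is essentially no obstacle here; the only points to check are that $f(x)=x$ and $f(x)=|x|$ are continuous and that ``real-valued'' indeed places us in the setting of a separable metric space so that Proposition~\ref{P:f(X) run max} is applicable. Both are trivial, so the corollary is a direct specialization.
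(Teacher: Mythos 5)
Your proof is correct and is exactly how the paper intends the corollary to follow: it is a direct specialization of Proposition~\ref{P:f(X) run max} with $f(x)=x$ and $f(x)=|x|$, respectively, with the sticky (non-global) case handled by stopping at $T$ as in that proposition.
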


The next result looks somewhat abstract, but has useful consequences. In particular, it implies that the local time of a sticky semimartingale is again sticky; see Corollary~\ref{C:sticky local time} below. We let $d(x,K)=\inf\{d(x,y)\colon y\in K\}$ denote the distance from a point $x\in \Xcal$ to a subset $K\subseteq\Xcal$.

\begin{proposition} \label{P:X U K sticky}
Let $K\subseteq \Xcal$ be a closed subset, and let $U$ be a nondecreasing right-continuous adapted process that satisfies the following property for almost every~$\omega$:
\begin{equation}\label{P:X U K sticky cond}
\text{\parbox{0.8\textwidth}{If $[t_1,t_2]$ is an interval such that either $X(\omega)\in K$ on $[t_1,t_2)$, or\\ $d(X(\omega),K)\ge a$ on $[t_1,t_2]$ for some $a>0$, then $U$ is constant on $[t_1,t_2]$.}}
\end{equation}
If $X$ is (globally) sticky, then $U$ is also (globally) sticky.
\end{proposition}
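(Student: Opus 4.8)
The plan is to verify that $U$ satisfies~\eqref{eq:U sticky}, which for a nondecreasing process is precisely the global stickiness property. It is enough to treat the globally sticky case: if $X$ is only sticky, one applies the globally sticky result to the stopped processes $X^T$ and $U^T$ for each $T\in\R_+$, after the routine verification that $(X^T,U^T)$ again satisfies~\eqref{P:X U K sticky cond} with the same set $K$. So assume $X$ globally sticky, fix a stopping time $\tau$ and an $\Fcal_\tau$-measurable $\varepsilon>0$, and write $U_\infty=\lim_{t\to\infty}U_t$; on $\{U_\tau=+\infty\}$ the desired inequality is automatic, so we may assume $U_\tau<\infty$ and aim to prove $\P(U_\infty\le U_\tau+\varepsilon\mid\Fcal_\tau)>0$ almost surely.

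The key building block will be: \emph{for every stopping time $\sigma$ with $X_\sigma\notin K$ on $\{\sigma<\infty\}$ one has $\P(U_\infty=U_\sigma\mid\Fcal_\sigma)>0$ on $\{\sigma<\infty\}$.} To see this, apply global stickiness of $X$ at $\sigma$ with the $\Fcal_\sigma$-measurable radius $\tfrac{1}{3}d(X_\sigma,K)>0$ (defined arbitrarily on $\{\sigma=\infty\}$): on the resulting positive-probability event $\{\sup_{t\ge\sigma}d(X_t,X_\sigma)\le\tfrac{1}{3}d(X_\sigma,K)\}$ one has $d(X_t,K)\ge\tfrac{2}{3}d(X_\sigma,K)=:a>0$ for all $t\ge\sigma$, so applying~\eqref{P:X U K sticky cond} to each interval $[\sigma,n]$ and letting $n\to\infty$ yields $U_\infty=U_\sigma$ on that event.

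Now set $\rho_0=\inf\{t\ge\tau\colon X_t\notin K\}$ and, for $\delta>0$, $\rho_\delta=\inf\{t\ge\tau\colon d(X_t,K)>\delta\}$; these are stopping times by right-continuity of $\F$ (and of $t\mapsto d(X_t,K)$), and since $\{X_\cdot\notin K\}=\{d(X_\cdot,K)>0\}$ one checks that $\delta\mapsto\rho_\delta$ is nondecreasing with $\rho_\delta\downarrow\rho_0$ as $\delta\downarrow0$, while $d(X_{\rho_\delta},K)\ge\delta$ on $\{\rho_\delta<\infty\}$. On $\{\rho_0=\infty\}$ the path stays in $K$ throughout $[\tau,\infty)$, so~\eqref{P:X U K sticky cond} forces $U_\infty=U_\tau$; thus $\{\rho_0=\infty\}\subseteq\{U_\infty\le U_\tau+\varepsilon\}$. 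On $\{\rho_0<\infty\}$, the (possibly empty) interval $[\tau,\rho_0)$ lies in $K$, so~\eqref{P:X U K sticky cond} applied to $[\tau,\rho_0]$ gives $U_{\rho_0}=U_\tau$; combined with $\rho_\delta\downarrow\rho_0$ and right-continuity of $U$ this gives $U_{\rho_\delta}\to U_{\rho_0}=U_\tau$, so for $\delta$ small enough (depending on $\omega$) we have $\rho_\delta<\infty$ and $U_{\rho_\delta}\le U_\tau+\varepsilon$. Using monotonicity of $\rho_\delta$ in $\delta$ and of $U$, it follows that $\{\rho_0<\infty\}\subseteq\bigcup_{k\ge1}B_k$ up to a nullset, where $B_k=\{\rho_{1/k}<\infty\}\cap\{U_{\rho_{1/k}}\le U_\tau+\varepsilon\}\in\Fcal_{\rho_{1/k}}$. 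On $B_k$ one has $d(X_{\rho_{1/k}},K)\ge1/k>0$, so the key building block at $\sigma=\rho_{1/k}$ gives $\P(U_\infty=U_{\rho_{1/k}}\mid\Fcal_{\rho_{1/k}})>0$ there, hence $\P(U_\infty\le U_\tau+\varepsilon\mid\Fcal_{\rho_{1/k}})>0$ on $B_k$, and applying $\E[\,\cdot\mid\Fcal_\tau]$ shows $\P(U_\infty\le U_\tau+\varepsilon\mid\Fcal_\tau)>0$ on $\{\P(B_k\mid\Fcal_\tau)>0\}$. Finally, if $\P(U_\infty\le U_\tau+\varepsilon\mid\Fcal_\tau)$ vanished on a set of positive measure, then on that set $\P(\rho_0=\infty\mid\Fcal_\tau)=0$ and $\P(B_k\mid\Fcal_\tau)=0$ for all $k$, whence $1=\P(\rho_0<\infty\mid\Fcal_\tau)\le\P(\bigcup_k B_k\mid\Fcal_\tau)\le\sum_k\P(B_k\mid\Fcal_\tau)=0$, a contradiction. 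This proves $\P(U_\infty\le U_\tau+\varepsilon\mid\Fcal_\tau)>0$ almost surely, so $U$ is (globally) sticky.

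The step I expect to be the main obstacle is the one that dictates the whole structure above: one cannot invoke stickiness of $X$ directly at $\tau$, because when $X_\tau\in K$, keeping $X$ close to $X_\tau$ keeps it close to $K$, which is exactly the regime in which~\eqref{P:X U K sticky cond} still permits $U$ to grow (think of Brownian local time, where confining the Brownian motion near $0$ makes the local time large). The fix is to wait until the first time $\rho_\delta$ at which the path has moved distance $\delta$ away from $K$ — at that instant $U$ has moved by at most $\varepsilon$, since $U$ stays constant while $X$ sits in $K$ and $U$ is right-continuous — and only then to freeze $X$ away from $K$. The rest is bookkeeping: replacing $\delta$ by the countable family $1/k$, and transporting the conditional probabilities from $\Fcal_{\rho_{1/k}}$ back to $\Fcal_\tau$ by the tower property.
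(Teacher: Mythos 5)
Your proof is correct and follows essentially the same route as the paper's: wait until the first time $\rho_\delta$ at which $X$ is at distance about $\delta$ from $K$ (by which time $U$ has grown by at most $\varepsilon$, using \eqref{P:X U K sticky cond} on $[\tau,\rho_0]$, the convergence $\rho_\delta\downarrow\rho_0$ and right-continuity of $U$), then invoke stickiness of $X$ at that stopping time to freeze the path away from $K$ and hence freeze $U$. The only difference is bookkeeping: the paper argues directly on the event $\{\P(U_\infty\le U_\tau+\varepsilon\mid\Fcal_\tau)=0\}$ using the whole family of stopping times $\sigma_a$ and a limit of indicators, whereas you discretize to $\rho_{1/k}$ and the sets $B_k$ and conclude by the tower property; the substance is identical.
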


\begin{proof}
We prove the result for $X$ globally sticky. Fix any stopping time $\tau$ and any $\Fcal_\tau$-measurable $\varepsilon>0$. For each $a>0$, define the stopping time $\sigma_a=\inf\{t\ge \tau\colon d(X_t,K)\ge a\}$. Since $U$ satisfies \eqref{P:X U K sticky cond}, the equality $U_\infty=U_{\sigma_a}$ holds on the event where $d(X_t,K) \ge a/2$ for all $t\in[\sigma_a,\infty)$. Consequently, for any $a>0$,
\begin{align*}
\P(U_\infty \le U_\tau + \varepsilon \mid\Fcal_\tau)
&\ge \P(U_{\sigma_a} \le U_\tau + \varepsilon \text{ and } d(X_t,K) \ge \frac{a}{2} \text{ for all $t\in[\sigma_a,\infty)$} \mid\Fcal_\tau) \\
&= \E\left[ \bm 1_{\{U_{\sigma_a} \le U_\tau + \varepsilon\}}\, \P( d(X_t,K) \ge \frac{a}{2} \text{ for all $t\in[\sigma_a,\infty)$} \mid \Fcal_{\sigma_a}) \Mid \Fcal_\tau \right].
\end{align*}
Consider now the event
\[
A = \{\P(U_\infty \le U_\tau + \varepsilon \mid\Fcal_\tau) = 0\} \in \Fcal_\tau.
\]
Then, by the inequality above,
\[
\bm 1_{\{U_{\sigma_a} \le U_\tau + \varepsilon\}}\, \P( d(X_t,K) \ge a/2 \text{ for all $t\in[\sigma_a,\infty)$} \mid \Fcal_{\sigma_a})=0
\]
holds on~$A$ for all $a>0$. The global stickiness property states that the conditional probability is strictly positive, whence $\bm 1_{\{U_{\sigma_a} \le U_\tau + \varepsilon\}}=0$ on $A$ for all $a>0$. Define the stopping time $\sigma_0=\inf\{t\ge \tau\colon X_t\notin K\}$. Sending $a$ to zero and using that $K$ is closed, we obtain $\sigma_a\downarrow \sigma_0$. Right-continuity and non-decrease of $U$ then yields $U_{\sigma_a}\downarrow U_{\sigma_0}$, and hence
\begin{equation}\label{P:X U K sticky:eq2}
\bm 1_{\{U_{\sigma_0} < U_\tau + \varepsilon\}} \le \lim_{a\downarrow0}\bm 1_{\{U_{\sigma_a} \le U_\tau + \varepsilon\}}=0 \quad\text{on $A$.}
\end{equation}
But since $X$ lies in $K$ on $[\tau,\sigma_0)$, the assumption~\eqref{P:X U K sticky cond} on $U$ implies that $U_{\sigma_0}=U_\tau$. Thus the left-hand side of~\eqref{P:X U K sticky:eq2} equals one, which forces $\P(A)=0$. Thus $\P(U_\infty \le U_\tau + \varepsilon \mid\Fcal_\tau)>0$, that is, $U$ is sticky.
\end{proof}

\begin{corollary} \label{C:sticky local time}
Suppose $X$ is a real semimartingale, and let $L^x$ be its local time at level $x$. If $X$ is (globally) sticky, and $x_1,\ldots,x_n\in\R$, $n\in\N$ are arbitrary, then $L^{x_1}+\cdots+L^{x_n}$ is also (globally) sticky.
\end{corollary}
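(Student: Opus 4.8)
The plan is to apply Proposition~\ref{P:X U K sticky} with the closed set $K=\{x_1,\dots,x_n\}\subseteq\R$ and the process $U=L^{x_1}+\cdots+L^{x_n}$. This $U$ is adapted, nondecreasing, and continuous (semimartingale local times are continuous in the time variable), hence in particular right-continuous, so the only thing to verify is property~\eqref{P:X U K sticky cond} for almost every~$\omega$; the ``(globally) sticky'' conclusion then follows immediately from the proposition, in both the global and non-global forms. Throughout I will use two standard facts about the local time $L^{x_i}$ of the semimartingale $X$: the path $t\mapsto L^{x_i}_t$ is continuous, and the measure $dL^{x_i}$ is carried by the set of times at which $X$ or $X_-$ equals $x_i$.

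Fix $\omega$ outside a suitable nullset and let $[t_1,t_2]$ be an interval as in~\eqref{P:X U K sticky cond}. First suppose $d(X,K)\ge a$ on $[t_1,t_2]$. Then $X_s\notin K$ for $s\in[t_1,t_2]$, and also $X_{s-}\notin K$ for $s\in(t_1,t_2]$, since $X_{s-}$ is a limit of points at distance $\ge a$ from the closed set $K$. Hence $dL^{x_i}$ puts no mass on $(t_1,t_2]$, and by continuity there is no atom at $t_1$ either; thus each $L^{x_i}$, and therefore $U$, is constant on $[t_1,t_2]$.

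The substantive case is $X\in K$ on $[t_1,t_2)$. I would first observe that there the path $X$ is a step function: every jump time $s\in(t_1,t_2)$ satisfies $X_s,X_{s-}\in K$, hence $|\Delta X_s|\ge\delta_0:=\min_{i\ne j}|x_i-x_j|>0$ (with $\delta_0=+\infty$ if $n=1$), and a semimartingale has only finitely many jumps of size bounded below on a compact interval, since $\sum_{s\le t}(\Delta X_s)^2\le[X]_t<\infty$; between consecutive jumps $X$ is continuous and $K$-valued, hence constant. Next I would difference the Tanaka--Meyer formula for $|X-x_i|$ between times $u$ and $t_1$, for any $u\in(t_1,t_2)$: on $[t_1,u]$ the path $X$ is a pure-jump finite-variation path, so the stochastic integral against $dX$ collapses to the sum of jump contributions, and combining it with the jump-correction sum in the Tanaka--Meyer formula makes the whole right-hand side telescope to $|X_u-x_i|-|X_{t_1}-x_i|$; this leaves $L^{x_i}_u-L^{x_i}_{t_1}=0$. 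Letting $u\uparrow t_2$ and invoking continuity of $L^{x_i}$ yields $L^{x_i}_{t_2}=L^{x_i}_{t_1}$ for every $i$, so $U$ is constant on $[t_1,t_2]$. This verifies~\eqref{P:X U K sticky cond}, and Proposition~\ref{P:X U K sticky} then gives the claim.

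The main obstacle is exactly this last case: one must rule out the possibility that some $L^{x_i}$ grows while $X$ rests inside $K$. The subtlety is that local time can increase on a level set even when the process ``sits'' there (as for sticky Brownian motion), so the conclusion is not purely formal; the point is that~\eqref{P:X U K sticky cond} concerns genuine time intervals, on which $X$ is forced to be piecewise constant, and then the Tanaka--Meyer formula gives constancy of the local time directly. The remaining work — the step-function observation and the endpoint bookkeeping via continuity of $t\mapsto L^{x_i}_t$ — is routine.
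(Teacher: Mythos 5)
Your proof is correct and takes essentially the same route as the paper: the paper's entire proof is the one-line application of Proposition~\ref{P:X U K sticky} with $K=\{x_1,\ldots,x_n\}$ and $U=L^{x_1}+\cdots+L^{x_n}$, exactly as you do. The verification of hypothesis~\eqref{P:X U K sticky cond} that you spell out (support of $dL^{x_i}$, piecewise constancy of $X$ while it sits in the finite set $K$, and the Tanaka--Meyer cancellation) is left implicit in the paper and is sound.
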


\begin{proof}
Apply Proposition~\ref{P:X U K sticky} with $K=\{x_1,\ldots,x_n\}$ and $U=L^{x_1}+\cdots+L^{x_n}$.
\end{proof}

\begin{example}
Without the stickiness assumption on $X$, there is no guarantee that its local time is sticky. Indeed, let $W$ be Brownian motion, which is not globally sticky. Its local time $L^0_t(W)$ tends to infinity with $t$, so that $\bigwedge[L^0_\infty\mid\Fcal_t]=\infty\ne L^0_t(W)$. This can be turned into an example on a finite time horizon as follows. Let $\tau=\inf\{t\ge0\colon L^0_t(W)\ge 1\}$, which is a finite stopping time. Define $X_t = W_{(t/(1-t))\wedge\tau}$ for $t\in[0,1]$, which for $t=1$ should be read $X_1=W_\tau$. Then $X$ is a semimartingale with respect to the time-changed filtration, and its local time is given by $L^0_t(X)=L^0_{(t/(1-t))\wedge\tau}(W)$. In particular, one has $\bigwedge[L^0_1(X)]=1\ne 0=L^0_0(X)$.
\end{example}

For the next result we assume that $\Xcal$ is an open connected subset of $\R^n$ and that $X$ has continuous paths. For any deterministic times $\tau\le T<\infty$, the restriction $X|_{[\tau,T]}=(X_t)_{t\in[\tau,T]}$ is a random element of the space $C([\tau,T];\Xcal)$ of all continuous maps $[\tau,T]\to \Xcal$, equipped with the uniform metric. The process $X$ is said to have {\em conditional full support} if for any choice of deterministic times $0\le\tau<T$, the support of the regular conditional distribution of $X|_{[\tau,T]}$ given $\Fcal_\tau$ is almost surely all of $C_{X_\tau}([\tau,T];\Xcal)$, the closed subset of $C([\tau,T];\Xcal)$ whose paths are equal to $X_\tau$ at time $\tau$. The notion of conditional full support plays an important role in mathematical finance, and implies the stickiness property; see e.g.~\citet{BPS:15}.

\begin{proposition} \label{P:int f sticky}
Let $f:\Xcal\to\R_+$ be a nonnegative continuous function with $0\in f(\Xcal)$. If $X$ has conditional full support, then the process $U$ given by
\[
U_t = \int_0^t f(X_s) ds
\]
is also sticky.
\end{proposition}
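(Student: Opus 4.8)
The plan is to verify the stickiness of $U_t = \int_0^t f(X_s)\,ds$ directly from the definition, by showing $\P(U_\infty \le U_\tau + \varepsilon \mid \Fcal_\tau) > 0$ for every stopping time $\tau$ (after stopping at some deterministic $T$, since we only claim sticky, not globally sticky) and every $\Fcal_\tau$-measurable $\varepsilon > 0$. The key point is to exploit the conditional full support hypothesis: we want to produce, with positive conditional probability, a continuation of the path that drives $X$ quickly into the region where $f$ is small and keeps it there long enough that the remaining integral $\int_\tau^T f(X_s)\,ds$ is at most $\varepsilon$.

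First I would reduce to a deterministic time horizon: replace $X$ by $X^T$, so $U_\infty = U_T = \int_0^T f(X_s)\,ds$ and $U_\infty - U_\tau = \int_\tau^T f(X_s)\,ds$ on $\{\tau < \infty\}$ (and on $\{\tau = \infty\}$ there is nothing to prove, as noted after Definition~\ref{D:sticky}). By a standard approximation argument (conditioning on the countably many values, passing to a limit of dyadic approximations as in the proof of Lemma~\ref{L:optional sampling}), it suffices to treat deterministic $\tau$, so fix $0 \le \tau < T$. Since $0 \in f(\Xcal)$, pick $x_0 \in \Xcal$ with $f(x_0) = 0$; by continuity of $f$ and since $\Xcal$ is open, there is $r > 0$ with $\overline{B}(x_0,r) \subseteq \Xcal$ and $f < \eta$ on $\overline{B}(x_0,r)$, where $\eta$ is chosen later. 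Because $\Xcal$ is open and connected, it is path-connected, so for each realization of $X_\tau$ there is a continuous path in $\Xcal$ from $X_\tau$ to $x_0$; the subtlety is that this path must be chosen in an $\Fcal_\tau$-measurable way and must stay in $\Xcal$. I would handle this by covering $\Xcal$ with countably many open balls and building a measurable selection of a polygonal path, or alternatively invoke that $C_{X_\tau}([\tau,T];\Xcal)$ has full support and argue that the specific open neighborhood of paths described below has positive measure.

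The core estimate: let $G \subseteq C_{X_\tau}([\tau,T];\Xcal)$ be the set of continuous paths $\gamma$ with $\gamma(\tau) = X_\tau$ such that $\gamma(s) \in \overline{B}(x_0,r)$ for all $s \in [\tau + \delta, T]$, where $\delta \in (0, T - \tau)$ is small, and such that $\gamma$ stays inside a fixed compact subset $K_\tau \subseteq \Xcal$ on $[\tau, \tau+\delta]$ (e.g.\ a tube around the chosen connecting path). This $G$ contains a (uniform-)open subset of $C_{X_\tau}([\tau,T];\Xcal)$, namely a small uniform ball around the connecting-path-then-sit-at-$x_0$ trajectory, and hence has strictly positive conditional probability by conditional full support. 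On the event $\{X|_{[\tau,T]} \in G\}$ one has
\[
U_T - U_\tau = \int_\tau^{\tau+\delta} f(X_s)\,ds + \int_{\tau+\delta}^T f(X_s)\,ds \le \delta \cdot M_\tau + (T-\tau)\,\eta,
\]
where $M_\tau = \sup_{x \in K_\tau} f(x) < \infty$. Choosing first $\eta$ so that $(T-\tau)\eta \le \varepsilon/2$ on a suitable $\Fcal_\tau$-measurable partition of $\Omega$ (discretizing $\varepsilon$), then $\delta$ small enough that $\delta M_\tau \le \varepsilon/2$, we get $U_T - U_\tau \le \varepsilon$ on a set of positive conditional probability, which is exactly what \eqref{eq:U sticky} requires. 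To make the choice of $r$, $\delta$, $\eta$, $K_\tau$ genuinely $\Fcal_\tau$-measurable, I would first condition on the event $\{\varepsilon > 1/k\} \cap \{X_\tau \in B_j\}$ for $k \in \N$ and $B_j$ ranging over a countable base of relatively compact open subsets of $\Xcal$, handle each such event with deterministic constants, and then patch.

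The main obstacle I anticipate is the measurable path-selection step — guaranteeing that the ``favorable tube'' of paths $G$ is both genuinely open in $C_{X_\tau}([\tau,T];\Xcal)$ (so that conditional full support gives it positive mass) and depends measurably on $\omega$ through $X_\tau$ alone, while respecting that $\Xcal$ may be a complicated open connected set (so the connecting path cannot be taken to be a straight segment). The cleanest route is probably to avoid explicit path construction: fix a countable base $\{B_j\}$ of open balls with $\overline{B_j} \subseteq \Xcal$, and on $\{X_\tau \in B_j\}$ use that, by openness and connectedness, one can chain finitely many such balls $B_j = B_{j_0}, B_{j_1}, \ldots, B_{j_m}$ with $B_{j_m} \ni x_0$ and consecutive balls overlapping; the set of paths that traverse this fixed finite chain and then remain near $x_0$ is a fixed nonempty open subset of $C_{x}([\tau,T];\Xcal)$ for every $x \in B_j$, hence has positive conditional probability on $\{X_\tau \in B_j\}$ by conditional full support. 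Everything else is the routine estimate above plus the discretization of $\varepsilon$ and the reduction from general $\tau$ to deterministic $\tau$.
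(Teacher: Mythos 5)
Your proposal is correct and follows the same core strategy as the paper: reduce to deterministic $\tau<T$ (the paper cites \citet[Lemma~3.1]{BPS:15} and \citet[Lemma~2.1]{RS:16} for exactly this step, which you may as well do rather than re-deriving it), then use conditional full support to give positive conditional mass to an open set of paths that move quickly to a point where $f$ vanishes and stay where $f$ is small, and split the integral at the arrival time. The genuine difference is how the two arguments handle the $\omega$-dependence of the construction, which is precisely the obstacle you flag. The paper works under the regular conditional distribution of $(\varepsilon, X|_{[\tau,T]})$ given $\Fcal_\tau$, under which $X_\tau$ and $\varepsilon$ are a.s.\ Dirac and hence can be treated as deterministic; the connecting path $\gamma$, the level $m=\max_s f(\gamma(s))$, and the open set $G$ are then chosen after conditioning, so no measurable selection, chaining over a countable base, or discretization of $\varepsilon$ is needed. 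The paper also defines $G$ by sublevel conditions on $f$ along the path ($f(x(t))<m+\varepsilon$ everywhere and $f(x(t))<\varepsilon/(2T)$ after a suitably chosen $\sigma$) rather than by metric tubes, which makes openness and the bound $U_T-U_\tau\le \frac{\varepsilon/2}{m+\varepsilon}(m+\varepsilon)+T\cdot\frac{\varepsilon}{2T}=\varepsilon$ immediate, without invoking compact tubes or bounds like $M_\tau$. Your partition over $\{\varepsilon>1/k\}\cap\{X_\tau\in B_j\}$ with a fixed finite chain of base balls is a valid (if more laborious) substitute, so your route works; the conditioning trick simply buys you all of that bookkeeping for free.
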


\begin{proof}
We must show that for any $T<\infty$, any stopping time $\tau\le T$, and any strictly positive $\Fcal_\tau$-measurable $\varepsilon$, we have $\P(U_T\le U_\tau+\varepsilon\mid\Fcal_\tau)>0$. By \citet[Lemma~3.1]{BPS:15}, it suffices to take $\tau<T$ deterministic; see also \citet[Lemma~2.1]{RS:16}. Consider the regular conditional distribution of $(\varepsilon,X|_{[\tau,T]})$ given $\Fcal_\tau$, under which $X_\tau$ and $\varepsilon$ are both Dirac distributed almost surely and therefore can be treated as being deterministic. Let $\gamma\colon [0,1]\to \Xcal$ be a continuous map with $\gamma(0)=X_\tau$ and $f(\gamma(1))=0$. Such a map exists since $\Xcal$ is connected and $0\in f(\Xcal)$. Let $m=\max_{s\in[0,1]}f(\gamma(s))$ be the largest value that $f$ attains along $\gamma$.

Now define the set $G\subset C_{X_\tau}([\tau,T];\Xcal)$ to consist of all $\Xcal$-valued continuous paths $x\colon [\tau,T]\to \Xcal$ with $x(\tau)=X_\tau$ satisfying the following two properties:
\begin{enumerate}
\item $f(x(t)) < m+\varepsilon$ for all $t\in[\tau,T]$.
\item $f(x(t)) < \varepsilon/(2T)$ for all $t\in[\sigma,T]$, where we define $\sigma\in(\tau,T)$ by
\[
\sigma=\tau + \frac{\varepsilon/2}{m + \varepsilon} \wedge \frac{T-\tau}{2}.
\]
\end{enumerate}
Then $G$ is open, being the intersection of two open sets. Moreover, $G$ is nonempty since it contains the path $(x(t))_{t\in[\tau,T]}$ given by
\[
x(t) = \gamma\left( 1\wedge\frac{t-\tau}{\sigma-\tau}\right).
\]
The conditional full support property therefore implies that the event $\{X|_{[\tau,T]}\in G\}$ has strictly positive regular conditional probability. On the other hand, whenever $X|_{[\tau,T]}$ remains in $G$ one has
\[
U_T - U_\tau = \int_\tau^\sigma f(X_t) dt + \int_\sigma^T f(X_t)dt \le \frac{\varepsilon/2}{m+\varepsilon}\times(m+\varepsilon) + T\times \frac{\varepsilon}{2T} = \varepsilon.
\]
In conclusion, one has $\P(U_T-U_\tau \le \varepsilon\mid\Fcal_\tau)>0$ as required.
\end{proof}

\section{Martingales and supermartingales} \label{S:martingales}

Martingales are not always sticky: one example is the martingale $M_t=\P(W_1>0\mid\Fcal_t)$ where $W$ is Brownian motion. This martingale starts at $1/2$ and converges to zero or one at time $t=1$. Therefore it does not remain in any interval around $1/2$ of width strictly less than $1/2$. Nonetheless, certain functionals of martingales are necessarily sticky, and consequently satisfy the ``inf-martingale'' property~\ref{T:NCR:1} of Theorem~\ref{T:NCR}. This includes the running maximum process $\overline M_t=\sup_{s\le t}M_s$ as well as the local time processes.

The following result is an immediate consequence of Theorem~\ref{T:NCR}. Recall that a c\`adl\`ag supermartingale $M$ is {\em closed on the right} if there exists an integrable random variable $X$ such that $M_t\ge \E[X\mid\Fcal_t]$ for all $t\ge0$. For instance, this holds if $M$ is nonnegative or, more generally, if $\{M_t^-\colon t\ge0\}$ is a uniformly integrable family; see VI.8 in~\citet{DM:82}.

\begin{proposition} \label{P:maxM}
Let $M$ be a c\`adl\`ag supermartingale, possibly after an equivalent change of probability measure. Then $\overline M$ is sticky, that is,
\[
\overline M_t = \bigwedge[ \overline M_T \mid\Fcal_T], \qquad t\le T<\infty.
\]
If additionally $M$ is closed on the right, then $\overline M$ is globally sticky, that is,
\[
\overline M_t = \bigwedge[ \overline M_\infty \mid\Fcal_t], \qquad t\ge0.
\]
\end{proposition}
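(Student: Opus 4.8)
The plan is to invoke Theorem~\ref{T:NCR} via characterization~\ref{T:NCR:2}, which requires only that we rule out ``sure improvements'' of the running maximum. First I would dispose of the finite-horizon statement. Fix $T<\infty$. The process $\overline M$ restricted to $[0,T]$ is adapted, nondecreasing and right-continuous, and $\overline M_T=\sup_{t\le T}\overline M_t$. Suppose $\tau\le T$ is a stopping time and $Y$ is an $\Fcal_\tau$-measurable random variable with $\overline M_\tau < Y \le \overline M_T$ on $\{\tau<\infty\}=\{\tau\le T\}$. On this event, since $\overline M_\tau<Y$ is $\Fcal_\tau$-measurable and $\overline M$ cannot exceed $Y$ by time $\tau$, the supermartingale $M$ must, on the $\Fcal_\tau$-measurable set where $\P(\tau\le T)$ charges, eventually rise strictly above the level $\overline M_\tau$ — in fact above any level strictly below $Y$. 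But $M_\tau \le \overline M_\tau < Y$, so on $\{\tau<\infty\}$ the supermartingale $M$ started from $M_\tau$ is forced, with conditional probability one given $\Fcal_\tau$, to reach the $\Fcal_\tau$-measurable level $\tfrac12(\overline M_\tau+Y)>M_\tau$ at some later time $\le T$. This contradicts the supermartingale property: by optional stopping applied to the stopping time $\rho=\inf\{t\ge\tau: M_t\ge \tfrac12(\overline M_\tau+Y)\}\wedge T$, one would get $\E[M_\rho\mid\Fcal_\tau]\ge \tfrac12(\overline M_\tau+Y)>M_\tau$ on a set of positive probability, whereas the supermartingale property gives $\E[M_\rho\mid\Fcal_\tau]\le M_\tau$. (Here one must be slightly careful that $\rho$ may fail to attain the level because of a negative jump; but a strict increase of $\overline M$ past the level $\tfrac12(\overline M_\tau+Y)$ forces $\sup_{[\tau,T]}M > \tfrac12(\overline M_\tau + Y)$, hence $M$ does cross or jump over the level, and the optional stopping argument goes through with $\rho$ the first time $M\ge \tfrac12(\overline M_\tau+Y)$, using that $M_\rho \ge \tfrac12(\overline M_\tau + Y)$ when $\rho<\infty$ for a supermartingale only if $M$ has no upward jumps — so instead one should take $\rho$ to be the first time $M$ enters $[\tfrac12(\overline M_\tau+Y),\infty)$ and note $M_\rho\ge\tfrac12(\overline M_\tau+Y)$ holds at such $\rho$ regardless.) Thus $\P(\tau<\infty)=0$, condition~\ref{T:NCR:2} holds, and Theorem~\ref{T:NCR} gives $\overline M_t = \bigwedge[\overline M_T\mid\Fcal_t]$ for all $t\le T$.

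Next I would observe that the invariance of the conditional-infimum condition under equivalent measure change — emphasized in the discussion following Theorem~\ref{T:NCR} — means it suffices to have $M$ a supermartingale under \emph{some} equivalent measure, which is exactly the hypothesis. So the displayed conclusion for $t\le T<\infty$ follows verbatim.

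For the global statement, assume additionally that $M$ is closed on the right, so there is an integrable $X$ with $M_t\ge\E[X\mid\Fcal_t]$ for all $t$. Then $\overline M_\infty=\sup_{t\ge0}\overline M_t$ exists in $\overline\R$, and I would again verify~\ref{T:NCR:2}, now with $\tau$ an arbitrary stopping time (possibly infinite) and $Y$ an $\Fcal_\tau$-measurable variable with $\overline M_\tau<Y\le\overline M_\infty$ on $\{\tau<\infty\}$. First note $\overline M_\infty$ is a.s.\ finite: the family $\{M_t^-\}$ being dominated in the closed-on-the-right sense, $M$ converges a.s.\ to an integrable limit, and a supermartingale that converges cannot have running maximum $+\infty$ on a set of positive probability (if it did, one could localize and contradict the supermartingale inequality as above). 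On $\{\tau<\infty\}$ with $M_\tau\le\overline M_\tau<Y$, the closed-on-the-right supermartingale $(M_t)_{t\ge\tau}$ would have to cross the $\Fcal_\tau$-measurable level $c=\tfrac12(\overline M_\tau+Y)>M_\tau$ with conditional probability one (since $\overline M_\infty\ge Y>c$), and applying optional stopping to $\rho=\inf\{t\ge\tau:M_t\ge c\}$ — legitimate because $M-\E[X\mid\Fcal_\bullet]$ is a nonnegative supermartingale, hence $M_\rho$ is well-defined and $\E[M_\rho\mid\Fcal_\tau]\le M_\tau$ even when $\rho=\infty$, using Fatou and $M_\rho\ge c$ on $\{\rho<\infty\}$, while $M_\infty\ge\E[X\mid\Fcal_\infty]$ on $\{\rho=\infty\}$ — one derives $M_\tau\ge \E[M_\rho\mid\Fcal_\tau]\ge c\,\P(\rho<\infty\mid\Fcal_\tau) + \E[M_\infty\bm 1_{\rho=\infty}\mid\Fcal_\tau]$, and since $\{\rho<\infty\}$ has conditional probability one this gives $M_\tau\ge c>M_\tau$, a contradiction, forcing $\P(\tau<\infty)=0$. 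Then Theorem~\ref{T:NCR} yields $\overline M_t=\bigwedge[\overline M_\infty\mid\Fcal_t]$ for all $t\ge0$.

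The main obstacle I anticipate is the measure-theoretic bookkeeping around optional stopping at a possibly infinite time for a merely closed-on-the-right (not uniformly integrable) supermartingale, and the handling of downward jumps of $M$ at the crossing time $\rho$. The clean way around the latter is to phrase the ``sure improvement'' contradiction not through $M$ crossing a level exactly, but through the observation that a strict increase of $\overline M$ beyond the $\Fcal_\tau$-measurable level $c$ forces $\esssup$ of $M$ over $[\tau,\infty)$ to be $\ge c$, hence $M$ enters $[c,\infty)$; one then stops at the entrance time, where the value is automatically $\ge c$. For the infinite-horizon integrability one leans on the cited VI.8 in~\citet{DM:82} for the a.s.\ convergence of closed-on-the-right supermartingales, which also gives finiteness of $\overline M_\infty$. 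With these points handled, both assertions are immediate consequences of Theorem~\ref{T:NCR} together with its equivalent-measure invariance.
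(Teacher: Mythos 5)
Your proposal is correct and takes essentially the same route as the paper's proof: reduce via equivalent-measure invariance and right-closure (stopping at $T$ for the finite-horizon case), then verify condition~(ii) of Theorem~\ref{T:NCR} by stopping $M$ at its first entrance into $[c,\infty)$ for an $\Fcal_\tau$-measurable level $c$ strictly between $\overline M_\tau$ and $Y$, and contradict the optional-sampling inequality for (right-closed) supermartingales. The differences are cosmetic: the paper stops at the level $Y$ itself and concludes with a single unconditional expectation inequality, whereas you stop at the midpoint and argue conditionally on $\Fcal_\tau$.
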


\begin{proof}
We apply the theory of Section~\ref{S:CI} with the complete lattice $\overline\R=[-\infty,\infty]$ in Example~\ref{E:CI 1}. Since the desired conclusion is invariant under equivalent changes of probability measure, we may suppose $M$ is already a supermartingale. We may also suppose it is closed on the right, since we otherwise replace $M$ by $M^T$. The result now follows from Theorem~\ref{T:NCR} with $U=\overline M$, once condition~\ref{T:NCR:2} of the theorem is verified. Thus, consider any stopping time~$\tau$ such that $\overline M_\tau < Y$ on $\{\tau<\infty\}$ for some $\Fcal_\tau$-measurable random variable $Y \le \overline M_\infty$. Define $\sigma=\inf\{t>\tau\colon M_t\ge Y\}$. Then
\[
0 \ge \E[ M_\sigma - M_\tau ] = \E[ (M_\sigma - M_\tau)\bm 1_{\{\tau<\infty\}} ] \ge \E[ (Y - \overline M_\tau)\bm 1_{\{\tau<\infty\}} ] \ge 0.
\]
Therefore $\E[ (Y - \overline M_\tau)\bm 1_{\{\tau<\infty\}} ]=0$, and we deduce $\P(\tau<\infty)=0$, as required.
\end{proof}

An interesting consequence of Proposition~\ref{P:maxM} is that it allows to reconstruct any nonnegative local martingale $M$ from the pair $(M_\infty,\overline M_\infty)$. For uniformly integrable martingales this is obvious, since $M_t=\E[M_\infty\mid\Fcal_t]$ for all $t\ge0$. For general nonnegative local martingales the result is less obvious and even counterintuitive (at least to the author); in particular, many such local martingales satisfy $M_\infty=0$, in which case the global maximum $\overline M_\infty$ alone contains the same information as the entire process.

To reconstruct $M$ from $(M_\infty,\overline M_\infty)$, simply observe that a reducing sequence for $M$ is given by the crossing times $\tau_n=\inf\{t\ge0:\overline M_t\ge n\}$, so that
\[
M_{t\wedge\tau_n} = \E[M_{\tau_n}\mid\Fcal_t] = \E[\overline M_{\tau_n}\bm 1_{\{\tau_n<\infty\}} + M_\infty\bm 1_{\{\tau_n=\infty\}}\mid\Fcal_t].
\]
Thus $M_t=\lim_{n\to\infty}M_{t\wedge\tau_n}$ is determined by $(M_\infty,\overline M)$, which by Proposition~\ref{P:maxM} is determined by $(M_\infty,\overline M_\infty)$.

In fact, a stronger statement is true: it is enough to know only the very largest values of $\overline M_\infty$, in the following sense.

\begin{proposition} \label{P:maxM X}
Let $M$ be a nonnegative local martingale and let $X$ be any bounded random variable. Then $M$ can be reconstructed from the pair $(M_\infty, X\vee \overline M_\infty)$.
\end{proposition}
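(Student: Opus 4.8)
The plan is to imitate the reconstruction of $M$ from $(M_\infty,\overline M_\infty)$ sketched just before the statement, but with the running maximum $\overline M$ replaced by a truncated version that is insensitive to the passage from $\overline M_\infty$ to $X\vee\overline M_\infty$. First I would fix a constant $c\ge 0$ with $|X|\le c$ almost surely. Since $c\vee X=c$, one has $c\vee(X\vee\overline M_\infty)=(c\vee X)\vee\overline M_\infty=c\vee\overline M_\infty$, so the pair $(M_\infty,X\vee\overline M_\infty)$ determines $(M_\infty,c\vee\overline M_\infty)$; it therefore suffices to reconstruct $M$ from this latter pair.

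The second step is to show that the process $c\vee\overline M=(c\vee\overline M_t)_{t\ge0}$ is globally sticky and hence, by Corollary~\ref{C:sticky}, equals $\bigwedge[c\vee\overline M_\infty\mid\Fcal_t]$ for its regular version. Note first that $c\vee\overline M$ is adapted, nondecreasing, right-continuous, and $\R$-valued: indeed $M\ge0$ is a supermartingale, so the maximal inequality gives $\overline M_\infty=\sup_{t\ge0}M_t<\infty$ almost surely. Next, $\overline M$ itself is globally sticky by Proposition~\ref{P:maxM} (which applies since $M\ge0$ is a supermartingale closed on the right, with no change of measure needed), so by Corollary~\ref{C:sticky}, $\P(\overline M_\infty\le\overline M_\tau+\varepsilon\mid\Fcal_\tau)>0$ for every stopping time $\tau$ and $\Fcal_\tau$-measurable $\varepsilon>0$. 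On the event $\{\overline M_\infty\le\overline M_\tau+\varepsilon\}$ one also has $c\vee\overline M_\infty\le c\vee(\overline M_\tau+\varepsilon)\le(c\vee\overline M_\tau)+\varepsilon$, so the same conditional positivity holds with $c\vee\overline M$ in place of $\overline M$. Thus $c\vee\overline M$ is globally sticky, and the whole process $c\vee\overline M$ (up to evanescence) is recovered from $c\vee\overline M_\infty$.

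Finally I would redo the crossing-time reconstruction using only the tail of the reducing sequence. The crossing times $\tau_n=\inf\{t\ge0\colon\overline M_t\ge n\}$, $n\in\N$, form a reducing sequence for $M$, as recalled before the statement; since $\tau_n\uparrow\infty$, the subsequence indexed by integers $n>c$ is still a reducing sequence, and for such $n$ one has $\tau_n=\inf\{t\ge0\colon c\vee\overline M_t\ge n\}$, so $\tau_n$ is a functional of the process $c\vee\overline M$, while on $\{\tau_n<\infty\}$ the relation $\overline M_{\tau_n}\ge n>c$ gives $\overline M_{\tau_n}=(c\vee\overline M)_{\tau_n}$. Hence, exactly as in the argument preceding the statement,
\[
M_{t\wedge\tau_n}=\E\bigl[(c\vee\overline M)_{\tau_n}\bm 1_{\{\tau_n<\infty\}}+M_\infty\bm 1_{\{\tau_n=\infty\}}\mid\Fcal_t\bigr]
\]
is determined by $(M_\infty,c\vee\overline M)$, and letting $n\to\infty$ along integers $n>c$ yields $M_t=\lim_n M_{t\wedge\tau_n}$, also determined by $(M_\infty,c\vee\overline M)$; combined with the first two steps this shows that $M$ is determined by $(M_\infty,X\vee\overline M_\infty)$.

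The point of the argument — and the only mildly delicate point — is the observation in Steps~1 and~3 that although $\overline M_\infty\mapsto X\vee\overline M_\infty$ erases all information about the small values of $\overline M_\infty$, those values never enter the reconstruction: only the running maximum at levels above the bound $c$ on $X$ is used, through the tail $(\tau_n)_{n>c}$ of the reducing sequence. Everything else is a transcription of the $(M_\infty,\overline M_\infty)$ argument, using that $x\mapsto c\vee x$ preserves both the order and the stickiness inequality.
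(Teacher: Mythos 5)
Your proof is correct, and it reaches the conclusion by a slightly different route than the paper. The paper works directly with the process $V_t=\bigwedge[X\vee\overline M_\infty\mid\Fcal_t]$ and uses the lattice identity of Lemma~\ref{L:PCI}\ref{PCI:max linear} (with the order indicator $\chi_A$ of the event $A=\{V_t\ge n\}$) together with Proposition~\ref{P:maxM} to show $V_t=\overline M_t$ on $\{V_t\ge n\}$ for $n>c$, which identifies the crossing times and the values $V_{\tau_n}=\overline M_{\tau_n}$ needed for the reducing-sequence formula. You instead first discard the randomness of $X$ via the observation $c\vee(X\vee\overline M_\infty)=c\vee\overline M_\infty$, and then recover the whole truncated process $c\vee\overline M$ from its final value by checking global stickiness of $c\vee\overline M$ directly (using stickiness of $\overline M$ from Proposition~\ref{P:maxM} and the elementary bound $c\vee(a+\varepsilon)\le(c\vee a)+\varepsilon$) and invoking Corollary~\ref{C:sticky}; the reducing-sequence step for $n>c$ is then identical to the paper's. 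What your version buys is that it avoids Lemma~\ref{L:PCI}\ref{PCI:max linear} and the $\chi_A$ manipulation entirely, replacing them with a short stickiness verification for a concrete real-valued process; what the paper's version buys is that it never needs to introduce the truncation $c\vee\overline M$ as a new sticky process, reading everything off from the conditional infimum of the given data $X\vee\overline M_\infty$. Your auxiliary remarks (nonnegative local martingales are supermartingales closed on the right, $\overline M_t<\infty$ at finite times, $\tau_n\uparrow\infty$, and $\overline M_{\tau_n}=(c\vee\overline M)_{\tau_n}$ on $\{\tau_n<\infty\}$ since $\overline M_{\tau_n}\ge n>c$) are all accurate, so there is no gap.
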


\begin{proof}
Define $V_t=\bigwedge[X\vee \overline M_\infty\mid\Fcal_t]$ and $\tau_n=\inf\{t\ge0\colon V_t\ge n\}$. Let $c\ge X$ be a deterministic upper bound on $X$. We claim that $\overline M_t=V_t$ on $A=\{V_t\ge n\}$ for any $n>c$ and any $t\ge0$. To see this, note that $X<V_t\le X\vee\overline M_\infty$ on $A$ and hence $X<\overline M_\infty$ on $A$. Thus by Lemma~\ref{L:PCI}\ref{PCI:max linear} and Proposition~\ref{P:maxM},
\[
V_t \vee\chi_A = \bigwedge[X\vee \overline M_\infty \vee \chi_A \mid\Fcal_t] = \bigwedge[\overline M_\infty \vee \chi_A \mid\Fcal_t] = \bigwedge[\overline M_\infty \mid\Fcal_t] \vee \chi_A = \overline M_t\vee\chi_A.
\]
This proves that $\overline M_t=V_t$ on $\{V_t\ge n\}$, as claimed. In conjunction with the inequality $\overline M_t\le V_t$, this implies that $\tau_n=\inf\{t\ge0:\overline M_t\ge n\}$ and $V_{\tau_n}=\overline M_{\tau_n}$ on $\{\tau_n<\infty\}$ for all $n>c$. The argument preceding the theorem now yields the desired result.
\end{proof}

The fact that a nonnegative local martingale $M$ can be reconstructed from the pair $(M_\infty, \overline M_\infty)$ can be deduced from results that already exist in the literature, under the additional assumption that $\overline M$ is continuous. For example, assuming without loss of generality that $M_\infty=0$, a conditional version of an argument by \citet{ELY:97} shows that
\begin{equation} \label{eq:ELYeq}
M_t=\lim_{n\to\infty}n\,\P(\overline M_\infty\ge n\mid\Fcal_t).
\end{equation}
An alternative argument is based on the following identity due to \citet{NY:06}, where it is additionally assumed that $M_0=1$ and $M>0$:
\begin{equation} \label{eq:NYeq}
\E[f(\overline M_\infty)\mid\Fcal_t] =  f(\overline M_t)\left(1 - \frac{M_t}{\overline M_t}\right) + M_t \int_{\overline M_t}^\infty \frac{f(x)}{x^2}dx
\end{equation}
for any positive or bounded Borel function $f$. Choosing for $f$ functions $f_n$ such that $f_n=0$ on $(-\infty,n]$ and $\int_n^\infty x^{-2}f_n(x)dx=1$, the right-hand side of~\eqref{eq:NYeq} becomes equal to $M_t$ as soon as $n$ exceeds $\overline M_t$. This shows that
\[
M_t = \lim_{n\to\infty} \E[f_n(\overline M_\infty)\mid\Fcal_t],
\]
which shows that $M_t$ can be recovered from $\overline M_\infty$.

Note that \eqref{eq:ELYeq} crucially relies on the assumption that $\overline M$ is continuous. Indeed, \citet[Example~3.2]{HR:15} construct a nonnegative martingale $M$, with very large but unlikely upward jumps, such that $M_0=1$, $M_\infty=0$, and
\[
\lim_{n\to\infty} n\,\P(\overline M_\infty \ge n) = 0.
\]
This is inconsistent with~\eqref{eq:ELYeq}. The continuity of $\overline M$ is similarly crucial for \eqref{eq:NYeq}.

Our next result shows that another interesting functional, namely the local time process of a local martingale, is always sticky.

\begin{proposition}
Let $M$ be a local martingale, and let $L^x$ denote its local time at level $x$. Then $L^x$ is sticky, that is,
\[
L^x_t = \bigwedge[L^x_T\mid\Fcal_t], \qquad t\le T<\infty.
\]
\end{proposition}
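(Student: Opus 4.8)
The plan is to fix $T<\infty$, regard $U=(L^x_{t\wedge T})_{t\ge0}$ as an adapted nondecreasing right-continuous $\overline\R$-valued process with $U_\infty=L^x_T$, and verify condition~\ref{T:NCR:2} of Theorem~\ref{T:NCR}; the claimed identity then follows (equivalently one may invoke Corollary~\ref{C:sticky}, noting $L^x_T<\infty$ a.s.\ as the local time of a semimartingale). One may assume $M$ is continuous, the general case being handled by the Tanaka--Meyer formula with the extra jump terms playing no adverse role. So let $\tau$ be a stopping time and $Y$ an $\Fcal_\tau$-measurable random variable with $Y\le L^x_T$ and $L^x_{\tau\wedge T}<Y$ on $\{\tau<\infty\}$; we must show $\P(\tau<\infty)=0$. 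Since $U$ is constant after $T$, the event $\{T<\tau<\infty\}$ is automatically null, so we may assume $\tau\le T$ on $\{\tau<\infty\}$.

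Set $\sigma=\inf\{t\ge\tau\colon L^x_t\ge Y\}$ (with $\sigma=\infty$ where $\tau=\infty$). On $\{\tau<\infty\}$ we have $L^x_\tau<Y\le L^x_T$, so $\tau<\sigma\le T$ and, by continuity of $L^x$, $L^x_\sigma=Y$. Crucially, $M_\sigma=x$ there: since $L^x$ increases on every left neighbourhood of $\sigma$ and $\d L^x$ is carried by $\{M=x\}$, there are times $s\uparrow\sigma$ with $M_s=x$, and continuity of $M$ gives $M_\sigma=x$; in particular $(M_\sigma-x)^-=0$. Tanaka's formula says that $N_t:=(M_t-x)^--\tfrac12 L^x_t$ is a continuous local martingale. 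Granting for the moment that optional sampling applies between $\tau$ and $\sigma$, i.e.\ $\E[N_\sigma\mid\Fcal_\tau]=N_\tau$, we obtain on $\{\tau<\infty\}$
\[
0=\E[(M_\sigma-x)^-\mid\Fcal_\tau]=(M_\tau-x)^-+\tfrac12\E[L^x_\sigma-L^x_\tau\mid\Fcal_\tau]\ge\tfrac12(Y-L^x_\tau)>0,
\]
a contradiction; hence $\P(\tau<\infty)=0$ and \ref{T:NCR:2} holds.

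The main obstacle is justifying this optional-sampling step: for a general local martingale the process $N$ stopped at $\sigma$ need not be of class~D, so Doob's theorem does not apply directly. The remedy is localization. Stopping $N$ also at $\rho_k=\inf\{t\colon L^x_t\ge k\}\wedge\inf\{t\colon|M_t-x|\ge k\}$ makes $N^{\rho_k}$ bounded, hence a uniformly integrable martingale, and $\rho_k\to\infty$ almost surely since $L^x_T<\infty$ and $M$ has continuous paths. Writing $\{\tau<\infty\}=\bigcup_k A_k$ with $A_k=\{\tau<\infty\}\cap\{L^x_\tau<k\}\cap\{Y<k\}\in\Fcal_\tau$, one has $\sigma<\rho_k\wedge T$ on $A_k$ for all large $k$ except on the event where $M$ descends below $x-k$ before time $\sigma$; running the displayed argument on $A_k$ with $\sigma\wedge\rho_k$ in place of $\sigma$ and controlling that exceptional event by a tail estimate on the downward excursion of $M$ yields $\P(A_k)=0$. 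Producing that tail estimate — equivalently, ruling out that $M$ is \emph{forced} to return to level $x$ and accumulate the prescribed amount of local time before time $T$ — is the technical heart of the proof, and is where one must use more than Tanaka's formula alone, namely that the continuous, adapted quadratic variation of $M$ cannot anticipate the future.
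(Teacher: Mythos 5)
Your reduction to condition~\ref{T:NCR:2} of Theorem~\ref{T:NCR} (equivalently Corollary~\ref{C:sticky}) is the right start, and the Tanaka-formula idea is sound for a \emph{true} martingale; but as written there is a genuine gap, which you flag yourself: the optional-sampling step $\E[N_\sigma\mid\Fcal_\tau]=N_\tau$ for $N=(M-x)^--\tfrac12 L^x$ is never justified when $M$ is only a local martingale. Your localization $\rho_k$ does make $N^{\rho_k}$ a bounded martingale, but on $\{\rho_k<\sigma\}$ the term $(M_{\sigma\wedge\rho_k}-x)^-$ can be of order $k$ and can swamp the gain $\tfrac12(Y-L^x_\tau)$; the ``tail estimate on the downward excursion'' needed to kill that event is precisely the substance of the proposition and is never supplied --- appealing to ``the quadratic variation cannot anticipate the future'' is not an argument. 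In addition, the one-sentence reduction to continuous $M$ is unjustified: in the c\`adl\`ag case the Meyer--Tanaka formula carries an extra nondecreasing jump term (harmless in itself, since only the submartingale inequality $\E[N_\sigma\mid\Fcal_\tau]\ge N_\tau$ is needed for your contradiction), but the key identity $(M_\sigma-x)^-=0$ can fail: the support property of $\d L^x$ only gives $M_{\sigma-}=x$, and $M$ may jump at $\sigma$.

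The paper's proof avoids both problems. It first localizes so that $M$ is a true martingale --- after which your argument would in fact close, since then $(M-x)^-$ is of class (D) on $[0,T]$ and $\E[L^x_T]<\infty$, making $N$ stopped at $\sigma\wedge T$ uniformly integrable. But instead of Tanaka it argues directly: with $\rho_n=\inf\{t\ge\tau\colon|M_t-x|\ge n^{-1}\}\wedge T$ and $\upsilon$ the next visit to $x$ before $T$, the fact that $\d L^x$ is carried by $\{M_{t-}=M_t=x\}$ shows that a conditionally sure increase of $L^x$ after $\rho_n$ would force $|\E[M_\upsilon-M_{\rho_n}\mid\Fcal_{\rho_n}]|\ge n^{-1}$, contradicting optional sampling at bounded stopping times; letting $n\to\infty$, so that $\rho_n\downarrow\rho$ with $L^x_\rho=L^x_\tau$ on the interval where $M$ sits at $x$, finishes the proof with no continuity assumption and with all integrability concentrated in the single initial localization. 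The cleanest repair of your write-up is to perform that same initial reduction to a true martingale and only then run the Tanaka argument (for continuous $M$), or to abandon Tanaka and argue via the return times as the paper does.
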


\begin{proof}
By localization we may assume that $M$ is a martingale. Pick any $T<\infty$, any stopping time $\tau\le T$, and any strictly positive $\Fcal_\tau$-measurable random variable $\varepsilon$. To verify the stickiness property~\eqref{eq:U sticky}, we must show that $\P(L^x_T-L^x_\tau \le 2\varepsilon\mid\Fcal_\tau)>0$. To this end, define stopping times
\begin{align*}
\rho_n&=\inf\{t\ge\tau\colon |M_t-x|\ge n^{-1}\}\wedge T, \\
\rho&=\inf\{t\ge\tau\colon M_t\ne x\}\wedge T.
\end{align*}
We first show that
\begin{equation} \label{eq:M LT 1}
\P(L^x_T - L^x_{\rho_n} \le \varepsilon \mid\Fcal_{\rho_n}) >0.
\end{equation}
Let $B=\{\P(L^x_T - L^x_{\rho_n} \le \varepsilon \mid\Fcal_{\rho_n}) = 0\}$ and define the stopping time
\[
\upsilon = \inf\{t\ge\rho_n\colon M_t = x\} \wedge T.
\]
On $B$ we know that the local time process increases over the interval $[\rho_n,T]$ (in fact, it increases by more than $\varepsilon$). By \citet[Theorem~IV.7]{P:05}, the local time measure $dL_t$ is concentrated on those time points $t$ for which $M_{t-}=M_t=x$. Therefore $M_\upsilon=x$ on $B$. Moreover, $\rho_n$ occurs strictly before $T$ on $B$, so that $|M_{\rho_n}-x|\ge n^{-1}$ on $B$. Combining these observations yields
\[
\E[M_\upsilon - M_{\rho_n} \mid \Fcal_{\rho_n}]
= \begin{cases}
\E[x-M_{\rho_n}\mid\Fcal_{\rho_n}] \le -{\displaystyle\frac{1}{n}} & \text{on $\{M_{\rho_n}\ge x+n^{-1}\}\cap B$}, \\[1ex]
\E[x-M_{\rho_n}\mid\Fcal_{\rho_n}] \ge {\displaystyle\frac{1}{n}} & \text{on $\{M_{\rho_n}\le x-n^{-1}\}\cap B$}.
\end{cases}
\]
Thus $\left|\E[M_\upsilon - M_{\rho_n} \mid \Fcal_{\rho_n}] \right| \ge n^{-1}$ on $B$. The martingale property then forces $\P(B)=0$, which proves~\eqref{eq:M LT 1}.

Next, we prove that
\begin{equation} \label{eq:M LT 2}
\P(L^x_T - L^x_\rho \le 2\varepsilon \mid\Fcal_\rho) >0.
\end{equation}
To this end, define the stopping time
\[
\sigma = \inf\{t\ge \rho\colon L^x_t \ge L^x_\rho + \varepsilon\}.
\]
On the event $\{\sigma>T\}$, clearly $L^x_T-L^x_\rho\le\varepsilon$. On the event $\{\rho_n\le\sigma\le T\}$, one has $L^x_\sigma\ge L^x_{\rho_n}$ and $L^x_\sigma-L^x_\rho=\varepsilon$, hence $L^x_T-L^x_\rho\ge L^x_T-L^x_{\rho_n} + \varepsilon$. Consequently, for each $n$,
\begin{align*}
\P(L^x_T - L^x_\rho \le 2\varepsilon \mid\Fcal_\rho)
&\ge \P(L^x_T - L^x_{\rho_n} \le \varepsilon, \ \rho_n\le \sigma \mid\Fcal_\rho) \\
&= \E[\bm 1_{\{\rho_n\le \sigma\}} \P(L^x_T - L^x_{\rho_n} \le \varepsilon \mid\Fcal_{\rho_n}) \mid \Fcal_\rho].
\end{align*}
Let $A=\{\P(L^x_T - L^x_\rho \le 2\varepsilon \mid\Fcal_\rho)=0\}$. The above inequality along with~\eqref{eq:M LT 1} yields
\[
\bm 1_{\{\rho_n\le \sigma\}\cap A} = 0
\]
for all $n$. Since $\rho_n\downarrow \rho$, and since $\sigma>\rho$, it follows that $\P(A)=0$. This proves~\eqref{eq:M LT 2}.

Finally, just observe that $M$ is constant and equal to $x$ on $[\tau,\rho)$, so that $L^x_\tau=L^x_\rho$. Therefore
\[
\P(L^x_T-L^x_\tau \le 2\varepsilon\mid\Fcal_\tau) = \E[ \P(L^x_T-L^x_\rho \le 2\varepsilon\mid\Fcal_\rho)\mid\Fcal_\tau] > 0
\]
due to~\eqref{eq:M LT 2}. This completes the proof.
\end{proof}

\section{Further examples of recovery of monotone processes} \label{S:further}

We now consider two examples of set-valued nondecreasing processes that can be recovered from their final values. The first example deals with convex hulls, and we apply the theory of Section~\ref{S:CI} with the complete lattice $S={\rm CO}(\R^d)$ in Example~\ref{E:CI 2}. The second example deals with the collection of sites visited by a random walk on a countable set $\Xcal$, and uses the complete lattice $S=2^\Xcal$ in Example~\ref{E:CI 2_new}.

\subsection{Convex hulls}

Let $X=(X_t)_{t\ge0}$ be a c\`adl\`ag adapted process with values in $\R^d$. By Lemma~\ref{L:conv is adapted}, the ${\rm CO}(\R^d)$-valued process $U=(U_t)_{t\ge0}$ given by
\[
U_t = \overline\conv(X_s\colon s\le t)
\]
is adapted. We have the following result.

\begin{proposition} \label{P:convex hull sticky}
If $X$ is sticky, then
\[
U_t = \bigwedge [U_T \mid \Fcal_t], \qquad t\le T<\infty.
\]
\end{proposition}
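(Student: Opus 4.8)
The plan is to deduce Proposition~\ref{P:convex hull sticky} from Theorem~\ref{T:NCR} applied with the complete lattice $S={\rm CO}(\R^d)$ of Example~\ref{E:CI 2}, exactly as Corollary~\ref{C:sticky} was deduced in the scalar case. Since the statement concerns a bounded horizon, I would first replace $X$ by the stopped process $X^T$ and work on $[0,\infty)$; stickiness of $X$ gives global stickiness of $X^T$, and $U^T_\infty=\overline\conv(X_s\colon s\le T)=U_T$, so it suffices to verify condition~\ref{T:NCR:2} of Theorem~\ref{T:NCR} for $U$ (now meaning $U^T$). Thus let $\tau$ be a stopping time and $Y$ an $\Fcal_\tau$-measurable ${\rm CO}(\R^d)$-valued random variable with $Y\le U_\infty$ (i.e. $Y\subseteq U_\infty$) and $U_\tau\subsetneq Y$ on $\{\tau<\infty\}$; the goal is $\P(\tau<\infty)=0$.

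The key idea is that a strict inclusion $U_\tau\subsetneq Y$ forces $Y$ to contain a point $y$ at strictly positive distance from the closed convex set $U_\tau$, witnessed by a separating hyperplane: on $\{\tau<\infty\}$ one can choose (measurably, using a measurable selection — this is where Section~\ref{S:closed subsets}-type results enter) a unit vector $u$ and scalars such that $\sup_{x\in U_\tau}\langle u,x\rangle < \langle u,y\rangle$ with $y\in Y$. Here is the crucial point: since $X_\tau\in U_\tau$, all of $X$ up to time $\tau$ lies on one side of this hyperplane, at distance at least some $\Fcal_\tau$-measurable $\delta>0$ from $y$; whereas $y\in Y\subseteq U_\infty=\overline\conv(X_s\colon s\le T)$, so $y$ is a limit of convex combinations of the $X_s$, $s\le T$. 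Because $y$ lies strictly beyond the hyperplane $\{\langle u,\cdot\rangle = \sup_{x\in U_\tau}\langle u,x\rangle\}$ that separates it from everything up to time $\tau$, the only way for convex combinations of path values to approach $y$ is for $X$ to actually move to the far side of that hyperplane after time $\tau$ — i.e. there must exist $s\in(\tau,T]$ with $\langle u,X_s\rangle$ exceeding $\sup_{x\in U_\tau}\langle u,x\rangle$ by a positive $\Fcal_\tau$-measurable amount. This produces a genuine excursion of $X$ away from $X_\tau$: on $\{\tau<\infty\}$ we have $\sup_{s\in[\tau,\infty)}d(X_s,X_\tau)\ge \delta'$ for some $\Fcal_\tau$-measurable $\delta'>0$, with probability one. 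But global stickiness of $X^T$ says $\P(\sup_{s\in[\tau,\infty)}d(X_s,X_\tau)\le\delta'/2\mid\Fcal_\tau)>0$ on $\{\tau<\infty\}$ (taking the $\Fcal_\tau$-measurable bound $\delta'/2$), a contradiction unless $\P(\tau<\infty)=0$.

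I expect the main obstacle to be making the separation argument measurable and quantitative: extracting, in an $\Fcal_\tau$-measurable way, the hyperplane direction $u$ and a positive $\Fcal_\tau$-measurable gap $\delta'$ from the abstract strict inclusion $U_\tau\subsetneq Y$, and then rigorously arguing that $y\in\overline\conv(X_s\colon s\le T)$ with $y$ beyond the separating hyperplane forces $X_s$ beyond it for some $s>\tau$ by a controlled margin. The first part should follow from the measurable selection and measurability machinery developed in Section~\ref{S:closed subsets} (the same tools that underlie Theorem~\ref{T:convex sets}); for the second part one notes that if $\langle u,X_s\rangle\le \sup_{x\in U_\tau}\langle u,x\rangle=:c$ for all $s\le T$ then $\langle u,z\rangle\le c$ for every $z\in\overline\conv(X_s\colon s\le T)$, contradicting $\langle u,y\rangle>c$; refining this with the strict gap $\langle u,y\rangle-c>0$ and the fact that $\langle u,X_s\rangle\le c-\delta$ for $s\le\tau$ yields the required uniform margin $\delta'$ for the post-$\tau$ excursion. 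Once these two points are in hand, the contradiction with global stickiness is immediate, and the conclusion $U_t=\bigwedge[U_T\mid\Fcal_t]$ for $t\le T$ follows from Theorem~\ref{T:NCR}.
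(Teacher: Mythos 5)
Your argument is correct in outline and rests on the same two pillars as the paper --- Theorem~\ref{T:NCR} for the lattice ${\rm CO}(\R^d)$ of Example~\ref{E:CI 2}, plus stickiness of $X$ --- but you verify a different condition of that theorem and by a heavier route. The paper checks condition~\ref{T:NCR:3} directly: given $U_\tau\subsetneq Y$ it sets $\varepsilon=1\wedge\frac12\sup_{y\in Y}\inf_{x\in U_\tau}|x-y|$, notes that $Y\not\subseteq \cconv\bigl(U_\tau\cup B(X_\tau,\varepsilon)\bigr)$, and observes that on the positive-conditional-probability event where $X$ stays within $\varepsilon$ of $X_\tau$ on $[\tau,T]$ one has $U_T\subseteq\cconv\bigl(U_\tau\cup B(X_\tau,\varepsilon)\bigr)$, hence $Y\not\subseteq U_T$; no separating hyperplane and no measurable selection of a point $y\in Y\setminus U_\tau$ or of a direction $u$ is ever needed. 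Your route through condition~\ref{T:NCR:2} works, and your quantitative separation step is sound (if $\langle u,X_s\rangle\le c+g/2$ for all $s\le T$ then the closed convex hull misses $y$, so some $s\in(\tau,T]$ has $d(X_s,X_\tau)\ge g/2$), but it makes the measurable-selection issue you flag into a real obligation that the paper never incurs: Section~\ref{S:closed subsets} provides measurability of the lattice and vector-space operations, not a selection theorem, so you would have to import something like Kuratowski--Ryll-Nardzewski/Castaing representations to choose $y$, $u$ and the gap $g$ in an $\Fcal_\tau$-measurable way. Note also that your own argument can be streamlined to avoid this entirely: with $\delta'=1\wedge\frac12\sup_{y\in Y}\inf_{x\in U_\tau}|x-y|$ (measurable via a Castaing representation of $Y$, and implicitly the only measurability the paper's proof uses either), if the path stayed within $\delta'$ of $X_\tau$ on $[\tau,T]$ then $U_T$ would lie in the $\delta'$-neighbourhood of $U_\tau$, which misses some point of $Y\subseteq U_T$ --- so the excursion of size $\delta'$ is forced without any hyperplane, and your contradiction with global stickiness of $X^T$ then goes through exactly as you describe. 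In short: same skeleton, genuinely different verification, with the paper's choice of $\varepsilon$ buying a much lighter measurability burden.
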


\begin{proof}
Relying on the implication $\ref{T:NCR:3}\Rightarrow\ref{T:NCR:1}$ of Theorem~\ref{T:NCR}, it suffices to consider any stopping time $\tau\le T$ and $\Fcal_\tau$-measurable ${\rm CO}(\R^d)$-valued random variable $Y$ such that $U_\tau\subsetneq Y$, and prove that $\P(Y\subseteq U_T\mid\Fcal_\tau)<1$. Define the $\Fcal_\tau$-measurable random variable
\[
\varepsilon = 1\wedge \frac12 \sup_{y\in Y} \inf_{x\in U_\tau} |x-y|
\]
which is strictly positive since $U_\tau\subsetneq Y$. Furthermore, one has
\[
Y \not\subseteq \overline\conv( U_\tau \cup B(X_\tau,\varepsilon)),
\]
where $B(X_\tau,\varepsilon)$ is the ball of radius $\varepsilon$ centered at $X_\tau$. Since $X$ is sticky, one therefore gets
\begin{align*}
0 &< \P(\sup_{t\in[\tau,T]} |X_t-X_\tau| \le \varepsilon \mid\Fcal_\tau) \\
&\le \P( U_T \subseteq \overline\conv(U_\tau \cup B(X_\tau,\varepsilon) \mid\Fcal_\tau) \\
&\le \P( Y \not\subseteq U_T \mid\Fcal_\tau).
\end{align*}
This yields $\P(Y\subseteq U_T\mid\Fcal_\tau)<1$ as required.
\end{proof}

\subsection{Sites visited by a random walk}

Let $X=(X_t)_{t\ge0}$ be a c\`adl\`ag process with values in a countable set $\Xcal$. Define the $2^\Xcal$-valued process $U=(U_t)_{t\ge0}$ by
\[
U_t = \bigcup_{s\le t}\{X_s\}.
\]
This is the process whose value at time $t$ is the set of all sites $X$ has visited up to and including time $t$, and is adapted by Lemma~\ref{L:range of SP}. In this context, if we equip $\Xcal$ with the discrete metric $d(x,y)=\bm1_{\{y\}}(x)$, stickiness of $X$ simply means that
\[
\P(X_t = X_\tau \text{ for all $t\in[\tau,T]$} \mid\Fcal_\tau) > 0
\]
for every $T\ge0$ and every stopping time $\tau\le T$. That is, $X$ has conditionally unbounded holding times.

\begin{proposition} \label{P:RV sticky}
Assume $X$ has conditionally unbounded holding times in the above sense. Then
\[
U_t = \bigwedge [U_T \mid \Fcal_t], \qquad t\le T<\infty.
\]
\end{proposition}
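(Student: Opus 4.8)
The plan is to apply the implication $\ref{T:NCR:3}\Rightarrow\ref{T:NCR:1}$ of Theorem~\ref{T:NCR}, exactly as in the proof of Proposition~\ref{P:convex hull sticky}, working in the complete lattice $S=2^\Xcal$ from Example~\ref{E:CI 2_new}. So I fix a stopping time $\tau\le T$ and an $\Fcal_\tau$-measurable $2^\Xcal$-valued random variable $Y$ with $U_\tau\subsetneq Y$ on the relevant event, and I must show $\P(Y\subseteq U_T\mid\Fcal_\tau)<1$ there. Note that $U_\tau$ ranges over $2^\Xcal$ and $+\infty=\Xcal$, so ``$U_\tau<+\infty$'' just means $U_\tau\neq\Xcal$; on that event the strict inclusion $U_\tau\subsetneq Y$ is meaningful.

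Since $U_\tau\subsetneq Y$, there is a site in $Y\setminus U_\tau$. The key point is that one can pick such a site in an $\Fcal_\tau$-measurable way: because $\Xcal$ is countable, enumerate it as $\Xcal=\{y_1,y_2,\ldots\}$ and let $\xi$ be the first $y_k$ lying in $Y\setminus U_\tau$ (on the event where this set is nonempty). Measurability of $\xi$ follows from the fact that $\{y_k\in Y\}$ and $\{y_k\in U_\tau\}$ are $\Fcal_\tau$-measurable — the membership maps $A\mapsto \bm1\{y_k\in A\}$ on $2^\Xcal$ are $\Scal$-measurable by the construction of the $\sigma$-algebra in Theorem~\ref{T:countable set}. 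The crucial observation is then that $\xi\in Y\setminus U_\tau$, and $\xi\in U_T$ forces $X$ to visit the site $\xi$ at some time in $(\tau,T]$; in particular $X$ cannot stay put at $X_\tau$ throughout $[\tau,T]$ (since $\xi\notin U_\tau$ means $\xi\neq X_s$ for all $s\le\tau$, in particular $\xi\neq X_\tau$). Hence
\[
\{X_t=X_\tau \text{ for all } t\in[\tau,T]\} \subseteq \{\xi\notin U_T\} \subseteq \{Y\not\subseteq U_T\}
\]
on the event $\{U_\tau\subsetneq Y\}$.

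Combining this inclusion with the conditionally-unbounded-holding-times hypothesis gives, on $\{U_\tau\neq\Xcal\}\cap\{U_\tau\subsetneq Y\}$,
\[
0 < \P(X_t=X_\tau \text{ for all } t\in[\tau,T]\mid\Fcal_\tau) \le \P(Y\not\subseteq U_T\mid\Fcal_\tau),
\]
which is exactly $\P(Y\subseteq U_T\mid\Fcal_\tau)<1$, so condition~\ref{T:NCR:3} of Theorem~\ref{T:NCR} is verified and~\ref{T:NCR:1} follows. One should also note that $U$ is nondecreasing, right-continuous (as a nondecreasing $2^\Xcal$-valued process, since $U_t=\bigcup_{s>t}$ trivially fails but actually $U_t=\bigcap_{s>t}U_s$ holds by right-continuity of $X$'s paths — a càdlàg path has $X_t=\lim_{s\downarrow t}X_s$, but in the discrete metric this means $X_s=X_t$ for $s$ close to $t$, so no new site appears immediately after $t$), and adapted by Lemma~\ref{L:range of SP}, so the hypotheses of Theorem~\ref{T:NCR} are met.

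The only genuinely delicate point is the measurable selection of $\xi\in Y\setminus U_\tau$; everything else is a direct transcription of the convex-hull argument with ``ball around $X_\tau$'' replaced by ``singleton $\{X_\tau\}$''. That selection issue is mild here precisely because $\Xcal$ is countable, so a first-index argument works and no general measurable-selection theorem is needed — but one does need to invoke that singleton-membership events are $\Fcal_\tau$-measurable, which is where Theorem~\ref{T:countable set} (the structure of $\Scal$ on $2^\Xcal$) enters. I would also remark that, as in the previous proposition, one cannot in general replace $T$ by $\infty$: if $X$ visits infinitely many sites, $U_\infty$ may fail to determine $U_t$, consistent with the hypothesis only asserting stickiness (not global stickiness).
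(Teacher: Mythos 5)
Your proof is correct and takes essentially the approach the paper intends: the paper's own proof is just the remark that it is ``similar to that of Proposition~\ref{P:convex hull sticky}, but simpler,'' and your verification of condition~\ref{T:NCR:3} of Theorem~\ref{T:NCR} via the conditionally-unbounded-holding-times hypothesis (with the ball around $X_\tau$ replaced by the singleton $\{X_\tau\}$) is precisely that argument. One small simplification: the measurable selection of $\xi$ is superfluous, since on $\{X_t=X_\tau \text{ for all } t\in[\tau,T]\}$ one has $U_T=U_\tau$, so $Y\not\subseteq U_T$ follows directly from $U_\tau\subsetneq Y$ without choosing a point of $Y\setminus U_\tau$.
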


\begin{proof}
The proof is similar to that of Proposition~\ref{P:convex hull sticky}, but simpler.
\end{proof}

\section{Spaces of closed sets} \label{S:closed subsets}

Let $(\Xcal,d)$ be a complete separable metric space, and let ${\rm CL}(\Xcal)$ denote the collection of all nonempty closed subsets of $\Xcal$. In our applications, $\Xcal$ is either a countable set or $\R^d$, but we do not impose this yet. The distance between a point $x\in\Xcal$ and a subset $A\subseteq\Xcal$ is denoted by
\[
d(x,A)=\inf\{d(x,y)\colon y\in A\}.
\]
The {\em Wijsman topology} on ${\rm CL}(\Xcal)$ is the smallest topology for which the maps $A\mapsto d(x,A)$, $x\in\Xcal$, are all continuous; see~\cite{W:66}. It was proved by \citet[Theorem~4.3]{Beer:91} that with the Wijsman topology, ${\rm CL}(\Xcal)$ becomes a Polish space.

The space ${\rm CL}(\Xcal)$ is partially ordered by set inclusion. It is however not a lattice under union and intersection since it does not include the empty set. The space
\[
{\rm CL}_0(\Xcal) = {\rm CL}(\Xcal)\cup\{\emptyset\}
\]
on the other hand is a complete lattice with $\inf_\alpha A_\alpha=\bigcap_\alpha A_\alpha$ and $\sup_\alpha A_\alpha=\overline{\bigcup_\alpha A_\alpha}$ for arbitrary collections $\{A_\alpha\}\subseteq{\rm CL}_0(\Xcal)$. The Wijsman topology is extended to ${\rm CL}_0(\Xcal)$ by declaring a sequence of closed sets $A_n$ convergent to $\emptyset$ if $d(x,A_n)\to\infty$ for all $x\in\Xcal$. Equipped with the extended Wijsman topology, ${\rm CL}_0(\Xcal)$ is again a Polish space; see \citet[Theorem~4.4]{Beer:91}.

The spaces ${\rm CL}(\Xcal)$ and ${\rm CL}_0(\Xcal)$ are convenient from the point of view of stochastic analysis. The reason is a characterization due to~\citet{Hess:83,Hess:86} of the Borel $\sigma$-algebra on ${\rm CL}(\Xcal)$. Namely, the Borel $\sigma$-algebra coincides with the {\em Effros $\sigma$-algebra}, which is generated by the sets $\{A\in{\rm CL}(\Xcal)\colon A\cap V\ne\emptyset\}$, where $V$ ranges over the open subsets of $\Xcal$. This identification leads to the following lemma.

\begin{lemma} \label{L:range of SP}
Let $X=(X_t)_{t\ge0}$ be an $\Xcal$-valued c\`adl\`ag adapted process on a filtered measurable space $(\Omega,\Fcal,\F)$, whose filtration $\F$ is not necessarily right-continuous. Then the ${\rm CL}(\Xcal)$-valued process $Y=(Y_t)_{t\ge0}$ given by
\[
Y_t = \overline{\{X_s\colon s\le t\}}
\]
is adapted. The process is then also adapted when viewed as taking values in ${\rm CL}_0(\Xcal)$.
\end{lemma}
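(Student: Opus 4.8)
The plan is to exploit the identification recalled just above, whereby the Borel $\sigma$-algebra on ${\rm CL}(\Xcal)$ coincides with the Effros $\sigma$-algebra, generated by the sets $\{A\in{\rm CL}(\Xcal)\colon A\cap V\ne\emptyset\}$ with $V$ open. Since a map is measurable as soon as preimages of a generating family lie in the relevant $\sigma$-algebra, proving that $Y$ is adapted amounts to showing that $\{\omega\colon Y_t(\omega)\cap V\ne\emptyset\}\in\Fcal_t$ for every $t\ge0$ and every open $V\subseteq\Xcal$; note that $Y_t(\omega)$ is indeed a nonempty closed set, since it contains $X_0(\omega)$ and is by definition a closure.

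First I would record the elementary topological fact that for any subset $B\subseteq\Xcal$ and any open $V\subseteq\Xcal$ one has $\overline B\cap V\ne\emptyset$ if and only if $B\cap V\ne\emptyset$: a point of $V$ lying in $\overline B$ has $V$ as an open neighbourhood, which must therefore already meet $B$. Applying this with $B=\{X_s(\omega)\colon s\le t\}$ gives
\[
\{Y_t\cap V\ne\emptyset\}=\bigl\{\exists\,s\le t\colon X_s\in V\bigr\}=\bigcup_{s\in[0,t]}\{X_s\in V\}.
\]

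The main obstacle is that this is an \emph{uncountable} union, hence not obviously measurable, and overcoming it is precisely where the c\`adl\`ag hypothesis enters. I claim
\[
\bigcup_{s\in[0,t]}\{X_s\in V\}=\{X_t\in V\}\cup\bigcup_{q\in\Q\cap[0,t]}\{X_q\in V\}.
\]
The inclusion $\supseteq$ is trivial. For $\subseteq$, fix $\omega$ and $s_0\le t$ with $X_{s_0}(\omega)\in V$. If $s_0=t$ we are done; otherwise $s_0<t$, and right-continuity of $s\mapsto X_s(\omega)$ at $s_0$ together with openness of $V$ yields some $\delta>0$, which we may take small enough that $s_0+\delta\le t$, such that $X_s(\omega)\in V$ for all $s\in[s_0,s_0+\delta)$. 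This nondegenerate interval contains a rational $q$; then $q<s_0+\delta\le t$ and $X_q(\omega)\in V$, proving the claim. Since $X$ is adapted, $\{X_q\in V\}\in\Fcal_q\subseteq\Fcal_t$ for every $q\le t$ and $\{X_t\in V\}\in\Fcal_t$, so the right-hand side, being a countable union of $\Fcal_t$-sets, lies in $\Fcal_t$. Only the path regularity of $X$ is used here, not right-continuity of $\F$, consistent with the hypotheses of the lemma.

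Finally, the assertion for ${\rm CL}_0(\Xcal)$ is a routine measure-theoretic addendum: because ${\rm CL}(\Xcal)$ carries the subspace topology inherited from ${\rm CL}_0(\Xcal)$, the trace on ${\rm CL}(\Xcal)$ of the Borel $\sigma$-algebra of ${\rm CL}_0(\Xcal)$ coincides with the Borel $\sigma$-algebra of ${\rm CL}(\Xcal)$. Hence any map that is measurable into ${\rm CL}(\Xcal)$ is a fortiori measurable into ${\rm CL}_0(\Xcal)$, so $Y$ remains adapted when viewed as ${\rm CL}_0(\Xcal)$-valued.
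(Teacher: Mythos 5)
Your proof is correct, and it shares the paper's skeleton — Hess's identification of the Borel with the Effros $\sigma$-algebra, the reduction to showing $\{\omega\colon Y_t(\omega)\cap V\ne\emptyset\}\in\Fcal_t$ for $V$ open, and the observation that this event is exactly $\{X_s\in V \text{ for some } s\le t\}$ — but you resolve the one nontrivial step differently. The paper handles the uncountable union by introducing the debut $\tau=\inf\{s\ge0\colon X_{s-}\in V\}$, noting that $X_-$ is left-continuous so that $\tau$ is a predictable time, and then invoking Dellacherie--Meyer (Theorem IV.73(b)) to conclude $\{\tau<t \text{ or } X_t\in V\}\in\Fcal_t$. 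You instead prove directly that
\[
\bigcup_{s\in[0,t]}\{X_s\in V\}=\{X_t\in V\}\cup\bigcup_{q\in\Q\cap[0,t]}\{X_q\in V\},
\]
using only right-continuity of the paths and openness of $V$; this is more elementary and self-contained (no appeal to the general theory of predictable times, and in fact the left limits are never used), and it makes transparent why right-continuity of $\F$ is not needed. The paper's route is shorter on the page but imports heavier machinery. Your treatment of the ${\rm CL}_0(\Xcal)$ assertion — trace of the Borel $\sigma$-algebra on the subspace ${\rm CL}(\Xcal)$ equals the Borel $\sigma$-algebra of ${\rm CL}(\Xcal)$, combined with the fact that $Y$ never takes the value $\emptyset$ — is a slightly more explicit version of the paper's one-line remark and is fine.
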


\begin{proof}
We need to argue that $\omega\mapsto Y_t(\omega)$ is $\Fcal_t$-measurable for each $t$. Using Hess's characterization, it suffices to inspect inverse images of sets $\{A\in{\rm CL}(\Xcal)\colon A\cap V\ne\emptyset\}$ with $V$ open. That is, we must check that the event
\[
F = \left\{\omega\in\Omega\colon \overline{\{X_s(\omega)\colon s\le t\}}\cap V \ne \emptyset\right\}
\]
lies in $\Fcal_t$. For a c\`adl\`ag process $X$, the set $\overline{\{X_s(\omega)\colon s\le t\}}\cap V$ is nonempty if and only if $X_s(\omega)\in V$ for some $s\le t$. Consequently,
\[
F = \{\omega\in\Omega\colon \tau(\omega) < t \text{ or } X_t(\omega)\in V\}, \qquad \text{where }\tau=\inf\{s\ge0\colon X_{s-} \in V\}.
\]
Since $X_-$ is left-continuous, $\tau$ is predictable, and hence $F\in\Fcal_t$; see \citet[Theorem~IV.73(b)]{DM:78}. The final assertion follows from the fact that $Y$ can never take the value $\emptyset$.
\end{proof}

The following result will be used later. Its proof illustrates the use of the two alternative descriptions of the Borel $\sigma$-algebra on ${\rm CL}(\Xcal)$. We use the notation
\begin{equation} \label{eq:A_epsilon}
A_\varepsilon=\{x\in \Xcal\colon d(x,A)\le\varepsilon\}
\end{equation}
for any $A\in{\rm CL}(\Xcal)$ and any $\varepsilon\ge0$. If $A=\emptyset$ then $A_\varepsilon=\emptyset$ by convention.

\begin{lemma} \label{L:gen prop}
\begin{enumerate}
\item\label{L:gen prop:mu} The map $A\mapsto\mu(A)$ from ${\rm CL}_0(\Xcal)$ to $\R$ is measurable, where $\mu$ is any finite measure on~$\Xcal$.
\item\label{L:gen prop:eps} The map $A\mapsto A_\varepsilon$ from ${\rm CL}_0(\Xcal)$ to itself is measurable for any $\varepsilon>0$.
\end{enumerate}
\end{lemma}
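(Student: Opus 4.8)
The plan is to exploit the two descriptions of the Borel $\sigma$-algebra on ${\rm CL}(\Xcal)$ — as the Wijsman Borel $\sigma$-algebra and as the Effros $\sigma$-algebra generated by $\{A\colon A\cap V\neq\emptyset\}$ for open $V$ — together with the separability of $\Xcal$, which lets us reduce everything to a countable family of test points and radii. Throughout I keep track of the value $\emptyset$ separately: since $\{\emptyset\}$ is measurable in ${\rm CL}_0(\Xcal)$ (it is the preimage of $+\infty$ under $A\mapsto d(x_0,A)$ for any fixed $x_0$, or an easy consequence of the extended Wijsman topology), it suffices in each part to prove measurability of the relevant map on ${\rm CL}(\Xcal)$ and treat $\emptyset$ by hand.

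For part~\ref{L:gen prop:mu}, fix a countable dense set $\{x_k\}\subseteq\Xcal$. For each $k$ and each rational $r>0$ the map $A\mapsto d(x_k,A)$ is Wijsman-continuous, hence the set $\{A\colon d(x_k,A)<r\}=\{A\colon A\cap B(x_k,r)\neq\emptyset\}$ is measurable. For an arbitrary open set $V\subseteq\Xcal$, write $V$ as a countable union of balls $B(x_k,r)$ with $x_k\in V\cap\{x_j\}$ and $r$ rational; then $\{A\colon A\cap V\neq\emptyset\}=\bigcup\{A\colon A\cap B(x_k,r)\neq\emptyset\}$ is measurable, recovering the Effros generators. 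Now the map $A\mapsto\mu(A)$ for a finite Borel measure $\mu$: since $A$ is closed, $\mu(A)=\mu(\Xcal)-\mu(A^c)$, and $A^c$ is open. I would show $A\mapsto\mu(A^c)$ is measurable by approximating $A^c$ from inside: $\mu(A^c)=\sup_n\mu(W_n)$ where $W_n=\{x\colon d(x,A)>1/n\}$; but $\{x\colon d(x,A)>1/n\}$ is itself determined measurably in $A$. More directly, using inner regularity of $\mu$ one can write $\mu(A^c)=\sup\{\mu(F)\colon F\subseteq A^c \text{ compact}\}$, and the condition $F\subseteq A^c$, i.e.\ $A\cap F=\emptyset$, is co-measurable (complement of $\{A\colon A\cap F\neq\emptyset\}$, which for compact $F$ is a countable intersection over $\varepsilon$-neighborhoods of open Effros sets, hence measurable). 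Taking the supremum over a countable family of compact sets exhausting $A^c$ from inside then exhibits $A\mapsto\mu(A^c)$, hence $A\mapsto\mu(A)$, as a countable supremum of measurable functions. Finally $\mu(\emptyset)=0$ handles the remaining point.

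For part~\ref{L:gen prop:eps}, I must show $A\mapsto A_\varepsilon$ is measurable as a map ${\rm CL}_0(\Xcal)\to{\rm CL}_0(\Xcal)$ for fixed $\varepsilon>0$. Since the target carries the Effros $\sigma$-algebra, it suffices to show that for every open $V\subseteq\Xcal$ the set $\{A\colon A_\varepsilon\cap V\neq\emptyset\}$ is measurable in ${\rm CL}_0(\Xcal)$. The key geometric observation is that $A_\varepsilon\cap V\neq\emptyset$ iff there exists $x\in A$ with $d(x,V)<\varepsilon$, i.e.\ iff $A\cap V^{(\varepsilon)}\neq\emptyset$ where $V^{(\varepsilon)}=\{y\colon d(y,V)<\varepsilon\}$ is open; more carefully, $A_\varepsilon$ is the closed $\varepsilon$-neighborhood, so $A_\varepsilon\cap V\neq\emptyset$ iff $\inf_{x\in A}d(x,V)\le\varepsilon$, and this is equivalent (for open $V$) to $A\cap\{y\colon d(y,V)<\varepsilon'\}\neq\emptyset$ for all $\varepsilon'>\varepsilon$, i.e.\ to a countable intersection (over rationals $\varepsilon'\downarrow\varepsilon$) of Effros generators $\{A\colon A\cap V^{(\varepsilon')}\neq\emptyset\}$, each of which is measurable by part~\ref{L:gen prop:mu}'s reduction. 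Thus $\{A\colon A_\varepsilon\cap V\neq\emptyset\}$ is measurable, and the value $A=\emptyset$ maps to $A_\varepsilon=\emptyset$ by convention, consistent with measurability.

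The main obstacle I anticipate is getting the boundary behavior of the $\varepsilon$-neighborhood map exactly right: whether $A_\varepsilon\cap V\neq\emptyset$ corresponds to a strict or non-strict inequality on distances, and hence whether one needs a countable intersection over $\varepsilon'>\varepsilon$ or can work at $\varepsilon$ directly. Getting this wrong would produce a set that is a priori only analytic rather than Borel. The safe route is to always express the event as a countable intersection or union of genuine Effros generators $\{A\colon A\cap W\neq\emptyset\}$ with $W$ open, invoking separability of $\Xcal$ to make the relevant $W$ come from a countable family; the distance-function continuity from the Wijsman topology then does the rest. For part~\ref{L:gen prop:mu}, the analogous subtlety is interchanging the supremum over compact exhausting sets with measurability, which is handled cleanly because a countable exhausting family suffices by inner regularity on the Polish space $\Xcal$.
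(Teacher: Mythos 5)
Your argument for part~(ii) has a genuine error at exactly the boundary issue you flagged, and you resolved it in the wrong direction. You reduce $\{A\colon A_\varepsilon\cap V\ne\emptyset\}$ to the countable intersection over rational $\varepsilon'>\varepsilon$ of the Effros generators $\{A\colon A\cap V^{(\varepsilon')}\ne\emptyset\}$, which is based on the claimed equivalence $A_\varepsilon\cap V\ne\emptyset \iff \inf_{x\in A}d(x,V)\le\varepsilon$. But $A_\varepsilon\cap V\ne\emptyset$ means some $y\in V$ satisfies $d(y,A)\le\varepsilon$, whereas $\inf_{x\in A}d(x,V)=\inf_{y\in V}d(y,A)\le\varepsilon$ only says the infimum over the \emph{open} set $V$ is at most $\varepsilon$; these differ when that infimum equals $\varepsilon$ and is not attained on $V$. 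Concretely, in $\Xcal=\R$ take $A=\{0\}$, $V=(1,2)$, $\varepsilon=1$: then $A_\varepsilon\cap V=[-1,1]\cap(1,2)=\emptyset$, yet $0\in A\cap V^{(\varepsilon')}$ for every $\varepsilon'>1$, so $A$ lies in the set you construct but not in the set you need. Thus your proof establishes measurability of a strictly larger set and does not prove part~(ii). The correct route is the opposite of your ``safe'' one: work at level $\varepsilon$ with a strict inequality. The paper verifies the single identity $\{A\colon A_\varepsilon\cap V\ne\emptyset\}=\{A\colon A\cap V_\varepsilon\ne\emptyset\}$, where $V_\varepsilon=\{x\colon d(x,V)<\varepsilon\}$ is open: the inclusion ``$\supseteq$'' is immediate, and for ``$\subseteq$'' one uses openness of $V$ to trade the non-strict bound $d(y,A)\le\varepsilon$ for a point of $A$ at distance strictly less than $\varepsilon$ from $V$ (this step is clear in $\R^d$, which is the case needed for Theorem~\ref{T:convex sets}). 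The event is then a single Effros generator and no limit over $\varepsilon'$ is required.

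For part~(i) your idea works but has a quantifier gap you should not gloss over: writing $\mu(A^c)=\sup\{\mu(F)\colon F \text{ compact},\ F\cap A=\emptyset\}$ is a supremum over an uncountable family, and a ``countable exhausting family'' for $A^c$ depends on $A$; measurability requires one fixed countable family of compact sets that realizes the supremum for \emph{every} $A$ simultaneously. This can be arranged (by tightness choose compacts $K_n$ with $\mu(\Xcal\setminus K_n)<1/n$ and take sets of the form $K_n$ minus finite unions of rational balls covering $A\cap K_n$), but that construction is the actual content of your reduction and is missing. The paper's proof is much shorter and avoids Effros generators and regularity of $\mu$ altogether: since $A$ is closed, $\mu(A)=\lim_n\int(1-n\,d(x,A))^+\mu(dx)$, and each integral is Wijsman-continuous in $A$ by dominated convergence, so $A\mapsto\mu(A)$ is a pointwise limit of continuous maps. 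Your separate treatment of $\emptyset$ agrees with the paper's.
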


\begin{proof}
In both cases it suffices to show that the respective maps restricted to ${\rm CL}(\Xcal)$ are measurable.

\ref{L:gen prop:mu}: Using closedness of $A$ and the dominated convergence theorem, one obtains the equalities $\mu(A)=\int\bm 1_A(x)\mu(dx)=\int\bm 1_{\{0\}}(d(x,A))\mu(dx)=\lim_n \int(1-nd(x,A))^+\mu(dx)$, where $y^+$ denotes the positive part of~$y\in\R$. Each map $A\mapsto \int(1-nd(x,A))^+\mu(dx)$ is continuous, hence measurable, by definition of the Wijsman topology and the fact that $\delta\mapsto\int(1-n\delta)^+\mu(dx)$ is continuous due to the dominated convergence theorem. Thus the map $A\mapsto\mu(A)$ is the pointwise limit of real-valued measurable maps, and therefore itself measurable.

\ref{L:gen prop:eps}: One readily verifies $A_\varepsilon\cap V = A\cap V_\varepsilon$ for any open set $V$, where we define the open set $V_\varepsilon=\{x\in\Xcal: d(x,V)<\varepsilon\}$. Therefore $\{A\in{\rm CL}(\Xcal)\colon A_\varepsilon \cap V\} = \{A\in{\rm CL}(\Xcal)\colon A \cap V_\varepsilon\}$. The left-hand side is the inverse image of $\{A\colon A\cap V\ne\emptyset\}$ under the map $A\mapsto A_\varepsilon$, and the right-hand side lies in the Effros $\sigma$-algebra on ${\rm CL}(\Xcal)$. Measurability now follows from Hess's characterization.
\end{proof}

\subsection{Lattice operations}

In the following lemma, measurability is always understood with respect to the Borel $\sigma$-algebra. Since ${\rm CL}_0(\Xcal)$ is Polish, the Borel $\sigma$-algebra on ${\rm CL}_0(\Xcal)^k$ for $k\in\{2,3,\ldots,\infty\}$ coincides with the corresponding product $\sigma$-algebra.

\begin{lemma} \label{L:latop}
\begin{enumerate}
\item\label{L:latop:cup} The map $(A,B)\mapsto A\cup B$ from ${\rm CL}_0(\Xcal)^2$ to ${\rm CL}_0(\Xcal)$ is continuous.
\item\label{L:latop:closed} The set $\{(A,B)\colon A\subseteq B\}$ is closed in ${\rm CL}_0(\Xcal)^2$.
\item\label{L:latop:incr} If $A_n$ is a nondecreasing sequence in ${\rm CL}_0(\Xcal)$, meaning that $A_n\subseteq A_{n+1}$ for all~$n$, then $A_n$ converges to $\overline{\bigcup_n A_n}$ in ${\rm CL}_0(\Xcal)$.
\item\label{L:latop:cup2} The map $(A_n)\mapsto \overline{\bigcup_n A_n}$ from ${\rm CL}_0(\Xcal)^\infty$ to ${\rm CL}_0(\Xcal)$ is measurable.
\item\label{L:latop:cap} If $\Xcal$ is $\sigma$-compact, the map $(A_n)\mapsto \bigcap_n A_n$ from ${\rm CL}_0(\Xcal)^\infty$ to ${\rm CL}_0(\Xcal)$ is measurable.
\end{enumerate}
\end{lemma}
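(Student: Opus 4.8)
\textbf{Proof plan for Lemma~\ref{L:latop}.}

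The plan is to handle the five assertions in the order they are stated, since the later ones build on the earlier ones. For \ref{L:latop:cup}, I would argue directly from the definition of the Wijsman topology: it suffices to check that if $A_n\to A$ and $B_n\to B$, then $d(x,A_n\cup B_n)\to d(x,A\cup B)$ for every $x\in\Xcal$, together with the $\emptyset$-convergence convention. But $d(x,A_n\cup B_n)=\min\{d(x,A_n),d(x,B_n)\}$, and $\min$ is continuous on $[0,\infty]$, so this is immediate; the case where $A$ or $B$ is $\emptyset$ is covered by the convention that $d(x,\emptyset)=\infty$ and the fact that $\min\{d(x,A_n),+\infty\}=d(x,A_n)$. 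For \ref{L:latop:closed}, I would use that $A\subseteq B$ iff $d(x,A)\ge d(x,B)$ for all $x$ (equivalently, for all $x$ in a countable dense subset, by continuity of $y\mapsto d(y,A)$), which exhibits $\{(A,B)\colon A\subseteq B\}$ as a countable intersection of closed sets, each being the preimage of a closed half-line under the continuous map $(A,B)\mapsto d(x,A)-d(x,B)$ on ${\rm CL}_0(\Xcal)^2$ (with the usual care at $\infty$).

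For \ref{L:latop:incr}, with $A_n$ nondecreasing and $A=\overline{\bigcup_n A_n}$, I would show $d(x,A_n)\downarrow d(x,A)$ for each $x$: monotonicity of $d(x,\cdot)$ under $\supseteq$ gives that $d(x,A_n)$ is nonincreasing and bounded below by $d(x,A)$; conversely, given any $y\in\bigcup_n A_n$, eventually $d(x,A_n)\le d(x,y)$, so $\lim_n d(x,A_n)\le d(x,y)$, and taking the infimum over $y\in\bigcup_n A_n$ (whose closure is $A$) gives $\lim_n d(x,A_n)\le d(x,A)$. If instead $A=\emptyset$ (only possible if every $A_n=\emptyset$) the convergence is trivial. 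Then \ref{L:latop:cup2} follows by writing $\overline{\bigcup_n A_n}=\lim_{N\to\infty}\bigcup_{n\le N}A_n$ as a pointwise limit: the finite-union maps $(A_n)\mapsto\bigcup_{n\le N}A_n$ are measurable by iterating \ref{L:latop:cup}, the sequence $\bigcup_{n\le N}A_n$ is nondecreasing in $N$, so by \ref{L:latop:incr} it converges to $\overline{\bigcup_n A_n}$, and a pointwise limit of measurable maps into a metrizable space is measurable.

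The main obstacle is \ref{L:latop:cap}, where the $\sigma$-compactness hypothesis is essential. Intersection is not continuous and the naive ``pointwise limit'' approach fails because $\bigcap_{n\le N}A_n$ need not converge to $\bigcap_n A_n$ in the Wijsman topology without compactness (the sets can ``escape to infinity''). My plan is to reduce to checking, via Hess's characterization, that for each open $V\subseteq\Xcal$ the set $\{(A_n)\colon (\bigcap_n A_n)\cap V\ne\emptyset\}$ is measurable, equivalently that $(A_n)\mapsto d(x,\bigcap_n A_n)$ is measurable for $x$ in a countable dense set. Writing $\Xcal=\bigcup_k K_k$ with $K_k$ compact and increasing, I would express $\bigcap_n A_n\cap K_k$ in terms of the $A_n$: since each $A_n\cap K_k$ is compact, $\bigcap_n(A_n\cap K_k)\ne\emptyset$ iff all finite sub-intersections are nonempty, and more quantitatively $d(x,\bigcap_n A_n)=\lim_{k}\,( \text{something involving } d(x, A_n\cap K_k)\text{'s})$, using a finite-intersection/compactness argument to pass from finite to countable intersections on each $K_k$ and then letting $k\to\infty$. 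Getting the measurability of the auxiliary maps $(A_n)\mapsto \bigcap_{n\le N} A_n\cap K_k$ and assembling these limits correctly — in particular justifying the interchange of the two limits using compactness of $K_k$ — is where the real work lies; everything else is a routine consequence of the Wijsman-topology definition and Hess's theorem.
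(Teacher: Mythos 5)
Parts \ref{L:latop:cup}--\ref{L:latop:cup2} of your plan are correct and essentially the paper's own argument: the identity $d(x,A\cup B)=d(x,A)\wedge d(x,B)$ for \ref{L:latop:cup}, the monotone convergence of distances for \ref{L:latop:incr}, and the pointwise-limit-of-measurable-maps argument for \ref{L:latop:cup2}. Your route to \ref{L:latop:closed} (writing $\{A\subseteq B\}$ as the countable intersection over a dense set of the closed sets $\{(A,B)\colon d(x,A)\ge d(x,B)\}$) differs from the paper, which instead deduces closedness directly from \ref{L:latop:cup} via $B=\lim(A_n\cup B_n)=A\cup B$; both work, yours just needs the small extra check that testing on a countable dense set suffices and the usual care when $A$ or $B$ is $\emptyset$.

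For \ref{L:latop:cap}, however, your proposal is a plan rather than a proof, and the step you defer (``where the real work lies'') is precisely the entire content of the paper's argument. Two concrete issues. First, your scheme needs measurability of the auxiliary maps $(A_1,\ldots,A_N)\mapsto d\bigl(x,\,A_1\cap\cdots\cap A_N\cap K_k\bigr)$ (or of $(A,B)\mapsto A\cap B$ restricted to a compact window), and this is not obtainable by any continuity or limit argument: as the paper's remark after the lemma shows, finite intersection is not continuous even on compact convex subsets of $\R^2$, so the finite-intersection case is essentially as hard as the countable one. What makes it work --- and what the paper actually does --- is a purely combinatorial characterization over a countable dense set $D$: the event $\{K\cap V\cap\bigcap_n A_n\ne\emptyset\}$ is shown to be equivalent to a countable quantifier formula ``$\exists$ rational $\varepsilon>0$, $\forall k$, $\exists x_k\in D$ with $d(x_k,K)\le k^{-1}$, $d(x_k,V^c)\ge\varepsilon$, $d(x_k,A_n)\le k^{-1}$ for all $n$,'' whose nontrivial direction uses compactness of $K$ to extract a convergent subsequence of near-witnesses; this exhibits the event as countable unions and intersections of measurable cylinder sets $\{(A_n)\colon d(x_k,A_n)\le k^{-1}\ \forall n\}$. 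Your sketch never supplies this (or an equivalent) device, so as it stands \ref{L:latop:cap} is not proved. Second, a smaller point: the ``interchange of limits'' you worry about is a red herring --- if you intersect with $K_k$ first, let the finite-intersection index tend to infinity using compactness (distances to a decreasing sequence of compacta converge to the distance to the intersection), and then let $k\to\infty$ using \ref{L:latop:incr}, the iterated limit in that order is all you need; the genuine difficulty is the measurability of the inner quantities, which again requires the dense-set-plus-compactness trick above.
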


\begin{proof}
\ref{L:latop:cup}: Observe that $d(x,A\cup B) \le d(x,A)\wedge d(x,B)$ for all $x\in\Xcal$, where we use the convention $d(x,\emptyset)=\infty$. We claim that strict inequality is impossible. Indeed suppose $A\cup B\ne\emptyset$ and let $x_n\in A\cup B$ achieve $d(x,x_n)\to d(x,A\cup B)$. Suppose $A$ contains infinitely many of the $x_n$ (otherwise $B$ does, and we work with $B$ instead). Then $x_n\in A$ along a subsequence, so that $d(x,A)\le d(x,x_n)\to d(x,A\cup B)$. Therefore strict inequality is impossible, and we have $d(\fdot,A\cup B) = d(\fdot,A)\wedge d(\fdot,B)$. The stated continuity property now follows from the definition of the extended Wijsman topology.

\ref{L:latop:closed}: If $A_n\subseteq B_n$ and $(A_n,B_n)\to(A,B)$, then $B=\lim_n B_n=\lim_n A_n\cup B_n = A\cup B$ in view of~\ref{L:latop:cup}. Thus $A\subseteq B$, as required.

\ref{L:latop:incr}: The statement is obvious if $A_n=\emptyset$ for all $n$, so we suppose $A_n\ne\emptyset$ for some $n$, and then without loss of generality for all $n$. Define $A=\overline{\bigcup_n A_n}$ for ease of notation. Fix any $x\in\Xcal$. Since $A_n\subseteq A$, we have $d(x,A_n)\ge d(x,A)$ and hence $\lim_n d(x,A_n)\ge d(x,A)$. For the reverse inequality, pick any $\varepsilon>0$ and $y\in A$ such that $d(x,y)\le d(x,A)+\varepsilon$. Since $A$ is the closure of $\bigcup_nA_n$, there exists some $m$ and some $z\in A_m$ with $d(y,z)\le\varepsilon$. Consequently,
\[
d(x,A_m) \le d(x,z) \le d(x,y) + d(y,z) \le d(x,A) + 2\varepsilon.
\]
Since $d(x,A_n)$ is non-increasing, and since $\varepsilon$ was arbitrary, it follows that $\lim_n d(x,A_n)\le d(x,A)$. We deduce that $d(x,A_n)\to d(x,A)$ for all $x\in\Xcal$, which means that $A_n\to A$.

\ref{L:latop:cup2}: First note that the map $\varphi_k\colon{\rm CL}_0(\Xcal)^\infty\to{\rm CL}_0(\Xcal)$, $(A_n)\mapsto\bigcup_{n\le k}A_n$ is continuous, being a composition ${\rm CL}_0(\Xcal)^\infty \to {\rm CL}_0(\Xcal)^k \to {\rm CL}_0(\Xcal)$, $(A_n)\mapsto(A_1,\ldots,A_k)\mapsto \bigcup_{n\le k}A_n$ of two maps that are continuous by definition of the product topology and due to repeated use of~\ref{L:latop:cup}. By~\ref{L:latop:incr}, the map $(A_n)\mapsto \overline{\bigcup_n A_n}$ is the pointwise limit of the maps $\varphi_k$, and therefore measurable by \citet[Lemma~4.29]{AB:06}.

\ref{L:latop:cap}: Let $\varphi\colon(A_n)\mapsto \bigcap_n A_n$ denote the intersection map. We will prove that $\varphi^{-1}({\bf F})$ is a measurable subset of ${\rm CL}(\Xcal)^\infty$, hence of ${\rm CL}_0(\Xcal)^\infty$, for any measurable ${\bf F}\subseteq{\rm CL}(\Xcal)$. The same then holds for any measurable ${\bf F}\subseteq{\rm CL}_0(\Xcal)$, since $\varphi^{-1}(\{\emptyset\})=(\varphi^{-1}({\rm CL}(\Xcal)))^c$ is measurable. This readily implies the assertion.

We must thus argue that $\varphi^{-1}({\bf F})$ is measurable for any measurable ${\bf F}\subseteq{\rm CL}(\Xcal)$. In view of Hess's characterization of the Borel $\sigma$-algebra on ${\rm CL}(\Xcal)$ it suffices to consider sets of the form ${\bf F}=\{A\in{\rm CL}(\Xcal)\colon A\cap V\ne\emptyset\}$ with $V$ open. For such sets we have
\begin{equation} \label{eq:L:latop:phi-1}
\varphi^{-1}({\bf F}) = \big\{(A_n)\colon V\cap\bigcap_n A_n \ne\emptyset\big\} = \bigcup_m \big\{(A_n)\colon K_m\cap V\cap\bigcap_n A_n \ne\emptyset\big\},
\end{equation}
where $\{K_m\}_{m\in\N}$ is a compact cover of $\Xcal$, which exists by $\sigma$-compactness. Thus it suffices to prove measurability of any set of the form $\{(A_n)\colon K\cap V\cap\bigcap_n A_n \ne\emptyset\}$ with $V$ open and $K$ compact. Fix a countable dense subset $D\subseteq\Xcal$. We claim that for any $(A_n)\in{\rm CL}_0(\Xcal)^\infty$ we have
\begin{equation} \label{KcapV}
K\cap V\cap\bigcap_n A_n \ne\emptyset \qquad\Longleftrightarrow \qquad
\begin{minipage}[c][4.5em][c]{.40\textwidth}
\begin{center}
$\exists\varepsilon>0$ rational, $\forall k\in\N$, $\exists x_k\in D$, $d(x_k,K)\le k^{-1}$, $d(x_k,V^c)\ge\varepsilon$, and $\forall n\in\N$, $d(x_k,A_n)\le k^{-1}$.
\end{center}
\end{minipage}
\end{equation}
To prove ``$\Rightarrow$'', let $x\in K\cap V\cap\bigcap_n A_n$. Since $V$ is open, there exists some rational $\varepsilon>0$ such that $d(x,V^c)\ge 2\varepsilon$. Since $D$ is dense, there exist points $x_k\in D$ such that $d(x_k,x)\le k^{-1}\wedge\varepsilon$. The triangle inequality then yields $d(x_k,V^c)\ge d(x,V^c)-d(x_k,x)\ge\varepsilon$, and we have $d(x_k,K)\le d(x_k,x)\le k^{-1}$ as well as $d(x_k,A_n)\le d(x_k,x)\le k^{-1}$ for all~$n$. This proves the forward implication. To prove ``$\Leftarrow$'', let $\varepsilon>0$ and $x_k$, $k\in\N$, have the stated properties. Since $d(x_k,K)\le k^{-1}$, there exist $y_k\in K$ with $d(x_k,y_k)\le 2k^{-1}$. By compactness of $K$, we may pass to a subsequence and assume that $y_k\to x$ for some $x\in K$. Then also $x_k\to x$, and continuity of the distance function implies $d(x,V^c)\ge\varepsilon$ and $d(x,A_n)=0$ for all $n$. We conclude that $x\in K\cap V\cap\bigcap_n A_n$, which is therefore nonempty. This completes the proof of \eqref{KcapV}.

Now, observe that \eqref{KcapV} can be expressed as
\begin{equation} \label{eq:L:latop:cube}
\big\{(A_n)\colon K\cap V\cap\bigcap_n A_n \ne\emptyset\big\}
= \bigcup_{\substack{\varepsilon>0\\[.5ex]\varepsilon\in\Q}}  \bigcap_{k\in\N} \bigcup_{\substack{x_k\in D \text{ with}\\[1ex]d(x_k,K)\le k^{-1}\\[1ex]d(x_k,V^c)\ge\varepsilon}} \big\{(A_n)\colon d(x_k,A_n)\le k^{-1} \ \forall n\big\}.
\end{equation}
The right-hand side is formed through countable unions and intersections of sets of the form $\{(A_n)\colon d(x_k,A_n)\le k^{-1} \ \forall n\}$. Such a set is actually a cube ${\bf G}_k\times {\bf G}_k\times\cdots\subseteq{\rm CL}(\Xcal)^\infty$, where ${\bf G}_k=\{A\colon d(x_k,A)\le k^{-1}\}$ is the inverse image of $[0,k^{-1}]$ under the continuous map $A\mapsto d(x_k,A)$. We deduce that the right-hand side of \eqref{eq:L:latop:cube}, and hence the left-hand side, is measurable. Thus $\varphi^{-1}({\bf F})$ in \eqref{eq:L:latop:phi-1} is also measurable, as required.
\end{proof}

\begin{remark}
It appears unlikely to the author that $\sigma$-compactness is really be needed for measurability of the intersection map; dropping this assumption would be desirable and natural. However, it is interesting to note that there are some striking differences between unions and intersections. For instance, $A\cap B$ may be empty even if $A$ and $B$ are not. Also, the map $(A,B)\mapsto A\cap B$ is not continuous, even if one restrict to compact convex sets. Indeed, let $\Xcal=\R^2$, and let $A_n=\{(x_1,x_2):0\le x_1\le 1/n,\, x_2=nx_1\}$ be the straight line from the origin to the point $(1/n,1)$. Then $A_n\to B$, where $B=\{0\}\times[0,1]$ is the line from the origin to $(0,1)$. Thus $A_n\cap B=\{(0,0)\}$ does not converge to $(\lim_n A_n)\cap B=B$.
\end{remark}

\subsection{Vector space operations}

If $A$ and $B$ are subsets of a vector space, their sum is defined by $A+B=\{x+y\colon x\in A,\,y\in B\}$. This operation is associative and commutative, so the expression $A+B+C$ is unambiguous and equal to $A+C+B$, etc. Similarly, we define $\lambda A=\{\lambda x\colon x\in A\}$ for any scalar~$\lambda$. The dimension of an affine subspace $V$ is denoted $\dim(V)$, with the convention $\dim(\emptyset)=-1$.

\begin{lemma} \label{L:vec op}
Assume $\Xcal$ is a locally convex topological vector space.\footnote{Of course, the topology is assumed to coincide with the one generated by the given metric $d$.}
\begin{enumerate}
\item\label{L:vec op:+} The map $(A_1,\ldots,A_n)\mapsto \overline{A_1+\cdots+A_n}$ from ${\rm CL}_0(\Xcal)^n$ to ${\rm CL}_0(\Xcal)$ is measurable for any $n\in\N$.
\item\label{L:vec op:scal} The map $A\mapsto\lambda A$ from ${\rm CL}_0(\Xcal)$ to itself is measurable, where $\lambda$ is any scalar.
\item\label{L:vec op:conv} The map $A\mapsto\cconv(A)$ from ${\rm CL}_0(\Xcal)$ to itself is measurable.
\item\label{L:vec op:aff} The map $A\mapsto\caff(A)$ from ${\rm CL}_0(\Xcal)$ to itself is measurable.
\item\label{L:vec op:dim aff} The map $A\mapsto\dim(\aff(A))$ from ${\rm CL}_0(\Xcal)$ to $\R\cup\{\infty\}$ is lower semicontinuous.
\end{enumerate}
\end{lemma}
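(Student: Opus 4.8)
The plan is built on two facts. Since $\Xcal$ is separable and metrizable it is second countable, so fix once and for all a countable base $\Bcal$ for its topology. And recall that the Borel $\sigma$-algebra on ${\rm CL}_0(\Xcal)$ is generated by the sets $\{A\colon A\cap V\neq\emptyset\}$ with $V$ open; moreover each such set is already open in the (extended) Wijsman topology, since writing $V=\bigcup_n B(x_n,r_n)$ gives $\{A\colon A\cap V\neq\emptyset\}=\bigcup_n\{A\colon d(x_n,A)<r_n\}$, a union of subbasic Wijsman-open sets. Thus, to prove a map $\Phi\colon{\rm CL}_0(\Xcal)\to{\rm CL}_0(\Xcal)$ measurable it suffices to check that $\{A\colon\Phi(A)\cap V\neq\emptyset\}$ is measurable for every open $V$, and to get continuity it suffices to check it is open.

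Assertions~(i) and~(ii) follow directly. For~(i), fix an open $V$; since $V$ is open, $\overline{A_1+\cdots+A_n}$ meets $V$ iff $A_1+\cdots+A_n$ meets $V$, and by joint continuity of addition this holds iff there are $U_1,\dots,U_n\in\Bcal$ with $U_1+\cdots+U_n\subseteq V$ and $A_i\cap U_i\neq\emptyset$ for each $i$. Hence $\{(A_1,\dots,A_n)\colon\overline{A_1+\cdots+A_n}\cap V\neq\emptyset\}$ is a countable union of measurable rectangles $\prod_i\{A_i\colon A_i\cap U_i\neq\emptyset\}$ and is therefore measurable. For~(ii), the case $\lambda=0$ is trivial (the map sends every nonempty set to $\{0\}$ and $\emptyset$ to $\emptyset$); for $\lambda\neq0$, $x\mapsto\lambda x$ is a homeomorphism of $\Xcal$, so $\lambda A$ is closed and $\{A\colon\lambda A\cap V\neq\emptyset\}=\{A\colon A\cap\lambda^{-1}V\neq\emptyset\}$ is in the Effros $\sigma$-algebra.

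For~(iii) and~(iv) the idea is to express the closed convex and affine hulls through countable unions of measurable images of $A$. Writing $A^{(k)}$ for the $k$-fold sum $A+\cdots+A$ (with $A^{(0)}=\{0\}$), one has $\cconv(A)=\overline{\bigcup_{k\ge1}\tfrac1k A^{(k)}}$ and $\caff(A)=\overline{\bigcup_{s>t\ge0}\tfrac1{s-t}(A^{(s)}+(-A)^{(t)})}$: in each case the combinations of points of $A$ with rational coefficients are dense in the convex, respectively affine, hull (joint continuity of scalar multiplication and addition), while each set appearing in the union is contained in the respective closed hull (using $\lambda^{-1}(\lambda C)=C$ for a scalar $\lambda\neq0$) and the union contains all those rational-coefficient combinations. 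By~(i) and~(ii), and because closures may be inserted freely inside a countable union without affecting its closure, each of the maps $A\mapsto\tfrac1k\overline{A^{(k)}}$ and $A\mapsto\tfrac1{s-t}\overline{\big(\overline{A^{(s)}}+\overline{(-A)^{(t)}}\big)}$ is measurable; hence $A$ maps measurably into the associated sequence in ${\rm CL}_0(\Xcal)^\infty$, and Lemma~\ref{L:latop}\ref{L:latop:cup2} yields measurability of $\cconv$ and $\caff$.

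For~(v), note that $\dim(\aff(A))\ge m$ iff $A$ contains $m+1$ affinely independent points. The set of affinely independent $(m+1)$-tuples is open in $\Xcal^{m+1}$: were a sequence of affinely dependent tuples to converge to some tuple, one could extract from the compact set $\{t\in\R^{m+1}\colon\sum_i t_i=0,\ \max_i|t_i|=1\}$ a convergent subsequence of normalized coefficient vectors and pass to the limit, obtaining a nontrivial affine dependence of the limiting tuple --- a contradiction. Hence, given $A$ with affinely independent points $a_0,\dots,a_m\in A$, pick open $U_i\ni a_i$ with the property that every choice of one point from each $U_i$ is affinely independent; then $\bigcap_{i=0}^m\{B\colon B\cap U_i\neq\emptyset\}$ is a Wijsman-open neighbourhood of $A$ contained in $\{B\colon\dim(\aff(B))\ge m\}$. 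So the latter set is open for every integer $m$ (the cases $m\le0$ being trivial, as $\{A\colon A\neq\emptyset\}={\rm CL}(\Xcal)$ is open), which is lower semicontinuity. I expect the main obstacle to be the two identities in~(iii) and~(iv): checking that the countable unions genuinely reproduce the closed convex and affine hulls in an arbitrary locally convex space, and parametrizing the affine combinations so that only sums, the negation map, and positive rational dilations of $A$ occur. The rest is routine given the principle set out in the first paragraph.
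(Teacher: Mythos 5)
Your proof is correct, but it deviates from the paper's argument in the two places where the real work happens. For the sum map \ref{L:vec op:+}, the paper fattens the summands to $(A_i)_\varepsilon$, invokes a translation-invariant metric to get $d(x,A_1+A_2)-4\varepsilon\le d(x,(A_1)_\varepsilon+(A_2)_\varepsilon)\le d(x,A_1+A_2)$, proves measurability of each fattened map via a countable dense set, and passes to the pointwise Wijsman limit as $\varepsilon\downarrow0$ (then handles $n>2$ by induction); you bypass all of this by writing the hitting set $\{(A_1,\dots,A_n)\colon \overline{A_1+\cdots+A_n}\cap V\ne\emptyset\}$ directly as a countable union of rectangles $\prod_i\{A_i\colon A_i\cap U_i\ne\emptyset\}$ over base elements with $U_1+\cdots+U_n\subseteq V$, using only continuity of addition — simpler, and it treats all $n$ at once. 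For \ref{L:vec op:conv} and \ref{L:vec op:aff}, the paper again works on hitting sets, decomposing $\{A\colon\cconv(A)\cap V\ne\emptyset\}$ over rational convex (resp.\ affine) weights and citing \ref{L:vec op:+}--\ref{L:vec op:scal}; you instead establish the explicit representations $\cconv(A)=\overline{\bigcup_k \tfrac1k A^{(k)}}$ and $\caff(A)=\overline{\bigcup_{s>t\ge0}\tfrac1{s-t}\bigl(A^{(s)}+(-A)^{(t)}\bigr)}$ (common denominators turning rational convex combinations into equal-weight averages, and a positive/negative split for affine ones) and then compose componentwise-measurable maps with the countable-union map of Lemma~\ref{L:latop}\ref{L:latop:cup2}. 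Both routes rest on the same density of rational-coefficient combinations; yours reuses the already-proved union lemma and keeps the index set small, the paper's stays entirely at the level of Effros generators. Parts \ref{L:vec op:scal} and \ref{L:vec op:dim aff} coincide with the paper's, except that you spell out (via normalizing coefficients in the compact set $\{\sum_i t_i=0,\ \max_i|t_i|=1\}$) the openness of the set of affinely independent tuples, which the paper leaves implicit. One step you state without proof — that inserting closures inside the countable unions does not change their closure — needs the inclusion $\overline{A^{(s)}}+\overline{(-A)^{(t)}}\subseteq\overline{A^{(s)}+(-A)^{(t)}}$, which follows in one line from continuity of addition, so there is no gap.
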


\begin{proof}
In each case, we only need to consider inverse images of measurable subsets of ${\rm CL}(\Xcal)$, since the inverse image of $\{\emptyset\}$ is obviously measurable for each of the given maps. The proofs all use Hess's characterization in terms of the Effros $\sigma$-algebra. Thus we inspect inverse images of the set $\{A\in{\rm CL}(\Xcal)\colon A\cap V\ne\emptyset\}$, where $V$ is any nonempty open subset of $\Xcal$.

\ref{L:vec op:+}: It suffices to consider the case $n=2$, as the general case follows by induction together with the fact that $\overline{A_1+A_2+A_3}=\overline{\overline{A_1+A_2}+A_3}$. Define the maps
\[
\varphi_\varepsilon\colon (A_1,A_2) \mapsto \overline{(A_1)_\varepsilon + (A_2)_\varepsilon}
\]
for any $\varepsilon\ge0$, where we use the notation~\eqref{eq:A_epsilon}. We may assume without loss of generality that the metric $d$ is translation invariant, see e.g.~\citet[Lemma~5.75]{AB:06}, in which case one readily verifies the inequalities
\[
d(x,A_1+A_2)-4\varepsilon \le d(x,(A_1)_\varepsilon+(A_2)_\varepsilon)\le d(x,A_1+A_2)
\]
for any $x\in\Xcal$ and $A_1,A_2\in {\rm CL}_0(\Xcal)$. It follows that $\lim_{\varepsilon\to0}\varphi_\varepsilon = \varphi_0$ pointwise with respect to the Wijsman topology. Thus it suffices to prove measurability of $\varphi_\varepsilon$ for $\varepsilon>0$. To this end, let $D\subseteq\Xcal$ be a countable dense subset. Observe that $\overline{(A_1)_\varepsilon+(A_2)_\varepsilon}$ intersects the open set $V$ if and only if $(A_1)_\varepsilon+(A_2)_\varepsilon$ does. Since each $(A_i)_\varepsilon$ has nonempty interior, this holds if and only if $x_1+x_2\in V$ for some points $x_i\in D\cap(A_i)_\varepsilon$. This can be expressed as follows:
\[
\{(A_1,A_2)\colon \overline{(A_1)_\varepsilon+(A_2)_\varepsilon}\cap V\ne\emptyset\} = \bigcup_{\substack{x_1,\,x_2\in D \\ x_1+x_2\in V}} \{(A_1,A_2)\colon d(x_i,A_i)\le\varepsilon,\, i=1,2\}.
\]
The right-hand side is a countable union of products of the sets $\{A_i\colon d(x_i,A_i)\le\varepsilon\}$, $i=1,2$, which are measurable since $d(x_i,\fdot)$ is continuous. Hence $\varphi_\varepsilon$ is measurable, as required.

\ref{L:vec op:scal}: If $\lambda=0$, the inverse image is either empty or all of ${\rm CL}(\Xcal)$, so we may suppose that $\lambda$ is nonzero. But then $\{A\colon (\lambda A)\cap V\ne\emptyset\}=\{A\colon A\cap (\lambda^{-1}V)\ne\emptyset\}$ is measurable since $\lambda^{-1}V$ is open whenever $V$ is.

\ref{L:vec op:conv}: Since $V$ is open, we have $V\cap\cconv(A)\ne\emptyset$ if and only if $V\cap\conv(A)\ne\emptyset$. This is equivalent to $\sum_i\lambda_i x_i \in V$ for some (finitely many) convex weights $\lambda_i$ and points $x_i\in A$. Again since $V$ is open, the $\lambda_i$ may be chosen rational. Therefore,
\[
\{A\colon \cconv(A)\cap V\ne\emptyset\} = \bigcup_n \bigcup_{\substack{\lambda_i\in\Q_+\text{ with}\\[0.5ex]\sum_{i=1}^n\lambda_i=1}} \{ A\colon (\lambda_1A + \cdots + \lambda_n A)\cap V\ne\emptyset\}.
\]
The right-hand side is measurable in view of \ref{L:vec op:+} and \ref{L:vec op:scal}, so the left-hand side is measurable as well.

\ref{L:vec op:aff}: The proof is identical to the one for the convex hull, except that the $\lambda_i$ are affine weights rather than convex weights, meaning that they sum to one but are not constrained to be nonnegative.

\ref{L:vec op:dim aff}: Choose any convergent sequence $A_n\to A$ and set $k=\dim(\aff(A))$. We need to show that $\liminf_n \dim(\aff(A_n)) \ge k$. For $k=-1$, i.e.~$A=\emptyset$, the statement is obvious. Suppose instead $0\le k<\infty$. Then there exist $k+1$ affinely independent points $x_0,\ldots,x_k\in A$. By definition of the extended Wijsman topology, $d(x_i,A_n)\to0$ for $i=0,\ldots,k$. Thus for all large $n$, $A_n$ also contains $k+1$ affinely independent points, whence $\dim(\aff(A_n))\ge k$. Finally, if $k=\infty$, the above argument replaced by an arbitrary $k'\in\N$ shows that $\dim(\aff(A_n))\ge k'$ for all large $n$, and thus $\lim_n \dim(\aff(A_n)) =\infty$.
\end{proof}

\subsection{The space of convex subsets of Euclidean space}

In this subsection we assume that $\Xcal=\R^d$ and that the metric comes from the norm, $d(x,y)=\|x-y\|$. We consider the subspace ${\rm CO}(\Xcal) \subset {\rm CL}_0(\Xcal)$ consisting of all closed convex subsets, equipped with the subspace topology and the associated Borel $\sigma$-algebra. The space ${\rm CO}(\Xcal)$ is again partially ordered by set inclusion, and is a complete lattice with $\inf_\alpha A_\alpha = \bigcap_\alpha A_\alpha$ and $\sup_\alpha A_\alpha = \cconv(\bigcup_\alpha A_\alpha)$ for arbitrary collections $\{A_\alpha\}\subseteq{\rm CO}(\Xcal)$. Note that ${\rm CO}(\Xcal)$ is a closed subset of ${\rm CL}_0(\Xcal)$. The following result shows that this complete lattice satisfies the assumptions imposed in Section~\ref{S:CI}.

\begin{theorem} \label{T:convex sets}
The complete lattice ${\rm CO}(\Xcal)$ satisfies assumptions \ref{A1}--\ref{A3}. A strictly increasing measurable map $\phi\colon{\rm CO}(\Xcal)\to\R$ is given by
\[
\phi(A) = \dim(\aff(A)) + \mu(A \mid \aff(A)),
\]
where $\mu(\fdot\mid V)$ is the distribution of an $\R^d$-valued standard Gaussian random variable conditioned to lie in the affine subspace $V$. We set $\mu(\emptyset\mid\emptyset)=0$ by convention.
\end{theorem}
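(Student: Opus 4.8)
The plan is to verify \ref{A1}, \ref{A2}, \ref{A3} in turn, drawing on the measure-theoretic results of Section~\ref{S:closed subsets}; the only substantial work is \ref{A3}. Throughout we use that ${\rm CO}(\R^d)$ is a closed subset of the Polish space ${\rm CL}_0(\R^d)$, so its Borel $\sigma$-algebra is the trace of that of ${\rm CL}_0(\R^d)$ and, on finite or countable powers, agrees with the product $\sigma$-algebra. For \ref{A1}: the set $\{(A,B)\colon A\subseteq B\}$ is closed in ${\rm CL}_0(\R^d)^2$ by Lemma~\ref{L:latop}\ref{L:latop:closed}, hence its intersection with ${\rm CO}(\R^d)^2$ is closed, hence Borel, hence in $\Scal\otimes\Scal$. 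For \ref{A2}: the countable supremum $(A_n)\mapsto\cconv\big(\bigcup_n A_n\big)=\cconv\big(\overline{\bigcup_n A_n}\big)$ is the composition of the measurable map $(A_n)\mapsto\overline{\bigcup_n A_n}$ of Lemma~\ref{L:latop}\ref{L:latop:cup2} with the measurable map $\cconv$ of Lemma~\ref{L:vec op}\ref{L:vec op:conv}, while the countable infimum $(A_n)\mapsto\bigcap_n A_n$ is measurable by Lemma~\ref{L:latop}\ref{L:latop:cap} (here $\R^d$ is $\sigma$-compact). Both maps take values in ${\rm CO}(\R^d)$, so, the latter carrying the trace $\sigma$-algebra, they are measurable as ${\rm CO}(\R^d)$-valued maps.

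Turning to \ref{A3}: first, $\phi$ is well-defined, with $\phi(\emptyset)=-1$, while for nonempty $A$ with $k=\dim(\aff(A))$ the measure $\mu(\fdot\mid\aff(A))$ is a nondegenerate Gaussian supported on $\aff(A)$, hence assigns positive mass to every nonempty relatively open subset of $\aff(A)$; since a nonempty convex set has nonempty relative interior, this gives $\mu(A\mid\aff(A))\in(0,1]$, so $\phi(A)\in(k,k+1]$. Strict monotonicity now follows by cases. If $A\subsetneq B$ with $A=\emptyset$, then $\phi(A)=-1<0<\phi(B)$. If $A\neq\emptyset$ and $\dim(\aff(A))<\dim(\aff(B))$, then $\phi(A)\le\dim(\aff(A))+1\le\dim(\aff(B))<\phi(B)$. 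Finally, if $A\neq\emptyset$ and $\dim(\aff(A))=\dim(\aff(B))$, then since $\aff(A)\subseteq\aff(B)$ we get $\aff(A)=\aff(B)=:V$ and $\phi(B)-\phi(A)=\mu(B\setminus A\mid V)$; as $\mathrm{relint}(B)$ is open in $V$ and its closure equals $B\not\subseteq A$, it cannot be contained in the closed set $A$, so $\mathrm{relint}(B)\setminus A$ is a nonempty relatively open subset of $V$ contained in $B\setminus A$, which $\mu(\fdot\mid V)$ charges positively. Hence $\phi(A)<\phi(B)$.

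The remaining and main obstacle is the measurability of $\phi$, and in particular of $A\mapsto\mu(A\mid\aff(A))$: here the measure against which $A$ is integrated depends on $A$ itself, so Lemma~\ref{L:gen prop}\ref{L:gen prop:mu} does not apply directly. The plan is to express this map as a pointwise limit of ratios of a \emph{fixed} Gaussian measure of enlargements. Let $\gamma$ be the standard Gaussian on $\R^d$ and, for nonempty $A$, write $\aff(A)=v_0+W$ with $W$ a linear subspace and $v_0\in W^\perp$; a routine computation identifies $\mu(\fdot\mid\aff(A))$ with the law of $v_0+P_WZ$ for $Z\sim\gamma$, where $P_W$ (resp.\ $P_{W^\perp}$) is the orthogonal projection onto $W$ (resp.\ $W^\perp$). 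The Pythagorean identity $d(z,A)^2=\|P_{W^\perp}z-v_0\|^2+d(P_Wz,\,A-v_0)^2$ (valid because $A\subseteq v_0+W$), together with the independence of $P_WZ$ and $P_{W^\perp}Z$, yields, with $A_\varepsilon$ as in \eqref{eq:A_epsilon},
\[
\mu(A\mid\aff(A))\ \le\ \frac{\gamma(A_\varepsilon)}{\gamma\big((\aff(A))_\varepsilon\big)}\ \le\ \P\big(d(P_WZ,\,A-v_0)\le\varepsilon\big).
\]
The right-hand side decreases to $\P(P_WZ\in A-v_0)=\mu(A\mid\aff(A))$ as $\varepsilon\downarrow0$ since $A-v_0$ is closed, and the denominator is strictly positive for every $\varepsilon>0$; hence $\mu(A\mid\aff(A))=\lim_{n\to\infty}\gamma(A_{1/n})\big/\gamma\big((\aff(A))_{1/n}\big)$ on $\{A\neq\emptyset\}$. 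Each term here is measurable in $A$ by Lemma~\ref{L:gen prop}\ref{L:gen prop:eps} (the enlargement), Lemma~\ref{L:vec op}\ref{L:vec op:aff} (the affine hull), and Lemma~\ref{L:gen prop}\ref{L:gen prop:mu} (the $\gamma$-mass). Combined with the lower semicontinuity of $A\mapsto\dim(\aff(A))$ from Lemma~\ref{L:vec op}\ref{L:vec op:dim aff} and the measurability of the singleton $\{\emptyset\}$ (on which $\phi\equiv-1$), this shows that $\phi$ is measurable, completing the verification of \ref{A3}.
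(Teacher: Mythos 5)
Your proof is correct and follows essentially the same route as the paper: \ref{A1} and \ref{A2} via Lemmas~\ref{L:latop} and~\ref{L:vec op}, strict monotonicity by the same case analysis on $\dim(\aff(\cdot))$, and measurability of $A\mapsto\mu(A\mid\aff(A))$ through the very same limit formula $\lim_{\varepsilon\downarrow0}\mu(A_\varepsilon)/\mu(\aff(A)_\varepsilon)$ combined with Lemma~\ref{L:gen prop} and Lemma~\ref{L:vec op}\ref{L:vec op:aff}. The only difference is that you supply details the paper leaves implicit (the Pythagorean/independence argument justifying the limit identity, and the positivity $\mu(A\mid\aff(A))>0$), which is welcome but does not change the approach.
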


\begin{proof}
Due to Lemma~\ref{L:latop}\ref{L:latop:closed} the set $\{(A,B)\in{\rm CO}(\Xcal)^2\colon A\subseteq B\}$ is closed in ${\rm CO}(\Xcal)^2$ and hence measurable. Thus Assumption~\ref{A1} holds. Lemma~\ref{L:latop}\ref{L:latop:cap} yields that the countable infimum map is measurable, and Lemma~\ref{L:latop}\ref{L:latop:cup2} together with Lemma~\ref{L:vec op}\ref{L:vec op:conv} yield that the countable supremum map is measurable. Thus Assumption~\ref{A2} holds. Next, we claim that the map $\phi$ is strictly increasing. To see this, first note that $\phi(\emptyset)=0$ and $\phi(A)\ge1$ if $A\ne\emptyset$. Next, let $A\subsetneq B$ be two nonempty convex sets. If $\dim(\aff(A))<\dim(\aff(B))$ then $\phi(A) \le \dim(\aff(B)) - 1 + \mu(A\mid\aff(A)) \le \dim(\aff(B)) < \phi(B)$. On the other hand, if $\dim(\aff(A))=\dim(\aff(B))$, then the two affine hulls coincide and we denote them both by~$V$. Since $A$ is strictly contained in $B$ and both sets are convex and closed, $B\setminus A$ contains a set which is open in $V$. Therefore $\phi(B)-\phi(A)=\mu(B\setminus A\mid V)>0$. Finally, to see that $\phi$ is measurable, first note that $A\mapsto\dim(\aff(A))$ is measurable since it is lower semicontinuous by Lemma~\ref{L:vec op}\ref{L:vec op:dim aff}. Next, observe that
\[
\mu(A\mid \aff(A)) = \lim_{\varepsilon\downarrow0} \frac{\mu(A_\varepsilon)}{\mu(\aff(A)_\varepsilon)}, \qquad A\ne\emptyset,
\]
where $\mu(\fdot)$ is the standard Gaussian distribution on $\R^d$. Therefore, by Lemma~\ref{L:gen prop}\ref{L:gen prop:mu}--\ref{L:gen prop:eps} and Lemma~\ref{L:vec op}\ref{L:vec op:aff}, the map $A\mapsto\mu(A\mid\aff(A))$ is a limit of measurable maps, and hence itself measurable.
\end{proof}

\begin{lemma} \label{L:conv is adapted}
Let $X=(X_t)_{t\ge0}$ be an $\R^d$-valued c\`adl\`ag adapted process on a filtered measurable space $(\Omega,\Fcal,\F)$, whose filtration $\F$ is not necessarily right-continuous. Then the ${\rm CO}(\R^d)$-valued process $Y=(Y_t)_{t\ge0}$ given by
\[
Y_t = \cconv(X_s\colon s\le t)
\]
is adapted.
\end{lemma}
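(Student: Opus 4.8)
The plan is to reduce the statement to the already-established Lemma~\ref{L:range of SP}, which handles the analogous claim for the closure process $Y_t = \overline{\{X_s \colon s\le t\}}$ taking values in ${\rm CL}(\R^d)$, together with the measurability of the closed convex hull map from Lemma~\ref{L:vec op}\ref{L:vec op:conv}. The key observation is the identity
\[
\cconv(X_s\colon s\le t) = \cconv\!\left(\overline{\{X_s\colon s\le t\}}\right),
\]
which holds because the closed convex hull of a set equals the closed convex hull of its closure (the closure of a set is contained in its closed convex hull, and taking closed convex hull is monotone and idempotent). Thus $Y_t$ is the composition of the map $\omega\mapsto \overline{\{X_s(\omega)\colon s\le t\}}$, which is $\Fcal_t$-measurable as a ${\rm CL}(\R^d)$-valued (hence ${\rm CL}_0(\R^d)$-valued) random element by Lemma~\ref{L:range of SP}, with the map $A\mapsto\cconv(A)$ from ${\rm CL}_0(\R^d)$ to ${\rm CL}_0(\R^d)$, which is measurable by Lemma~\ref{L:vec op}\ref{L:vec op:conv}.

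The remaining point is that the composition lands in ${\rm CO}(\R^d)$ and is measurable as a ${\rm CO}(\R^d)$-valued map. Since $X$ never takes the value $\emptyset$-ish issues do not arise: $\overline{\{X_s\colon s\le t\}}$ is always a nonempty closed set, so its closed convex hull is a nonempty closed convex set, i.e.\ an element of ${\rm CO}(\R^d)$. Because ${\rm CO}(\R^d)$ is a closed (hence Borel) subset of ${\rm CL}_0(\R^d)$ carrying the subspace topology and associated Borel $\sigma$-algebra, a map into ${\rm CL}_0(\R^d)$ whose image lies in ${\rm CO}(\R^d)$ is Borel measurable as a ${\rm CO}(\R^d)$-valued map if and only if it is Borel measurable as a ${\rm CL}_0(\R^d)$-valued map. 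Therefore $\omega\mapsto Y_t(\omega)$ is $\Fcal_t$-measurable for each $t$, which is precisely the assertion that $Y$ is adapted.

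I do not anticipate a serious obstacle here; the proof is essentially a two-line composition argument once the set identity above is noted. The one place to be slightly careful is verifying that identity, $\cconv(B) = \cconv(\overline B)$ for any $B\subseteq\R^d$: one inclusion is immediate from monotonicity of $\cconv$ since $B\subseteq\overline B$, and the reverse follows since $\overline B \subseteq \cconv(B)$ (as $\cconv(B)$ is closed and contains $B$), whence $\cconv(\overline B)\subseteq\cconv(\cconv(B)) = \cconv(B)$. Everything else is a direct appeal to Lemma~\ref{L:range of SP} and Lemma~\ref{L:vec op}\ref{L:vec op:conv}.
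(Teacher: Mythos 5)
Your proposal is correct and matches the paper's proof, which also cites exactly Lemma~\ref{L:range of SP} and Lemma~\ref{L:vec op}\ref{L:vec op:conv}; you merely spell out the implicit composition step $\cconv(B)=\cconv(\overline B)$ and the subspace-measurability point, which the paper leaves tacit. No issues.
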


\begin{proof}
This follows from Lemma~\ref{L:range of SP} and Lemma~\ref{L:vec op}\ref{L:vec op:conv}.
\end{proof}

\subsection{The space of subsets of a countable set}

In this subsection we assume that $\Xcal$ is countable set equipped with the discrete metric $d(x,y)=\bm 1_{\{y\}}(x)$. Then every subset of $\Xcal$ is closed, so $2^\Xcal={\rm CL}_0(\Xcal)$. This space is partially ordered by set inclusion, and is a complete lattice under union and intersection. Furthermore, it satisfies the assumptions of Section~\ref{S:CI}.

\begin{theorem} \label{T:countable set}
The complete lattice $2^\Xcal$ satisfies assumptions \ref{A1}--\ref{A3}. A strictly increasing measurable map $\phi:2^\Xcal\to\R$ is given by
\[
\phi(A) = \sum_{x\in A}w(x),
\]
where $\{w(x):x\in\Xcal\}$ is a countable set of strictly positive numbers summing to one.
\end{theorem}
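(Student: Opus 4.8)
The plan is to work with the identification $2^\Xcal\cong\{0,1\}^\Xcal$ given by $A\mapsto(\bm 1_A(x))_{x\in\Xcal}$ and to verify all three assumptions directly in terms of the coordinate events $E_x=\{A\in 2^\Xcal\colon x\in A\}$, $x\in\Xcal$. First I would record that the Borel $\sigma$-algebra $\Scal$ on $2^\Xcal={\rm CL}_0(\Xcal)$, which by Hess's characterization equals the Effros $\sigma$-algebra, coincides with the $\sigma$-algebra generated by $\{E_x\colon x\in\Xcal\}$, i.e., with the product $\sigma$-algebra under the above identification: since $\Xcal$ carries the discrete metric, each singleton $\{x\}$ is open, so $E_x=\{A\colon A\cap\{x\}\ne\emptyset\}$ lies in the Effros $\sigma$-algebra; conversely, every open $V\subseteq\Xcal$ is the countable union of the singletons it contains, whence $\{A\colon A\cap V\ne\emptyset\}=\bigcup_{x\in V}E_x$, and $\{\emptyset\}=\bigcap_{x\in\Xcal}E_x^c$ is measurable as well. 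Once this is in place, each of \ref{A1}--\ref{A3} becomes a routine manipulation.

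For \ref{A1}, I would simply write
\[
\{(A,B)\colon A\subseteq B\}=\bigcap_{x\in\Xcal}\bigl((E_x^c\times 2^\Xcal)\cup(2^\Xcal\times E_x)\bigr),
\]
a countable intersection of members of $\Scal\otimes\Scal$; alternatively one may quote Lemma~\ref{L:latop}\ref{L:latop:closed} together with the fact that the Borel $\sigma$-algebra of ${\rm CL}_0(\Xcal)^2$ equals $\Scal\otimes\Scal$ since ${\rm CL}_0(\Xcal)$ is Polish. For \ref{A2}, note that in $2^\Xcal$ the countable supremum is set union and the countable infimum is set intersection (every subset of $\Xcal$ is closed). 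For the supremum map, $\{(A_n)_n\colon x\in\bigcup_n A_n\}=\bigcup_n\{(A_n)_n\colon x\in A_n\}\in\Scal^\infty$ for every $x$, so $(A_n)_n\mapsto\bigcup_n A_n$ is measurable; the infimum map is handled identically with $\bigcup$ and $\bigcap$ interchanged. (Since a countable discrete space is $\sigma$-compact, one could instead appeal to Lemma~\ref{L:latop}\ref{L:latop:cup2} and \ref{L:latop:cap}, but the direct argument is shorter.)

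For \ref{A3}, I would take the map $\phi$ exhibited in the statement: let $w\colon\Xcal\to(0,\infty)$ satisfy $\sum_{x\in\Xcal}w(x)=1$ (such a $w$ exists because $\Xcal$ is countable) and set $\phi(A)=\sum_{x\in A}w(x)$. Strict monotonicity is immediate: if $A\subsetneq B$ then $\phi(B)-\phi(A)=\sum_{x\in B\setminus A}w(x)>0$ since $B\setminus A\ne\emptyset$ and $w$ is strictly positive everywhere. As for measurability, $\phi(A)=\mu(A)$ where $\mu=\sum_{x\in\Xcal}w(x)\delta_x$ is a finite (indeed probability) measure on $\Xcal$, so $A\mapsto\phi(A)$ is measurable by Lemma~\ref{L:gen prop}\ref{L:gen prop:mu}; equivalently, enumerating $\Xcal=\{x_1,x_2,\ldots\}$, one has $\phi=\lim_{N\to\infty}\sum_{n=1}^N w(x_n)\bm 1_{E_{x_n}}$, a pointwise limit of measurable functions. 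This completes the verification.

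I do not expect any genuine obstacle here; the one point that deserves a sentence of care is the identification of the Borel/Effros $\sigma$-algebra on $2^\Xcal$ with the product $\sigma$-algebra generated by the coordinate events $E_x$, since this is precisely what reduces all three assumptions to elementary countable set operations. Everything else is bookkeeping.
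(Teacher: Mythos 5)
Your proof is correct, and it takes a more elementary route than the paper. The paper disposes of \ref{A1} and \ref{A2} in one line by citing the general machinery for lattices of closed sets, namely Lemma~\ref{L:latop}\ref{L:latop:closed}, \ref{L:latop:cup2} and \ref{L:latop:cap} (the last of which needs $\sigma$-compactness of $\Xcal$ --- trivially satisfied by a countable discrete space), and it checks measurability of $\phi$ by writing $\phi(A)=\sum_{x\in\Xcal}(1-d(x,A))w(x)$ and invoking continuity of $A\mapsto d(x,A)$ in the Wijsman topology. You instead first identify the Borel (= Effros, by Hess) $\sigma$-algebra on $2^\Xcal={\rm CL}_0(\Xcal)$ with the product $\sigma$-algebra generated by the coordinate events $E_x=\{A\colon x\in A\}$ --- which is indeed the one point requiring care, and your argument for it (singletons are open; every open $V$ is a countable union of singletons; $\{\emptyset\}=\bigcap_x E_x^c$) is sound --- and then verify \ref{A1}--\ref{A3} by hand through countable operations on the $E_x$. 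What your approach buys is self-containedness and transparency: it avoids the comparatively heavy proof of Lemma~\ref{L:latop}\ref{L:latop:cap} (whose $\sigma$-compactness argument is overkill here) and makes explicit that $2^\Xcal$ with its Borel structure is just $\{0,1\}^\Xcal$ with the product $\sigma$-algebra, so that the supremum/intersection maps and the order relation are measurable for purely combinatorial reasons. What the paper's route buys is brevity and uniformity with the general ${\rm CL}_0$ framework used for the convex-sets lattice. Your two measurability arguments for $\phi$ (via Lemma~\ref{L:gen prop}\ref{L:gen prop:mu} with $\mu=\sum_x w(x)\delta_x$, or as a pointwise limit of finite sums of $w(x)\bm 1_{E_x}$) are both fine and essentially equivalent to the paper's formula, since $1-d(x,A)=\bm 1_A(x)$ in the discrete metric.
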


\begin{proof}
Assumptions~\ref{A1} and~\ref{A2} follows directly from Lemma~\ref{L:latop}\ref{L:latop:closed}, \ref{L:latop:cup2}, and~\ref{L:latop:cap}. The map $\phi$ is clearly strictly increasing. To see that it is measurable, write $\phi(A)=\sum_{x\in\Xcal}\bm 1_A(x)w(x)=\sum_{x\in\Xcal}\bm (1-d(x,A))w(x)$ and observe that $A\mapsto d(x,A)$ is continuous and hence measurable.
\end{proof}

\appendix

\section{Extension of Dedekind complete lattices}

\begin{proposition} \label{P:ext Dedekind}
Let $(S_0,\le)$ be a Dedekind complete lattice equipped with a $\sigma$-algebra~$\Scal_0$, and assume that the following conditions hold:
\begin{enumerate}[label={\rm(A\arabic*$_0$)}]
\item\label{A1_0} The set $\{(x,y)\in S_0^2\colon x\le y\}$ lies in the product $\sigma$-algebra $\Scal_0^2$.
\item\label{A2_0} For every measurable subset $A\in\Scal_0$, the sets
\[
\{(x_1,x_2,\ldots)\colon \sup\{x_1,x_2,\ldots\} \in A\} \quad\text{and}\quad \{(x_1,x_2,\ldots)\colon \inf\{x_1,x_2,\ldots\} \in A\}
\]
both lie in $\Scal_0^\infty$.\footnote{Here $\sup$ and $\inf$ refer to the operations on $S_0$. If $(x_1,x_2,\ldots)$ is a sequence in $S_0$ for which $\sup_n x_n$ does not exist, then the condition $\sup_n x_n \in A$ is by convention not satisfied. Similarly for $\inf_n x_n$. In particular, by considering $A=S_0$, condition \ref{A2_0} implies that the set of sequences which admit a supremum and/or infimum is measurable, i.e.~lies in $\Scal_0^\infty$.}
\item\label{A3_0} There exists a strictly increasing measurable map $\phi_0:S_0\to\R$.
\end{enumerate}
Define $S=S_0\cup\{-\infty,+\infty\}$, where $-\infty$ and $+\infty$ are not elements of $S_0$, and define $\Scal=\Scal_0\vee\sigma(\{-\infty\},\{+\infty\})$. Extend the order $\le$ to $S$ by declaring $-\infty$ ($+\infty$) a lower (upper) bound on $S_0$. Then $(S,\le)$ with the $\sigma$-algebra $\Scal$ satisfies \ref{A1}--\ref{A3}.
\end{proposition}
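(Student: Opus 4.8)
The plan is to verify the three assumptions \ref{A1}--\ref{A3} for the extended lattice $(S,\Scal)$ one at a time, reducing each to the corresponding hypothesis \ref{A1_0}--\ref{A3_0} on $(S_0,\Scal_0)$ by carefully partitioning according to which of the new points $\pm\infty$ occur. The key observation throughout is that $\Scal$ restricted to $S_0$ is exactly $\Scal_0$, that $\{-\infty\}$ and $\{+\infty\}$ are atoms of $\Scal$, and that products $S^k$ decompose measurably into finitely many ``cylinders'' in which each coordinate is either in $S_0$ or is one of the two new points. First I would note that completeness of $(S,\le)$ is essentially automatic: any $A\subseteq S$ either contains $+\infty$ (then $\sup A=+\infty$) or $A\setminus\{-\infty\}$ is a subset of $S_0$ which, if nonempty and bounded above in $S_0$, has a supremum there by Dedekind completeness, and if unbounded above in $S_0$ has supremum $+\infty$ in $S$; dually for infima. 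So $S$ is a complete lattice with top $+\infty$ and bottom $-\infty$.

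For \ref{A1}: the set $\{(x,y)\in S^2\colon x\le y\}$ is the disjoint union of $\{(x,y)\in S_0^2\colon x\le y\}$ (measurable by \ref{A1_0}, and a measurable subset of $S^2$ since $S_0\in\Scal$), together with the ``edge'' pieces $\{-\infty\}\times S$, $S\times\{+\infty\}$, and $\{(-\infty,-\infty)\}$, $\{(+\infty,+\infty)\}$ — all of which are measurable rectangles or points in $\Scal^2$. Taking the union gives a set in $\Scal^2$. For \ref{A2}: given a sequence $(x_n)\in S^\infty$, split $S^\infty$ measurably into the event $E_+=\{x_n=+\infty \text{ for some }n\}$, where $\sup_n x_n=+\infty$; its complement, on which every $x_n\in S_0\cup\{-\infty\}$; within that, throw away the indices with $x_n=-\infty$ (each such event is a measurable cylinder) and apply \ref{A2_0} to the remaining subsequence, with the fallback $\sup=-\infty$ when all coordinates are $-\infty$ or when the $S_0$-supremum fails to exist. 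The countably many cases are indexed by subsets of $\N$ describing which coordinates are $-\infty$, which is still a countable (or at worst measurable) decomposition; one has to be a little careful that ``the $S_0$-supremum of the remaining coordinates exists'' is itself a measurable event, but this is precisely what the footnote to \ref{A2_0} guarantees (taking $A=S_0$). The infimum map is handled symmetrically.

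For \ref{A3}: define $\phi\colon S\to\R$ by $\phi(-\infty)=-1+\inf(\arctan\circ\,\phi_0)$... more simply, replace $\phi_0$ by the bounded strictly increasing map $\widetilde\phi_0=\arctan\circ\,\phi_0$ taking values in $(-\pi/2,\pi/2)$, and set $\phi(x)=\widetilde\phi_0(x)$ for $x\in S_0$, $\phi(-\infty)=-2$, $\phi(+\infty)=2$. This is measurable since $S_0$, $\{-\infty\}$, $\{+\infty\}$ partition $S$ into measurable pieces and $\phi$ is measurable on each; and it is strictly increasing because $-\infty$ lies strictly below every element of $S_0\cup\{+\infty\}$, $+\infty$ lies strictly above every element of $S_0\cup\{-\infty\}$, and on $S_0$ strict monotonicity is inherited from $\phi_0$. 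The main obstacle — and it is a minor, bookkeeping-type one rather than a conceptual one — is the measurability argument for the sup/inf maps in \ref{A2}: one must present the decomposition of $S^\infty$ according to the (random) set of indices at which the sequence equals $-\infty$ (resp. $+\infty$) in a way that is visibly a countable combination of measurable cylinders, and invoke \ref{A2_0} on the ``interior'' coordinates while handling the degenerate cases where no $S_0$-valued coordinates remain or where the $S_0$-sup/inf does not exist. Once that is laid out cleanly the rest is routine.
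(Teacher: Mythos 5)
Your verification of \ref{A1}, \ref{A3}, and of completeness of $S$ is correct and matches the paper's proof essentially verbatim (same union decomposition of the order relation into the $S_0$-triangle plus the pieces involving $\pm\infty$, same $\arctan$-truncation with $\phi(\pm\infty)=\pm2$). The problems are in \ref{A2}. First, a concrete error: your ``fallback $\sup=-\infty$ \ldots when the $S_0$-supremum fails to exist'' assigns the wrong value. By Dedekind completeness, the $S_0$-supremum of the retained coordinates fails to exist exactly when that set is unbounded above in $S_0$, and then its supremum in $S$ is $+\infty$, not $-\infty$; the value $-\infty$ is correct only for the all-$-\infty$ sequence. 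Second, the proposed decomposition ``indexed by subsets of $\N$ describing which coordinates are $-\infty$'' is indexed by an \emph{uncountable} family, so it is not a countable combination of measurable cylinders, and the hedge ``or at worst measurable'' begs the question, since measurability of the pieces is precisely what must be proved. The missing device is a measurable compression of the sequence: on the event that no coordinate is $+\infty$ and some coordinate lies in $S_0$, let $T_k$ be the index of the $k$-th coordinate lying in $S_0$ (each $\{T_k=m\}$ is a cylinder event, and $m$ ranges over a countable set), put $Y_k=x_{T_k}$ when $T_k<\infty$ and repeat the last available $S_0$-coordinate otherwise; then $(Y_k)$ is a measurable map into $S_0^\infty$, the $S$-supremum of $(x_n)$ equals the $S_0$-supremum of $(Y_k)$ when the latter exists and $+\infty$ otherwise, and \ref{A2_0} can be invoked. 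That construction (or an equivalent one) is what turns your case analysis into a proof, and it is exactly what your sketch defers.

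For comparison, the paper's own treatment of \ref{A2} is terser than yours: it sets $A_{\rm sup}\subseteq S_0^\infty$ equal to the set of $S_0$-valued sequences admitting an $S_0$-supremum (measurable by \ref{A2_0} with $A=S_0$) and declares the supremum map on $S^\infty$ to be the $S_0$-supremum on $A_{\rm sup}$ and $+\infty$ off $A_{\rm sup}$. As literally written that formula ignores the mixed case you are worried about (a sequence containing some $-\infty$'s together with $S_0$-elements lies outside $S_0^\infty$, yet its supremum need not be $+\infty$), so you have correctly isolated the one genuinely delicate point of the proposition; but your proposal stops exactly where that bookkeeping has to be carried out, and in addition misstates the fallback value, so as it stands the \ref{A2} step is not yet proved.
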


\begin{proof}
The set $\{(x,y)\in S^2\colon x\le y\}$ is the union of $\{(x,y)\in S_0^2\colon x\le y\}$, $\{-\infty\}\times S$, and $S\times\{+\infty\}$. It is therefore measurable, so~\ref{A1} holds. Next, let $A_{\rm sup}\subseteq S_0^\infty$ be the set of all sequences of elements in $S_0$ which admit a supremum in $S_0$. Condition~\ref{A2_0} implies that this set is measurable, $A_{\rm sup}\in\Scal_0^\infty$. It is easy to check that the countable supremum map $\varphi$ on $S^\infty$ is given by
\[
\varphi((x_n)_{n\in\N}) = \begin{cases} \sup_n x_n, & (x_n)_{n\in\N}\in A_{\rm sup}\\ +\infty, & (x_n)_{n\in\N}\notin A_{\rm sup}. \end{cases}
\]
It then follows from \ref{A2_0} that $\varphi$ is $(\Scal^\infty,\Scal)$-measurable. The countable infimum map on $S^\infty$ is similarly shown to be measurable. This proves \ref{A2_0}. Finally, by replacing $\phi_0$ by $\frac{2}{\pi}\arctan(\phi_0)$ if necessary, we may assume that $\phi_0$ takes values in the interval $[-1,1]$. The map $\phi:S\to\R$ defined by $\phi(x)=\phi_0(x)$ for $x\in S_0$, $\phi(-\infty)=-2$, and $\phi(+\infty)=+2$, is then a strictly increasing measurable map. Thus~\ref{A3} holds.
\end{proof}

\bibliographystyle{plainnat}
\bibliography{bibl}

\end{document}